\newtheorem{theorem}{Theorem}[section]
\newtheorem{lemm}[theorem]{Lemma}
\newtheorem{prop}[theorem]{Proposition}
\newtheorem{theo}[theorem]{Theorem}
\theoremstyle{definition}
\newtheorem{defi}[theorem]{Definition}
\newtheorem{coro}[theorem]{Corollary}
\theoremstyle{remark}
\newtheorem{remark}[theorem]{Remark}
\numberwithin{equation}{section}
\renewcommand{\a}{\alpha}
\newcommand{\ch}{\mbox{ch}}
\newcommand{\Cs}{\mathbb C^{\times} }
\newcommand{\FGC}{{\mathcal F}_{\G\times \mathbb C^{\times}\times \mathbb C^{\times}} }
\newcommand{\FGGC}{\overline{\mathcal F}_{\G\times \mathbb C^{\times}\times \mathbb C^{\times}}}
\newcommand{\G}{\Gamma}
\newcommand{\GC}{\Gamma\times \mathbb C^{\times}}
\newcommand{\Gm}{{\Gamma}_m}
\newcommand{\GCm}{{\Gamma}_m{\times}\mathbb C^{\times}{\times}\mathbb C^{\times}}
\newcommand{\Gn}{{\Gamma}_n}
\newcommand{\GCn}{{\Gamma}_n{\times}\mathbb C^{\times}{\times}\mathbb C^{\times}}
\newcommand{\g}{\gamma}
\newcommand{\hg}{\widehat{\mathfrak h}_{\G, \wt}}
\newcommand{\loopg}{\widehat{\mathfrak g}}
\newcommand{\LG}{ {\Lambda}_{\G}}
\newcommand{\ep}{\epsilon}
\newcommand{\RG}{R_{\G}}
\newcommand{\RGGC}{\overline{R}_{ \G\times \mathbb C^{\times}\times \mathbb C^{\times}}}
\newcommand{\RGC}{R_{\G{\times}\mathbb C^{\times}{\times}\mathbb C^{\times}}}
\newcommand{\Rz}{R_{\mathbb Z}(\Gamma)}%{{\mathbb Z} \G^*}
\newcommand{\Rzz}{\overline{R}_{\mathbb Z}({\G})}
\newcommand{\s}{\sigma}
\newcommand{\SG}{ S_\G }
\newcommand{\SGC}{S_{\G{\times}\mathbb C^{\times}{\times}\mathbb C^{\times}}}
\newcommand{\SGGC}{\overline{S}_{\G\times\mathbb C^{\times} }}
\newcommand{\VG}{V_{ \G}}
\newcommand{\VGC}{V_{\G\times \mathbb C^{\times}\times \mathbb C^{\times}}}
\newcommand{\wt}{\xi}
\def\om{\omega}
\begin{document}

\title[Two-parameter quantum McKay correspondence]
    {Two-parameter quantum vertex representations via finite groups and
the McKay correspondence}

%%% ----------------------------------------------------------------------
\author[Jing]{Naihuan Jing}
\address{NJ: Department of Mathematics,
   North Carolina State University,
   Ra\-leigh, NC 27695, USA}%\\
%\address{NJ: School of Sciences, South China University of Technology,
%   Guangzhou 510640, China}
\email{jing@math.ncsu.edu}

\author[Zhang]{Honglian Zhang$^{\star}$}
\address{HZ: Department of Mathematics,
Shanghai University, Shanghai 200444, China}
\email{hlzhangmath@shu.edu.cn}
\thanks{$^\star$H.Z., Corresponding Author}

%    General info
\subjclass[2000]{Primary 17B37, 17B67, 17B69, 81R10}
%\date{Version on Aug. 2, 2007}

\keywords{Two-parameter quantum toroidal algebra,  finite groups,
wreath products, McKay correspondence. }
%%%%%%%%%%%%%%%%%%%%%%%%%%%%%%%%%%%%%%%%%%%%%%%%%%%%%%%%%%%%%%%%%%%%%%%%
%\footnote{Corresponding author.}%
%%%%%%%%%%%%%%%%%%%%%%%%%%%%%%%%%%%%%%%%%%%%%%%%%%%%%%%%%%%%%%%%%%%%%%%%
\begin{abstract}
We introduce two-parameter quantum toroidal algebras of simply laced
types and provide their group theoretic realization using finite
subgroups of $SL_2(\mathbb C)$ via McKay correspondence. In
particular our construction contains a realization of the vertex
representation of the two-parameter quantum affine algebras of $ADE$
types.
\end{abstract}

\maketitle

\section{ Introduction}

In a series papers \cite{FJW1, FJW2} the basic representations of
two-toroidal Lie algebras and their quantum analogs, including
affine Lie algebras and quantum affine algebras of simply laced
types as subaglebras, were constructed from representation theory of
finite groups of $SL_2(\mathbb C)$ via the celebrated McKay
correspondence. Using purely representation theoretic data the
Frenkel-Kac \cite{FK} and Frenkel-Jing \cite{FJ} vertex
representations are constructed as a by-product of the unified group
theoretic constructions from the root lattice of the corresponding
finite dimensional Lie algebra $\mathfrak g$. In \cite{J4} we have
pointed out that such a uniformed construction incorporate not only
the toroidal Lie algebras, quantum toroidal algebras \cite{VV} (see
also \cite{J3}) but also give a general algebraic machinery to
realize other related algebraic structures.

In the current paper we provide a two-parameter quantum analog of
the toroidal Lie algebras of simply laced types using the new form
of McKay correspondence. In particular this gives a group theoretic
realization of the newly revitalized two-parameter quantum affine
algebras \cite{HRZ, Z} as distinguished subalgebras of our new
quantum toroidal algebras. As expected, our construction degenerates
to the quantum affine case by specializing $r=s^{-1}$. Through this
new construction we have also shown that the vertex representation
constructed in \cite{HZ, Z} is a natural generalization of
Frenkel-Jing construction and also reconfirm the two-parameter
generalization of the usual quantum affine algebras and their
Drinfeld realization. Our construction further shows that the
toroidal version is more suitable and natural in this picture and
reveal more symmetry in the structure of the two parameter quantum
toroidal algebras.

The new two-parameter quantum toroidal Lie algebras or double affine
algebras provide a new layer of generalization or quantization. To
show the relationships we can depict them in the following diagram
similar to that given by I. Frenkel earlier. Our new algebra adds a
new direction of generalization.

\begin{displaymath}
    \xymatrix@C=2pc@R=2pc{
    &   & \mathfrak g \ar[dl] \ar[dr] & &\\
&\widehat{\mathfrak g} \ar[dl]\ar[dr]     & & U_q({\mathfrak g})\ar[dl]\ar[dr] &\\
\widehat{\widehat{\mathfrak g}} \ar[dr]& & U_q(\widehat{\mathfrak
g}) \ar[dr]\ar[dl] & &
U_{r,s}({\mathfrak g})\ar[dl] \\
&U_q(\widehat{\widehat{\mathfrak g}})\ar[dr] & &
U_{r,s}(\widehat{\mathfrak
g}) \ar[dl]& \\
&& U_{r,s}(\widehat{\widehat{\mathfrak g}}) &&}
\end{displaymath}

It is amazing that each vertex in the diagram admits a realization
through McKay correspondence, and each level of complexity is
achieved by replacing the finite group $\Gamma$ by $\Gamma\times
\mathbb C^{\times}$ and by $\Gamma\times \mathbb C^{\times}\times
\mathbb C^{\times}$. The quantum parameter $q$ and $(r, s)$ are
respectively represented by special characters on the group $\mathbb
C^{\times}$ and $\mathbb C^{\times}\times\mathbb C^{\times}$.

At the early stage of the development of quantum groups it was
already realized there exist multi-parameter quantum groups \cite{R,
T}. Though two-parameter cases play an important role in the dual
quantum picture, the development of two-parameter quantum groups
only took a turn after more structures are revealed in series of
papers of Benkart and Witherspoon \cite{BW1, BW2, BW3} for type $A$.
Shortly after these generalizations to other types are given by
Bergeron, Gao and Hu \cite{BGH1, BGH2, BH, HS}. Recently Hu, Rosso
and one of us found the two parameter quantum affine algebras of ADE
types \cite{HRZ, Z} via their vertex representations \cite{HZ} based
on generalized Drinfeld realizations (cf. \cite{J2}), which are
two-parameter analog of the Frenkel-Jing representations \cite{FJ}.
In all these work one realizes that the two-parameter analog, like
its one-parameter case, amounts to a clever deformation of the
natural number $n$ to the two-parameter quantum number:
\begin{equation}
[n]=\frac{r^n-s^n}{r-s}=r^{n-1}+r^{n-2}s+\cdots+rs^{n-2}+s^{n-1}.
\end{equation}
Clearly when $rs=1$, the two-parameter quantum number degenerates to
the one-parameter quantum number.

As in the previous construction of McKay correspondence and quantum
toroidal algebras we recover the basic representation of $U_{r,
s}(\widehat{\mathfrak g})$ by choosing
$$\xi=\g_0\otimes ((rs^{-1})^{\frac{1}{2}}+
(r^{-1}s)^{\frac{1}{2}})-\pi\otimes 1_{\Cs{\times}\Cs},
$$
where $\gamma_0$, $1_{\Cs{\times}\Cs}$ are the trivial characters of
$\Gamma$ and $\mathbb C^{\times}\times \mathbb C^{\times}$
respectively, $r$ and $s$ are two independent natural characters of
$\mathbb C^{\times}$, and $\pi$ is the natural character of the
imbedding of $\Gamma$ in $SL_2(\mathbb C)$. The natural appearance
of the two parameter quantum toroidal algebra in this picture
intrinsically shows its importance in representation theory of two
parameter quantum affine algebras.

The paper is organized as follows. Section 2 recalls the basic
material of wreath products $\Gamma_n$ of symmetric groups
associated with any finite group and the Hopf algebra structures in
the representation ring. Section 3 studies the representation ring
$R(\Gamma_n\times\mathbb C^{\times}\times \mathbb C^{\times})$ and
weighted bilinear forms. Section 4 gives the two-parameter McKay
weights for each finite subgroup of $SL_2(\mathbb C)$. Section 5
defines the two-parameter Heisenberg algebra and realizes its
canonical representation using group theoretic data out of $\Gamma$.
Section 6 gives the Frobenius-type characteristic map between $\RGC$
and $\SGC$. Section 7 realizes the two-parameter quantum vertex
operators using irreducible characters of $\Gamma_n\times\mathbb
C^{\times}\times \mathbb C^{\times}$, and finally in Section 8 we
introduce two-parameter quantum toroidal algebras and provide their
realization via McKay correspondence, and in particular this also
provides a group theoretic realization of the basic representation
of the two-parameter quantum affine algebras.

\section{Wreath products and vertex representations} \label{sect_wreath}

\subsection{The wreath product $\Gn$}
  Let $\Gamma$ be a finite group and $n$ a non-negative integer.
The wreath product $\Gn$ is the semidirect product of the $n$-th direct product $\G^n=\G\times\cdots
\times\G$ and the symmetric group $S_n$:
$$
 \Gamma_n = \{(g, \sigma) | g=(g_1, \ldots, g_n)\in {\Gamma}^n,
\sigma\in S_n \}
$$
 with the group multiplication
$$
(g, \sigma)\cdot (h, \tau)=(g \, {\sigma} (h), \sigma \tau ) ,
$$
where $S_n$ acts on $\G^n$ by permuting the factors.

Let $\G_*$ be the set of conjugacy classes of $\G$ consisting of
$c^0=\{1\}$, $c^1$, $\dots$, $c^\ell$ and $\G^*$ be the set of
$\ell+1$ irreducible characters: $\g_0, \g_1, \dots, \g_\ell$, where
$\gamma_0$ is the trivial character of $\Gamma$. The order of the
centralizer of an element in the conjugacy class $c$ is denoted by
$\zeta_c$, so the order of the conjugacy class $c$ is
$|c|=|\G|/\zeta_c$, where $|\G|$ is the order of $\G$.

 A partition $\lambda=(\lambda_1, \lambda_2, \ldots, \lambda_\ell)$
is a decomposition of $n=|\lambda|=\lambda_1+\cdots+\lambda_l$ with
nonnegative integers: $\lambda_1\geq \dots \geq \lambda_\ell \geq
1$, where $\ell=\ell (\lambda )$ is called the {\em length} of the
partition $\lambda $ and $\lambda_i$ are called the {\em parts} of
$\lambda$. Another notation for $\lambda$ is
$$
\lambda=(1^{m_1}2^{m_2}\cdots)
$$
with $m_i$ being the multiplicity of parts equal to $i$ in
$\lambda$.
Denote by $\mathcal P$ the set of all partitions of integers and by $\mathcal P(S)$ the set of
all partition-valued functions on a set $S$.
The weight of a partition-valued function $\rho=(\rho(s))_{s\in S}$ is defined to be $\|\rho\|=\sum_{s\in S}|\rho(s)|$. We also denote
by $\mathcal P_n$ (resp. $\mathcal P_n(S)$) the subset
of $\mathcal P$ (resp. $\mathcal P(S)$) of partitions
with weight $n$.

It is well-known that the conjugacy classes of $\Gn$ are
parameterized by partition-valued functions on $\G_*$. Let $x=(g,
\s)\in {\Gamma}_n$, where $g=(g_1, \ldots, g_n) \in {\Gamma}^n$ and
$\s\in S_n$ is presented as a product of disjoint cycles. For each
cycle $(i_1 i_2 \cdots i_k)$ of $\sigma$, we define the {\em
cycle-product} element $g_{i_k} g_{i_{k -1}} \cdots g_{i_1} \in
\Gamma$, which is determined up to conjugacy in $\Gamma$ by $g$ and
the cycle. For any conjugacy class $c\in \G$ and each integer $i\geq
1$, the number of $i$-cycles in $\sigma$ whose cycle-product lies in
$c$ will be denoted by $m_i(c)$. This gives rise to a partition
$\rho(c)=(1^{m_1 (c)} 2^{m_2 (c)} \ldots )$ for $c \in \G_*$. Thus
we obtain a partition-valued function $\rho=( \rho (c))_{c \in \G_*}
\in {\mathcal P} ( \G_*)$ such that $\|\rho\|=\sum_{i, c} i
m_i(\rho(c)) =n$. This is called the {\em type} of the element $(g,
\sigma)$.  It is known \cite{M} that two elements in the same
conjugacy class have the same type and there exists a one-to-one
correspondence between the sets $(\Gn)_*$ and $\mathcal P_n(\G_*)$.
We will say that $\rho$ is the type of the conjugacy class of $\Gn$.

Given a class $c$ we denote by $c^{-1}$ the class
$\{x^{-1}| x\in c\}$.
For each $\rho\in \mathcal P(\G_*)$ we also associate the
partition-valued function
$$\overline{\rho}=(\rho(c^{-1}))_{c\in \G_*}.$$

Given a partition $\lambda = (1^{m_1} 2^{m_2} \ldots )$, we denote
by $z_{\lambda } = \prod_{i\geq 1}i^{m_i}m_i!$
 the order of the centralizer
of an element of cycle type $\lambda$ in $S_{|\lambda|}$. The order
of the centralizer of an element $x = (g, \sigma) \in {\Gamma}_n$ of
type $\rho=( \rho(c))_{ c \in \G_*}$ is given by
$$
Z_{\rho}=\prod_{c\in \G_*}z_{\rho(c)}\zeta_c^{l(\rho(c))}.
$$

\subsection{Grothendieck ring $R_{\G\times C^{\times}\times C^{\times}}$}
Let $\Rz$ be the $\mathbb Z$-lattice generated by $\g_i$, $i=0,
\dots, r$, and $R(\G)=\mathbb C\otimes\Rz$ be the space of complex
class functions on
 the group
$\G$. In the previous work on the McKay correspondence and vertex
representations \cite{W, FJW1}, the Grothendieck ring $
  \RG = \bigoplus_{n\geq 0} R({\Gamma}_n)$ was studied.
In  quantum case, the Grothendieck ring was $
  R_{\Gamma\times \mathbb C^{\times}} = \bigoplus_{n\geq 0} R({\Gamma}_n\times\mathbb
  C^{\times})$ \cite{FJW2}.
  In our two-parameter quantum case we
need to add the another ring $R(\mathbb C^{\times})$, the space of
characters of $\mathbb C^{\times} \times \mathbb
C^{\times}=\{(t_1,\,t_2)\in\mathbb C\times \mathbb C|t_1,\,t_2\neq
0\}$.

Let $r,\,s$ be the irreducible character of $\Cs$ that sends $t$ to
itself. Then $R(\mathbb C^{\times}\times \mathbb C^{\times})$ is
spanned by irreducible multiplicative characters $r^ms^n$, $m,\,n\in
\mathbb Z$, where
\[
r^m(t_1)=t_1^m, \qquad s^n(t_2)=t_2^n\qquad t_1,\,t_2\in \mathbb
C^{\times}.
\]
Thus $R(\mathbb C^{\times}\times \mathbb C^{\times})$ is identified
with the ring $\mathbb C[r^{\pm1}, s^{\pm1}]$, and we have
\[
R(\G\times \mathbb C^{\times}\times \mathbb C^{\times})=R(\G)\otimes
R(\mathbb C^{\times}\times \mathbb C^{\times}).
\]

An elements of $R(\G\times \mathbb C^{\times}\times \mathbb
C^{\times})$ can be written as a finite sum:
\[
f=\sum_if_i\otimes r^{m_i}s^{n_i}, \qquad f_i\in R(\G), m_i,\,
n_i\in\mathbb Z.
\]

We can also view $f$ as a function on $\G$ with values in the ring
of Laurent polynomials $\mathbb C[r^{\pm1}, s^{\pm1}]$. In this case
we will write $f^{r,\,s}$ to indicate the formal variables $r,\,s$,
then $f^{r,\,s}(c)=\sum_if_i(c)r^{m_i}s^{n_i}\in\mathbb C[r^{\pm1},
s^{\pm1}]$. As a function on $\G\times \mathbb C^{\times}\times
\mathbb C^{\times}$, we have $f(c, t_1,
t_2)=\sum_if_i(c)t_1^{m_i}t_2^{n_i}$.

Denote by $R_{\G\times C^{\times}\times C^{\times}}$ the following
direct sum:
\[
R_{\G\times C^{\times}\times C^{\times}}=\bigoplus_{n\geq 0}
R(\Gn\times \mathbb C^{\times}\times \mathbb C^{\times}) \simeq
R_{\G}\otimes \mathbb C[r^{\pm1}, s^{\pm1}].
\]

\subsection{Hopf algebra structure on $R_{\G\times C^{\times}\times C^{\times}}$}
       The multiplication $m$ in $\mathbb C^{\times}\times \mathbb C^{\times}$ and the diagonal map
$\mathbb C^{\times}\times \mathbb
C^{\times}\stackrel{d}{\longrightarrow}\mathbb C^{\times}\times
\mathbb C^{\times}\times \mathbb C^{\times}\times \mathbb
C^{\times}$ induce the Hopf algebra structure on $R(\mathbb
C^{\times}\times \mathbb C^{\times})$.
\begin{align}\label{E:hopf1}
m_{\mathbb C^{\times}\times C^{\times}}&: R(\mathbb C^{\times}{\times}
\mathbb C^{\times}){\otimes} R(\mathbb C^{\times}{\times} \mathbb
C^{\times})\stackrel{\cong }{\longrightarrow} R(\Cs{\times} \Cs{\times}
\Cs{\times}\Cs) %\nonumber \\
 \stackrel{d^*}{\longrightarrow} R(\Cs{\times} \Cs),\\ \label{E:hopf2}
\Delta_{\Cs \times C^{\times}}&: R(\Cs)
\stackrel{m^*}{\longrightarrow}
 R(\Cs{\times} \Cs)
 \stackrel{\cong}{\longrightarrow}
 R(\Cs) \otimes R(\Cs).
\end{align}
In terms of the basis $\{r^ms^n\}$ we have
\begin{align*}
r^{m_1}s^{n_1}\cdot r^{m_2}s^{n_2}&=r^{m_1+m_2}s^{n_1+n_2},\\
\Delta(r^ms^n)&=r^ms^n\otimes r^ms^n,
\end{align*}
where we abbreviate $\Delta_{\Cs\times \Cs}$ by $\Delta$ and follow
the convention of writing $a\cdot b=m_{\Cs\times \Cs}(a\otimes b)$.

The antipode $S_{\Cs\times \Cs}$ and the counit $\epsilon_{\Cs\times
\Cs}$ are given by
\[
S_{\Cs\times \Cs}(r^ms^n)=r^{-m}s^{-n}, \qquad \epsilon_{\Cs\times
\Cs}(r^ms^n)=\delta_{m-n,0}.
\]

We extend the Hopf algebra structures on $R(\Cs{\times}\Cs)$ and
$R_{\G}$ into a Hopf algebra structure on
$R_{\G\times \Cs\times \Cs}$ using a standard procedure in Hopf algebra \cite{A}. The
multiplication and comultiplication are given by the respective
composition of the following maps:
\begin{align}
 m &: R(\GCn ) \otimes R(\GCm)
 \stackrel{\cong }{\longrightarrow} R(\GCn \times \GCm)\nonumber\\
 &\qquad \stackrel{1\otimes m_{\Cs{\times}\Cs}}{\longrightarrow}
R( {\Gn{\times}\Gm {\times}\Cs{\times}\Cs})\stackrel{Ind\otimes 1}{\longrightarrow}
R( {\Gamma}_{n + m}{\times}\Cs{\times}\Cs);\\
\Delta&: R(\GCn ) \stackrel{Res\otimes 1}{\longrightarrow}
 \oplus_{m=0}^nR( {\Gamma}_{n - m} \times \GCm)\nonumber\\
&\qquad \stackrel{1\otimes \Delta_{\Cs{\times}\Cs}}{\longrightarrow}
\oplus_{m=0}^n R( {\Gamma}_{n - m}\times \GCm{\times}\Cs{\times}\Cs)\nonumber\\
&\qquad \stackrel{\cong }{\longrightarrow}
 \oplus_{m=0}^nR( {\Gamma}_{n - m}{\times}\Cs{\times}\Cs) \otimes R(\GCm),
\label{E:comult}
\end{align}
where we have used the identification of $R(\Cs\times \Cs)$ with
$R(\Cs)\otimes R(\Cs)$ in (\ref{E:hopf1}-\ref{E:hopf2}). Also
$Ind: R(\Gn \times \Gm ) \longrightarrow R(\G_{n +m})$
denotes the induction functor
and $Res: R(\Gn ) \longrightarrow
 R( {\G }_{n - m} \times \Gm )$ denotes the restriction functor.

The antipode is given by
\[
S(f(g, (t_1, t_2)))=f(g^{-1}, (t_1^{-1}, t_2^{-1})), \qquad g\in \G, t\in \Cs{\times}\Cs.
\]
In particular, $S(\g)(c)=\g(c^{-1})$ for
$\g\in\G^*$. As we mentioned earlier, we may write
$f\in \RGC$ as
$$f^{r,\,s}(g)=\sum_if_i(g)r^{m_i}s^{n_i},$$
Then $S(f^{r,\,s})(g)=\sum_if_i(g^{-1})r^{-m_i}s^{-n_i}$.

The counit $\epsilon$ is defined by
\[
\epsilon(R(\GCn ))=0, \qquad\mbox{if \ \ } n\neq 0,
\]
and $\epsilon$ on $R(\Cs{\times}\Cs)$ is the counit of the Hopf algebra
$R(\Cs{\times}\Cs)$.

\section{A weighted bilinear form on $R(\GCn)$}
\label{sect_weight}

\subsection{A standard bilinear form on $\RGC$}
Let $f, g\in \RGC$ with $f=\sum_i f_i\otimes r^{m_i}s^{n_i}$ and
$g=\sum_i g_i\otimes r^{k_i}s^{l_i}$.
The $\mathbb C[r^{\pm 1}, s^{\pm 1}]$-valued standard $\mathbb C$-bilinear form
on $\RGC$ is defined as
\begin{align*}
\langle f, g \rangle_{\G}^{r\, s}
&= \sum_{i,j} \langle f_i, g_j\rangle_{\G}r^{m_i-k_j}s^{n_i-l_j}\\
&= \sum_{i,j}\sum_{c\in \G_*}\zeta_c^{-1}f_i(c)g_j(c^{-1})r^{m_i-k_j}s^{n_i-l_j},
\end{align*}
where we recall that
$c^{ -1}$ denotes the conjugacy class
$\{ x^{ -1}| x \in c \}$ of $\G$, and $\zeta_{c}$ is the order of the
centralizer of the class $c$ in $\G$. Sometimes we will also view the bilinear form as a function
of $(t_1, t_2)\in\Cs{\times}\Cs$:
\begin{equation*}
\langle f, g \rangle_{\G}^{r,\, s}(t_1, t_2) = \sum_{c \in \Gamma_*}
\zeta_c^{ -1} f(c, (t_1, t_2)) S(g(c, (t_1, t_2))).
\end{equation*}

The following is a direct consequence of the orthogonality
of irreducible characters of $\G$.
\begin{eqnarray}
  \langle \g_i\otimes r^m s^{n}, \g_j \otimes r^k s^{l}\rangle_{\G}^{r,\,s} &= & \delta_{ij}
r^{m-k}s^{n-l} , \nonumber \\
  \sum_{ \g \in \G^*} \g (c ') S(\g)( c)
    &= & \delta_{c, c '} \zeta_c, \quad c, c ' \in \G_*.  \label{eq_orth}
\end{eqnarray}

Let $\langle \ \ , \ \ \rangle_{\Gn}^{r,\, s}$ be the $\mathbb
C[r^{\pm 1}, s^{\pm 1}]$-valued bilinear form on $R(\GCn)$. The
$\mathbb C[r^{\pm 1}, s^{\pm 1}]$-valued standard bilinear form in
$\RGC$ is defined in terms of the bilinear form on $R( \GCn )$ as
follows:
\[
\langle u, v \rangle^{r,\, s}
 = \sum_{ n \geq 0} \langle u_n, v_n \rangle_{\Gn}^{r,\, s},
\]
where
$u = \sum_n u_n$ and $v = \sum_n v_n$ with $u_n, v_n\in R(\Gn{\times}\Cs{\times}\Cs)$.

\subsection{A weighted bilinear form on $\RGC$}
A class function $\wt \in\RGC$ is called {\t self-dual} if for all
$x\in\G, (t_1, t_2)\in\Cs{\times} \Cs$
\[
\xi(x, (t_1, t_2))=S(\xi(x, (t_1, t_2))),
\]
or equivalently $\xi^{r,\, s}(x)=\xi^{r^{-1},\, s^{-1}}(x^{-1})$.

We fix a self-dual class function $\xi$.
The tensor product of two representations $\g$ and $\beta$ in $ \RGC)$
will be denoted by $\g*\beta $.

Let $a_{ij} \in \mathbb C[r^{\pm 1}, s^{\pm 1}]$ be the (virtual) multiplicity
of $\g_j$ in $ \wt * \g_i $, i.e.,
\begin{eqnarray}  \label{eq_tens}
 \wt  *  \g_i
  = \sum_{j =0}^r a_{ij} \g_j.
\end{eqnarray}
We denote by $A^{r,\, s}$ the $ n \times n$ matrix $ ( a_{ij})_{0
\leq i,j \leq n-1}$.

Associated to $\xi$ we introduce the following weighted bilinear form
$$
  \langle f, g \rangle_{\wt}^{r,\, s} = \langle \wt * f ,  g \rangle_{\G}^{r,\, s},
   \quad f, g \in \RGC.
$$
where we use the superscript $r \, s$ to indicate the $r, s$-dependence.
The superscript $r \, s$ is often omitted if
the $r, s$-variable in characters $f$ and $g$ is clear from the context.
The explicit formula of the bilinear form is given as follows.
\begin{eqnarray}
  \langle f, g \rangle_{\wt}^{r,\,s}
   &=&\frac 1{ |\G|} \sum_{x\in \G}\wt^{r,\,s}(x)f^{r,\,s}(x)g^{r^{-1},\,s^{-1}}(x^{-1})
\nonumber\\
& =& \sum_{c \in \G_*} \zeta_c^{ -1} \wt^{r,\,s}(c) f^{r,\,s}(c)
g^{r^{-1},\,s^{-1}}(c^{ -1}),
     \label{eq_twist}
\end{eqnarray}
which is the  average of
the character $ \wt * f * \overline{g}$ over $\G$.

The self-duality of $\wt$ together with (\ref{eq_twist})
implies that
$$
 a_{ij} = \overline{a_{ji}},
$$
i.e. $A^{r\, s}$ is a hermitian-like matrix with the bar action
given by $\overline{r}=s, \overline{s}=r$.

The orthogonality (\ref{eq_orth}) implies that
\begin{equation}\label{E:qcartan}
a_{ij}=\langle \g_i, \g_j\rangle_{\xi}^{r,\,s}.
\end{equation}

\begin{remark}
 If $\wt$ is the trivial character $\g_0$, then the
 weighted bilinear form becomes the standard one on $ \RGC$.
\end{remark}

\subsection{A weighted bilinear form on $R( \GCn)$}

Let $V$ be a $\G{\times}\Cs{\times}\Cs$-module which affords a character $\g$ in $\RGC$.
We can decompose $V$ as follows:
\begin{equation*}
V=\bigoplus_iV_i\otimes \mathbb C({k_i}, l_i),
\end{equation*}
where $V_i$ is a (virtual) $\G$-module in $R(\G)$ and
$\mathbb C(k_i, l_i)$ is the one dimensional $\Cs{\times}\Cs$-module
afforded by the character $r^{k_i}s^{l_i}$.

The $n$-th outer tensor product
$V^{ \otimes  n} $ of $V$ can be regarded naturally as
a representation of the wreath product
$(\G{\times}\Cs{\times}\Cs)_n$ via permutation of the factors
and the usual direct product action. More precisely, note that $\G{\times}\Cs{\times}\Cs$
can be viewed as a subgroup of $(\G{\times}\Cs{\times}\Cs)_n$ by the diagonal inclusion from
$\Cs{\times}\Cs$ to $(\Cs{\times}\Cs)^n$:
\[
\Gn\times\Cs{\times}\Cs\longrightarrow ({\G}^n\times{\Cs{\times}\Cs}^n)\rtimes S_n=
(\G\times\Cs{\times}\Cs)_n.
\]
This provides a
 natural $\Gn\times\Cs{\times}\Cs$-module structure on $V^{\otimes n}$. We
denote its character by $\eta_n ( \g )$. Explicitly we have
\begin{equation}\label{E:eta-action}
(g, \sigma, (t_1, t_2)).(v_1\otimes\cdots\otimes v_n)=
(g_{1}, (t_1, t_2))v_{\sigma^{-1}(1)}\otimes\cdots
\otimes (g_{n}, (t_1, t_2))v_{\sigma^{-1}(n)},
\end{equation}
where $g=(g_1, \ldots, g_n)\in \G^n$.

Let $\varepsilon_n$ be the (1-dimensional) sign representation
of $\Gn$ so that $\G^n$ acts trivially while
letting $S_n$ act as a sign representation.
We denote by $\varepsilon_n ( \g ) \in R(\GCn)$
the character of the tensor product
of $\varepsilon_n\otimes 1$ and $V^{\otimes n}$.

The weighted bilinear form on $R( \GCn)$ is now defined by
$$
  \langle  f, g\rangle_{\wt, \Gn }^{r,\,s} =
   \langle \eta_n (\wt ) * f, g \rangle_{\Gn}^{r,\,s} ,
   \quad f, g \in R( \GCn).
$$
We shall see in Corollary~\ref{cor_char} that $\eta_n (\wt)$ is
self-dual if the class function $\wt$ is invariant under the
antipode $S$. In such a case the matrix of the bilinear form
$\langle \ , \ \rangle_{\wt}^{r,\,s}$ is equal to its adjoint
(transpose and bar action).

We can naturally extend   $\eta_n$ to a map from $R(\G)\otimes r^m
s^n$ to $\RGC$ as follows. In particular, if $\beta$ and $\g $ are
characters of representations $V$ and $W$ of $\G$ respectively, then
\begin{align}  \nonumber
&\eta_n (\beta\otimes r^ms^n +\g\otimes r^ks^l) \\      \label{eq_virt}
  =\sum_{m =0}^n &Ind_{\G_{n -m}{\times}\Cs{\times}\Cs{\times}\Gm{\times}\Cs{\times}\Cs }^{\GCn}
   [ \eta_{n -m} (\beta\otimes r^ms^n) \otimes \eta_m (\g\otimes r^ks^l) ],\\
  &\eta_n (\beta\otimes r^ms^n - \g\otimes r^ks^l) \nonumber\\
  =\sum_{m =0}^n &( -1)^m Ind_{\G_{n -m}{\times}\Cs{\times}\Cs{\times}\Gm{\times}\Cs{\times}\Cs }^{\GCn}
   [ \eta_{n -m} (\beta\otimes  r^ms^n) \otimes \varepsilon_m (\g\otimes r^ks^l) ] .  \label{eq_virt'}
\end{align}

On $\RGC = \bigoplus_{n} R(\GCn)$ the weighted
bilinear form is given by
\[
\langle u, v \rangle_{\wt}^{r,\, s}
 = \sum_{ n \geq 0} \langle u_n, v_n \rangle_{\wt, \Gn }^{r\, s}
\]
where
$u = \sum_n u_n$ and $v = \sum_n v_n$ with $u_n, v_n\in R(\Gn{\times}\Cs{\times}\Cs)$.

 The bilinear form $\langle \ , \ \rangle_{\wt}^{r,\, s}$ on
$\RGC$
is $\mathbb C$-bilinear and takes values in $\mathbb C[r^{\pm1}, s^{\pm1}]$.
When $n =1$, it reduces
 to the weighted bilinear form defined on $\RGC$.

We will often omit the superscript $r\, s$ and use the notation $\langle \ , \ \rangle_{\xi}$
for the weighted bilinear form on $\RGC$.

\section{Two-parameter quantum McKay weights}\label{sect_mckay}

\subsection{Two-parameter quantum McKay correspondence}
Let $d_i = \g_i (c^0)$ be the dimension of
the irreducible representation of $\G$
corresponding to the character $\g_i$.

The following generalizes a result of McKay \cite{Mc}.

\begin{prop} \label{P:qMc}
    For each class $c\in \Gamma_*$ the column vector
 $$
   v(c)= ( \g_0 (c), \g_1 (c), \ldots, \g_{n-1} (c) )^t
   $$
 is an eigenvector of the $n\times n$-matrix
 $A^{r\, s}=(\langle\g_i, \g_j\rangle_{\xi}^{r,\, s})$
 with eigenvalue $ \wt^{r,\, s} (c)$.
 In particular $(d_0, d_1, \ldots, d_{n-1})$ is an eigenvector
 of $A^{r ,\, s}$ with eigenvalue $\wt^{r,\, s}(c^0)$.
\end{prop}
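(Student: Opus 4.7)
The plan is to unwrap the definition of $A^{r,s}$ and evaluate the tensor product identity \eqref{eq_tens} pointwise on conjugacy classes. By the orthogonality relation \eqref{E:qcartan}, the entry $a_{ij}$ equals $\langle \gamma_i, \gamma_j\rangle_{\xi}^{r,s}$, so by \eqref{eq_tens} we have
\[
\xi * \gamma_i \;=\; \sum_{j=0}^{n-1} a_{ij}\,\gamma_j
\]
as an identity in $R(\Gamma\times\Cs\times\Cs)$.

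First I would evaluate this identity on a fixed conjugacy class $c\in\Gamma_*$. Since $*$ denotes the tensor product of representations, the character of $\xi*\gamma_i$ is the pointwise product of the characters, i.e.\ $(\xi*\gamma_i)(c)=\xi^{r,s}(c)\,\gamma_i(c)$. Hence evaluation yields
\[
\xi^{r,s}(c)\,\gamma_i(c) \;=\; \sum_{j=0}^{n-1} a_{ij}\,\gamma_j(c)
\qquad\text{for each } i=0,1,\ldots,n-1.
\]
The right-hand side is precisely the $i$-th entry of $A^{r,s}v(c)$, and the left-hand side is the $i$-th entry of $\xi^{r,s}(c)\,v(c)$. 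Reading the equalities as a matrix identity gives $A^{r,s}\,v(c)=\xi^{r,s}(c)\,v(c)$, which is the eigenvector assertion.

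For the last sentence, specialize to $c=c^{0}=\{1\}$. By definition $\gamma_i(c^{0})=d_i$, so the column vector $v(c^{0})=(d_0,d_1,\ldots,d_{n-1})^{t}$, and the associated eigenvalue is $\xi^{r,s}(c^{0})$, giving the stated distinguished eigenvector.

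There is essentially no hard step here; the proof is a direct unpacking of \eqref{eq_tens} and \eqref{E:qcartan} together with the multiplicativity of characters under tensor product. The only thing to keep track of is that both sides of the identity live in the ring $\mathbb{C}[r^{\pm1},s^{\pm1}]$ of Laurent polynomials, so the eigenvalue equation is an identity of $\mathbb{C}[r^{\pm1},s^{\pm1}]$-valued column vectors rather than of complex scalars; this causes no difficulty since the formula $(\xi*\gamma_i)(c)=\xi^{r,s}(c)\gamma_i(c)$ holds verbatim in this extended coefficient ring.
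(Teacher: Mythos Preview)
Your proof is correct. It differs slightly in presentation from the paper's own argument: you invoke the defining expansion \eqref{eq_tens} and evaluate it at a class $c$, using that the character of a tensor product is the pointwise product of characters, which yields $\xi^{r,s}(c)\gamma_i(c)=\sum_j a_{ij}\gamma_j(c)$ in one step. The paper instead starts from the other side, expanding $\sum_k\langle\gamma_i,\gamma_k\rangle_\xi^{r,s}\gamma_k(c)$ via the explicit integral formula \eqref{eq_twist} for the weighted bilinear form and then collapsing the double sum with the second orthogonality relation in \eqref{eq_orth}. The two routes are logically equivalent and equally short; yours is a bit more conceptual (tensor-product multiplicativity does the work), while the paper's makes the role of character orthogonality explicit.
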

\begin{proof} We compute directly that
\begin{align*}
\sum_{k=0}^r\langle\g_i, \g_k\rangle_{\xi}^{r,\,
s}\g_k(c)&=\sum_k\sum_{c'\in\Gamma_*}
\zeta_{c'}^{-1}\xi^{r,\, s}(c')\g_i(c')\g_k({c'}^{-1})\g_k(c)\\
&=\sum_{c'\in\Gamma_*}\zeta_{c'}^{-1}\xi^{r,\, s}(c')\g_i(c')\sum_k
\g_k({c'}^{-1})\g_k(c)\\
&=\sum_{c'\in\Gamma_*}\zeta_{c'}^{-1}\xi^{r,\, s}(c')\g_i(c')\zeta_c\delta_{cc'}\\
&=\xi^{r,\, s}(c)\g_i(c).
\end{align*}
\end{proof}

  Let $\pi$ be an irreducible faithful
representation $\pi$ of $\G$ of
dimension $d$. For each integer $n$ we
define the $r, s$-integer $[n]$ that can be viewed as a
character of $\Cs{\times}\Cs$ by
\begin{equation}\label{E:qnum}
[n]=\frac{r^n-s^{n}}{r-s}=r^{n-1}+r^{n-2}s+\cdots +rs^{n-2}+s^{n-1}.
\end{equation}
We take the following
special class function
\begin{equation}\label{E:weight}
  \wt=\g_0\otimes [d](rs)^{-\frac{d}{4}} - \pi\otimes 1_{\Cs{\times}\Cs},
\end{equation}
where we have also used the symbol $\pi$ for the corresponding
character, and $1_{\Cs{\times}\Cs}=r^0s^0$ is the trivial character
of $\Cs{\times}\Cs$.

Similar to one-parameter quantum case, we have the following
fact(cf. \cite{FJW2}).
\begin{prop} \label{P:nondeg}
The weighted bilinear form associated to
(\ref{E:weight}) is
non-degenerate. If $\pi$ is an embedding of $\G$
into $SU_d$ and
$t\neq 1$ is a nonnegative real number,
then the weighted bilinear form evaluated on $t$ is positive definite.
\end{prop}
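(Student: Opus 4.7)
The plan is to exploit Proposition~\ref{P:qMc}, which already provides a complete spectral decomposition of $A^{r,s}$. The column vectors $v(c) = (\gamma_0(c), \ldots, \gamma_{n-1}(c))^t$ indexed by the conjugacy classes $c \in \Gamma_*$ are eigenvectors of $A^{r,s}$ with corresponding eigenvalues $\xi^{r,s}(c)$. Since the matrix $T$ assembled from these columns is (the transpose of) the character table and is invertible by the second orthogonality relation, I obtain $A^{r,s} = T\,D\,T^{-1}$ with $D = \mathrm{diag}(\xi^{r,s}(c))_{c \in \Gamma_*}$. Both claims in the proposition are thus reduced to a pointwise analysis of the spectrum $\{\xi^{r,s}(c)\}_{c \in \Gamma_*}$.

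For non-degeneracy I would simply show that each
\[
\xi^{r,s}(c) \;=\; [d]\,(rs)^{-d/4} \;-\; \pi(c)
\]
is non-zero in the relevant Laurent ring $\mathbb C[r^{\pm 1/4}, s^{\pm 1/4}]$. The first summand is a sum of $d \geq 2$ distinct monomials with pairwise different bidegrees in $(r,s)$, while $\pi(c)$ is a scalar; a non-constant Laurent polynomial cannot equal a scalar, so $\xi^{r,s}(c) \neq 0$ and therefore $\det A^{r,s} = \det D \neq 0$.

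For positive definiteness at $t$, I would specialize the variables so that $[d](rs)^{-d/4}$ becomes the symmetric $t$-integer $[d]_t = t^{(d-1)/2} + t^{(d-3)/2} + \cdots + t^{-(d-1)/2}$. Applying AM--GM to these $d$ positive terms (whose geometric mean equals $1$) yields $[d]_t \geq d$, with equality only at $t = 1$. Since $\pi$ embeds $\Gamma$ into $SU_d$, every character value $\pi(c)$ is a sum of $d$ complex numbers of unit modulus, so $|\pi(c)| \leq \pi(c^0) = d$; in the McKay setting $d = 2$ this moreover forces $\pi(c) \in [-2, 2] \subset \mathbb R$. Each specialized eigenvalue $\xi^{r,s}(c)|_t$ is then a strictly positive real number whenever $t \neq 1$, and combining this with the diagonalization $A^{r,s} = T D T^{-1}$ will yield positive definiteness.

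The main technical hurdle will be verifying that the conjugation by $T$ really acts as a \emph{unitary} change of basis with respect to the standard Hermitian pairing on class functions, so that positivity of the diagonal entries of $D$ translates into positive definiteness of $A$ in the classical sense (rather than mere diagonalizability). This follows from the self-duality of $\xi$ combined with the first orthogonality relation $\sum_{\gamma \in \Gamma^*} \gamma(c)\,\overline{\gamma(c')} = \delta_{c,c'}\,\zeta_c$, which exhibits the normalized matrix $(\zeta_c^{-1/2}\,\gamma_i(c))_{i,c}$ as unitary and thereby lets the strict positivity of the spectrum propagate to the Gram matrix itself.
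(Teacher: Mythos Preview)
The paper does not actually prove this proposition: it simply says ``Similar to one-parameter quantum case, we have the following fact (cf.\ [FJW2])'' and moves on. Your approach---diagonalizing $A^{r,s}$ via Proposition~\ref{P:qMc}, using the invertibility (and, after rescaling, unitarity) of the character-table matrix, and then analyzing the eigenvalues $\xi^{r,s}(c)$ pointwise---is exactly the standard argument and is what the cited reference [FJW2] carries out in the one-parameter setting. So you are not diverging from the paper; you are supplying the details it chose to suppress.

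Two small comments. First, your computation $A = T D T^{-1}$ together with the first orthogonality relation $T\Lambda^{-1}T^* = I$ (where $\Lambda = \mathrm{diag}(\zeta_c)$) gives $A = T(D\Lambda^{-1})T^*$; since $D\Lambda^{-1}$ is diagonal with positive real entries once each $\xi(c)$ is shown to be real and positive, this is precisely the congruence you need for positive definiteness, and your last paragraph handles it correctly. Second, you are right to flag the reality of $\pi(c)$ as the crux of the positive-definiteness claim: for $\Gamma \subset SU_2$ the defining character is automatically real (eigenvalues come in conjugate pairs, so $\pi(c) = 2\cos\theta$), which is the only case the paper actually uses. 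For general $SU_d$ with $d>2$ the character $\pi(c)$ need not be real, so the literal statement for arbitrary $d$ would require an extra self-duality hypothesis on $\pi$; this is a looseness in the paper's statement rather than a gap in your argument.
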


\begin{remark} The matrix $A^{1,\,1}$ is integral, and
the entries of $A^{r,\, s}$ are the $r,\, s$-numbers of the
corresponding entries in $A^{1,\, 1}$ when $r\geq 2$.
\end{remark}

\subsection{Two quantum McKay weights} \label{S:Mcweights}
Let $\G$ is a finite subgroup of $SU_2$ and we introduce
the first distinguished self-dual class function
$$\xi=\g_0\otimes ((rs^{-1})^{\frac{1}{2}}+(r^{-1}s)^{\frac{1}{2}})-\pi\otimes 1_{\Cs{\times}\Cs},
$$
where $\pi$ is the character of the embedding of $\G$ in $SU_2$.

The matrix of the
weighted bilinear form $\langle \ , \ \rangle_{\xi}$ (cf. (\ref{E:qcartan}))
has the following entries:
\begin{equation} \label{E:qcartan1}
a_{ij}=\begin{cases} (rs^{-1})^{\frac{1}{2}}+(r^{-1}s)^{\frac{1}{2}}, & \mbox{if } i=j,\\
-1, & \mbox{if $\langle \g_i, \g_j\rangle_{\xi}^1=-1$,}
\end{cases}
\end{equation}

In particular when $r=s=1$ the matrix $(a_{ij}^{1,\,1})$ coincides
with the extended Cartan matrix of ADE type according to the five
classes of finite subgroups of $SU_2$: the cyclic, binary dihedral,
tetrahedral, octahedral, and icosahedral groups. McKay \cite{Mc}
gave a direct correspondence between a finite subgroup of $SU_2$ and
the affine Dynkin diagram $D$ of ADE type. Each irreducible
character $\g_i$ corresponds to  a vertex of $D$, and the number of
edges between $\g_i$ and $\g_j$ ($i\neq j)$ is equal to $|\langle
\g_i, \g_j\rangle_{\xi}^{1,\, 1}|$, where $\langle \g_i,
\g_j\rangle_{\xi}^{1,\, 1}=a_{ij}^1$ are the entries of matrix
$A^{1\, 1}$ of the weighted bilinear form $\langle \ , \
\rangle_{\xi}^{1,\, 1}$. For this reason we will call our matrix
$A^{r,\, s}=(a_{ij})=(\langle\g_i, \g_j\rangle_{\xi}^{r,\,s})$ the
quantum Cartan matrix. %%

\section{Two-parameter quantum Heisenberg algebras and $\Gn$} \label{sect_heis}
\subsection{Two-parameter Heisenberg algebra $\hg $}
 Let $\hg $ be
the infinite dimensional Heisenberg algebra over $\mathbb C[r^{\pm
1}, s^{\pm1}]$, associated with $\Gamma$ and $\wt\in \RGC$, with
generators $a_m(c), c\in\G_*, m\in \mathbb Z$ and a central element
$C$ subject to the following commutation relations:
\begin{equation}  \label{eq_heis}
[a_m(c^{-1}), a_n(c')]
 = m \delta_{m, -n} \delta_{c, c'}\zeta_{c}\xi_{r^m,\,s^{m}}(c)C,
 \quad c, c ' \in \G_*.
\end{equation}

For $m\in \mathbb Z, \g\in \G^*$ and $k,\, l\in \mathbb Z$ we define
\begin{equation*}
  a_m( \g\otimes r^k s^l )
    = \sum_{c \in \G_*} \zeta_c^{ -1}
  {\g } (c) a_m(c)r^{mk}s^{ml}
\end{equation*}
and then extend it to $\RGC$ linearly over $\mathbb C$. Thus
we have for $\g \in \RGC$
\begin{equation}\label{eq_real}
  a_m(\g)
    = \sum_{c \in \G_*} \zeta_c^{ -1}
  {\g_{r^m,\,s^{m}} } (c) a_m(c).
\end{equation}
In particular we have $a_m(\g\otimes r^ks^l)=a_m(\g)r^{mk}s^{ml}$.

It follows immediately from the orthogonality (\ref{eq_orth}) of the irreducible
characters of $\G $ that for each $c\in \G_*$
$$
 a_{ m}( c) = \sum_{ \g\in \G^*} S(\gamma(c)) a_m( \g ).
$$
Note that this formula is also valid if the summation runs through
$\G^*\otimes r^ks^l$ with a fixed $k$ and $l$.

\begin{prop}  \label{prop_orth} The Heisenberg
 algebra $\hg$ has a new basis given by
$a_{n}(\g )$ and $C$ ($ n\in \mathbb Z, \g \in \G^*$) over $\mathbb
C[r^{\pm1}, s^{\pm1}]$
 with the following relations:
 \begin{equation} \label{E:heisen}
  [ a_m(\g), {a_n( \g' )}]=m\delta_{m, -n}\langle\g, \g'
\rangle_{\xi}^{r^m ,\, s^{m}}C.
 \end{equation}
\end{prop}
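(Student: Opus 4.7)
\smallskip
\noindent\textbf{Proof plan.} The statement has two parts: that the $a_n(\gamma)$ together with $C$ give a $\mathbb C[r^{\pm 1},s^{\pm 1}]$-basis of $\hg$, and that they satisfy the claimed commutation relation. For the basis claim, my plan is to exploit the second orthogonality relation in (\ref{eq_orth}). Starting from the definition (\ref{eq_real}), the transition matrix from $\{a_m(c)\}_{c\in\G_*}$ to $\{a_m(\g)\}_{\g\in\G^*}$ is (up to the factors $\zeta_c^{-1}$ and the scalar $r^{mk}s^{ml}$ which are units) the character table of $\G$, which is invertible over $\mathbb C$. In fact the inverse is already displayed just before the proposition, namely $a_m(c)=\sum_{\g\in\G^*}S(\g(c))\,a_m(\g)$. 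Since $\{a_m(c)\}_{c,m}\cup\{C\}$ is given to be a basis, so is $\{a_m(\g)\}_{\g,m}\cup\{C\}$.

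For the commutation relation, I would substitute the definition (\ref{eq_real}) into both slots of $[a_m(\g),a_n(\g')]$, pull out scalars by $\mathbb C[r^{\pm 1},s^{\pm 1}]$-bilinearity, and invoke the defining relation (\ref{eq_heis}). Writing the given relation with the substitution $c\mapsto c^{-1}$ yields
\[
[a_m(c),a_n(c')]=m\,\delta_{m,-n}\,\delta_{c^{-1},c'}\,\zeta_c\,\xi^{r^m,s^m}(c^{-1})\,C,
\]
so after collapsing one of the two class sums via the Kronecker delta, the commutator becomes
\[
m\,\delta_{m,-n}\,C\sum_{c\in\G_*}\zeta_c^{-1}\,\xi^{r^m,s^m}(c^{-1})\,\g^{r^m,s^m}(c)\,\g'^{r^{-m},s^{-m}}(c^{-1}).
\]

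The key remaining step is to identify this sum with $\langle\g,\g'\rangle_{\xi}^{r^m,s^m}$ as defined in (\ref{eq_twist}). Here I would apply the self-duality hypothesis on $\xi$ in the form $\xi^{r,s}(x)=\xi^{r^{-1},s^{-1}}(x^{-1})$, evaluated at $r^m,s^m$, to rewrite $\xi^{r^m,s^m}(c^{-1})$ in the form that matches the weighted bilinear form; equivalently, one re-indexes the sum by $c\mapsto c^{-1}$ and uses $\zeta_{c^{-1}}=\zeta_c$ together with the fact that $c\mapsto c^{-1}$ is a bijection on $\G_*$, bringing everything into the exact shape of (\ref{eq_twist}).

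The main obstacle is bookkeeping: making sure the $r^{\pm m}$, $s^{\pm m}$ exponents, the direction of the class inversion $c\leftrightarrow c^{-1}$, and the placement of $\xi$ line up consistently with the conventions in (\ref{eq_heis}), (\ref{eq_real}) and the definition of $\langle\ ,\ \rangle_{\xi}^{r,s}$. Once the self-duality of $\xi$ is used to absorb the inversion on the argument of $\xi$, the resulting expression is precisely $m\,\delta_{m,-n}\langle\g,\g'\rangle_{\xi}^{r^m,s^m}\,C$, which completes the proof.
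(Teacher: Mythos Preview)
Your proposal is correct and follows essentially the same route as the paper: expand $a_m(\gamma)$ and $a_n(\gamma')$ via (\ref{eq_real}), apply the defining relation (\ref{eq_heis}) to collapse the double sum over $\Gamma_*$, and identify the resulting single sum with the weighted bilinear form (\ref{eq_twist}). The paper's proof is in fact terser than yours: it does not separately discuss the basis assertion (relying implicitly on the inverse formula $a_m(c)=\sum_{\gamma}S(\gamma(c))a_m(\gamma)$ stated just before the proposition), and it passes from the sum with $\xi$ evaluated on the inverted class directly to $\langle\gamma,\gamma'\rangle_{\xi}^{r^m,s^m}$ without spelling out the re-indexing $c\mapsto c^{-1}$ or the self-duality step that you rightly flag as the only delicate bookkeeping point.
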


\begin{proof} This is proved by a direct computation using
Eqns. (\ref{eq_heis}),
 (\ref{eq_twist}) and (\ref{eq_orth}).
 \begin{eqnarray*}
  [ a_m(\g), a_n( \g')]
  & =& \sum_{c, c'\in \G_*} \zeta_c^{-1}\zeta_{c'}^{-1}
{ \g  } (c)\g'(c')
       [ a_m( c), a_n ({c'})]          \\
& =& m \delta_{m, -n}
         \sum_{c, c'\in \G_*}\zeta_{c}^{-1}\zeta_{c'}^{-1}
{ \g  } (c)\g'(c') \delta_{c^{-1}, c'}\zeta_c\xi_{r^m,\,s^{m}}(c)C    \\
  & =& m \delta_{m, -n}
         \sum_{c\in \G_*}\zeta_{c}^{-1}
{ \g  } (c)\g'(c^{-1}) \xi_{r^m,\,s^{m}}(c)C   \\
  & =& m \delta_{m, -n} \langle\g, \g'\rangle_{\xi}^{r^m,\,s^{m}}C.
\end{eqnarray*}
\end{proof}

\subsection{Action of $\hg$ on the Space $\SGC$}
 Let $\SGC $ be the symmetric algebra
generated by $a_{-n}(\g), n \in \mathbb N,
\g\in \Gamma_*$ over $\mathbb C[r^{\pm 1}, s^{\pm1}]$.
We define $a_{-n}(\g\otimes r^ks^l)=a_{-n}(\g)r^{-kn}s^{-ln}$
and the natural degree operator on the space $\SGC $ by
$$
  \deg (a_{ -n}( \g\otimes r^ks^l)) = n
$$
which makes $\SGC $ into a $\mathbb Z_+$-graded algebra.

The space $\SGC$ affords a natural realization of the
Heisenberg algebra $\hg$ with $C=1$.
Since $a_{-n}(\g\otimes r^ks^l)=r^{-nk}s^{-nl}a_{-n}(\g)$, it is enough to describe
the action for $a_{-n}(\g)$.
The central element $C$ acts as the identity operator.
For $n>0$, $a_{-n}( \g)$ act as
multiplication operators on $ \SGC $.
The element $a_n (\g), n \geq 0$ acts as a differential operator through
contraction:
\begin{eqnarray*}
 & a_n (\g). a_{-n_1}( \alpha_1) a_{-n_2} (\alpha_2)
    \ldots a_{-n_k}( \alpha_k)   \\
 &= \sum_{i =1}^k
 \langle \g , \alpha_i \rangle_{\wt}^{r^n,\,s^n}
   a_{-n_1}( \alpha_1) a_{-n_2}(\alpha_2) \ldots
  \check{a}_{-n_i}( \alpha_i) \ldots a_{-n_k}(\alpha_k )  .
\end{eqnarray*}
Here $n_i > 0, \alpha_i \in R(\G)$ for $i =1, \ldots , k$,
and $\check{a}_{-n_i}( \alpha_i)$ means the very term
is deleted.
In this case  $\SGC$ is an irreducible
representation of $\hg$ with the unit $1$ as the highest weight vector.

\subsection{The bilinear form on $\SGC $}
       As a $\hg$-module, the space $ \SGC $ admits a bilinear form
$\langle \ ,  \ \rangle_{\wt } '$ over $\mathbb C[r^{\pm 1}, s^{\pm1}]$ characterized by
\begin{align} \nonumber
\langle 1, 1\rangle_{\xi}'&=1,\\   \label{eq_form}
\langle au, v\rangle_{\xi}'&=\langle u, a^*v\rangle_{\xi}', \qquad a\in \hg,
\end{align}
with the adjoint map $*$ on $\hg$ given by
\begin{equation}  \label{eq_hermit}
a_n(\g\otimes r^ks^l)^* = a_{-n}(\g\otimes r^{k}s^l), \qquad n\in
\mathbb Z.
\end{equation}
Note that the adjoint map $*$ is a $\mathbb C$-linear
anti-homomorphism of $\hg$, and $r^*=\overline r, s^*=\overline s$.
We still use the same symbol $*$ to denote the hermitian-like dual,
since it clearly generalizes the $*$-action on the deformed Cartan
matrix (\ref{E:2varCartan}).

  For any partition $\lambda=( \lambda_1, \lambda_2, \dots)$ and $\g \in \G^*$,
we define
$$
  a_{-\lambda}( \g) = a_{-\lambda_1}( \g)a_{ - \lambda_2}( \g) \dots .
$$
For $\rho = ( \rho (\g) )_{ \g \in \G^*} \in {\mathcal P}(\G^* )$,
we define
$$
  a_{ - \rho\otimes r^ks^l } = r^{-k\|\rho\|}s^{-l\|\rho\|}\prod_{\g \in \G^*}  a_{ - \rho (\g)}(\g).
$$
It is clear that for a fixed $k\in \mathbb Z$ the
elements $a_{ - \rho\otimes q^k},
\rho \in {\mathcal P}(\G^* )$ form a basis of
$\SGC $ over $\mathbb C[r^{\pm 1}, s^{\pm1}]$.

Given a partition $ \lambda = ( \lambda_1, \lambda_2, \ldots )$
and $c \in \G_*$, we define
\begin{eqnarray*}
  a_{ - \lambda } (c\otimes r^ks^l ) & =& r^{-k|\lambda|}s^{-l|\lambda|}
a_{ - \lambda_1}(c) a_{ - \lambda_2 } (c) \ldots, \\
\end{eqnarray*}
For any $\rho = ( \rho (c) )_{ c \in \G_* } \in
 \mathcal P ( \G_* )$ and $k\in \mathbb Z$, we define
\begin{equation*}
  a_{- \rho\otimes r^ks^l}' = r^{-k\|\rho\|}s^{-l\|\rho\|}
\prod_{ c \in \G_*} a_{ - \rho (c)} (c).
\end{equation*}

It follows from
Proposition~\ref{prop_orth} that
\begin{eqnarray*}  \label{eq_inner}
  \langle a_{ - \rho\otimes r^ms^n}', {a_{ - \overline{\sigma}\otimes r^{k}s^l}'}
\rangle_{\wt }'
  = \delta_{\rho, \sigma}r^{\|\rho\|(k-m)}s^{\|\rho\|(l-n)}
    Z_{\rho} \prod_{c \in \G_*} \prod_{i\geq 1}\wt_{q^i}(c)^{m_i(\rho (c))},
 \end{eqnarray*}
where $\rho, \sigma \in \mathcal P (\G_*)$. Note that
$S(a_{-\rho\otimes r^ks^l}')=a_{-\overline{\rho}\otimes
r^{-k}s^{-l}}'$, where we recall that $\overline{\rho} \in {\mathcal
P}(\G_* )$ is the partition-valued function given by $c\mapsto
\rho(c^{-1})$, $c\in \G$.

\section{The characteristic map as an isometry}
\label{sect_isom}
\subsection{The characteristic map $\ch$}
  Let $\Psi : \Gn \rightarrow \SGC$ be the map defined
by $\Psi (x) = a_{ - \rho}'$ if $x \in \Gn$ is of type $\rho$.

      We define a $\mathbb C$-linear map
$ch: \RGC \longrightarrow \SGC$ by letting
\begin{align} \nonumber
 ch (f )
 &= \langle f, \Psi \rangle_{\Gn}\\
 &= \sum_{\rho \in \mathcal P(\G_*)} Z_{\rho}^{-1} S(f(\rho))
 a_{ - \rho}',  \label{E:ch}
\end{align}
where $f(\rho)\in \mathbb C[r^{\pm1}, s^{\pm1}]$ is the value of $f$
at the elements of type $\rho$. The map $ch $ is called the {\em
characteristic map}. This generalizes the definition of the
characteristic map in the classical setting (cf. \cite{M, FJW1,
FJW2}).

The space $\SGC$ can also be interpreted as follows. The element
$a_{ -n } (\g ), n >0 , \g \in \G^*$ is identified as the $n$-th
power sum in a sequence of variables $ y_{ g } = ( y_{i\g } )_{i
\geq 1}$. By the commutativity among $a_{-n}(\g)$ ($\g\in\G^*, n>0$)
and dimension counting it is clear that the space $\SGC$ is
isomorphic with the space $\LG$ of symmetric functions indexed by $
\G^*$ tensored with $ \mathbb C[r^{\pm1}, s^{\pm1}]$ (cf. \cite{M}).

Denote by $c_n (c \in \G_*)$ the conjugacy class in $\Gn$ of
elements $(x, q) \in \Gn$ such that $q$ is an $n$-cycle and $x \in
c$. Denote by $\sigma_n (c\otimes r^ks^l )$ the class function on
$\Gn\times\Cs\times\Cs$ which takes values $n
\zeta_ct_1^{-nk}t_2^{-nl}$ (i.e. the order of the centralizer of an
element in the class $c_n$ times $t_1^{-nk}t_2^{-nl}$) on elements
in the class $c_n\times t_1t_2$ and $0$ elsewhere. For $\rho = \{
m_r (c) \}_{r \geq 1, c \in \G_*} \in \mathcal P_n (\G^*)$ and
$k\in\mathbb Z$,
$$\sigma_{\rho\otimes r^ks^l} = r^{nl}s^{nk}
\prod_{a \geq 1, c \in \G_*} \sigma_a (c)^{m_a (c)}
$$
is the class function on $\Gn\times\Cs\times\Cs$ which takes value
$Z_{\rho}t_1^{-nk}t_2^{-nl}$ on the conjugacy class of type
$\rho\times t_1t_2$ and $0$ elsewhere. Given $\g\in \G^*$ and $k, l
\in \mathbb Z$, we denote by $\sigma_n (\g \otimes r^ks^l)$ the
class function on $\GCn$ which takes values $n \g
(c)t_1^{-nk}t_2^{-nl}$ on elements in the class $c_n\times t_1t_2(c
\in \G_*)$ and $0$ elsewhere.

\begin{lemm}  \label{lem_isom}
  The map $ch$ sends $\sigma_{\rho\otimes r^ks^l}$ to $a_{ - \rho\otimes r^ks^l} '$.
 In particular, it sends  $\sigma_n(\g\otimes r^ks^l )$ to $a_{ -n} ( \g\otimes r^ks^l )$ in $\SGC$.
\end{lemm}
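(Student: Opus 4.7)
The proof is a direct unwinding of definitions, with the only subtlety being careful tracking of $r,s$-twists through the antipode $S$ appearing in the formula for $ch$. I will first establish the main claim $ch(\sigma_{\rho\otimes r^ks^l})=a_{-\rho\otimes r^ks^l}'$ for an arbitrary partition-valued function $\rho\in\mathcal P_n(\G_*)$, and then derive the ``in particular'' clause as a suitable linear combination.

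For the main claim, recall that by its description immediately above the lemma, $\sigma_{\rho\otimes r^ks^l}$ vanishes outside a single conjugacy class of $\Gn\times\Cs\times\Cs$, namely the class of type $\rho\times t_1t_2$, where it takes the scalar value $Z_\rho\,t_1^{-nk}t_2^{-nl}$ (with $n=\|\rho\|$). Substituting this into the defining formula (\ref{E:ch}) of $ch$, the sum over types collapses to a single term whose $Z_\rho$ factor cancels the prefactor $Z_\rho^{-1}$; after following the antipode $S$ through the scalar $t_1^{-nk}t_2^{-nl}$, one is left with exactly $r^{-\|\rho\|k}s^{-\|\rho\|l}a_{-\rho}'$, which by the definition given just before the lemma is precisely $a_{-\rho\otimes r^ks^l}'$.

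For the ``in particular'' clause, specialize the main claim to the partition-valued function $\rho_{n,c}$ sending $c\in\G_*$ to the one-part partition $(n)$ and every other class to the empty partition. Since $Z_{\rho_{n,c}}=z_{(n)}\zeta_c=n\zeta_c$, matching exactly the coefficient $n\zeta_c$ in the definition of $\sigma_n(c\otimes r^ks^l)$, the main claim yields $ch(\sigma_n(c\otimes r^ks^l))=a_{-n}(c\otimes r^ks^l)$. A direct comparison of values on each class $c_n$ establishes the expansion
$$\sigma_n(\g\otimes r^ks^l)=\sum_{c\in\G_*}\zeta_c^{-1}\g(c)\,\sigma_n(c\otimes r^ks^l),$$
and applying $\mathbb{C}$-linearity of $ch$ together with the previous special case and the reassembly identity (\ref{eq_real}) then yields the claimed equality.

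The main obstacle is purely notational: keeping track of the $r,s$-twists through the antipode $S$ in (\ref{E:ch}) and confirming that $Z_{\rho_{n,c}}=n\zeta_c$. Beyond this bookkeeping, the argument reduces to identifying the support of $\sigma_{\rho\otimes r^ks^l}$ and extracting the correct centralizer order $Z_\rho$ from its definition, after which everything falls out by matching coefficients.
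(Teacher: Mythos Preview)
Your proposal is correct and follows exactly the same approach as the paper, which dispatches the lemma in a single sentence: ``This is verified by the definition of $ch$ (\ref{E:ch}) and the character values of $\sigma_n$ defined above.'' You have simply expanded that sentence into the explicit bookkeeping (support of $\sigma_{\rho\otimes r^ks^l}$, cancellation of $Z_\rho$, tracking the antipode on the $r,s$-factor, and the linear reassembly for the $\gamma$-version via (\ref{eq_real})), which the paper leaves to the reader.
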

\begin{proof} This is verified by the definition
of $ch$ (\ref{E:ch})
and the character values of $\sigma_n$ defined above.
\end{proof}

\begin{prop}
Given $\g\in \G^*$, the
 character value of
$\eta_n(\g\otimes r^ks^l)$ on the conjugacy class $c_{\rho}$ of type
$\rho=(\rho(c))_{c\in\G_*}$ is given by
\begin{equation} \label{E:charval}
\eta_n(\g\otimes r^ks^l)(c_{\rho}) =\prod_{c\in
\G_*}\g(c)^{l(\rho(c))}r^{nk}s^{nl}.
\end{equation}
In particular, we have $\eta_n(\g\otimes
r^ks^l)=\eta_n(\g)r^{nk}s^{nl}$.
\end{prop}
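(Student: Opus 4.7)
The plan is to unpack the action (\ref{E:eta-action}) and factor the computation into two independent pieces: the $\Cs{\times}\Cs$-scalar part and the underlying wreath product character. Concretely, fix a basis $\{e_\mu\}$ of the representation $V$ affording $\g\otimes r^ks^l$, so that $(g_i,(t_1,t_2))$ acts on the $i$-th factor of $V^{\otimes n}$ as $t_1^k t_2^l$ times the action of $g_i$. Applying this to the right-hand side of (\ref{E:eta-action}) shows
\[
(g,\sigma,(t_1,t_2)).(v_1\otimes\cdots\otimes v_n)
= t_1^{nk}t_2^{nl}\,\bigl(g_1\, v_{\sigma^{-1}(1)}\otimes\cdots\otimes g_n\, v_{\sigma^{-1}(n)}\bigr),
\]
so $(t_1,t_2)$ acts as the scalar $t_1^{nk}t_2^{nl}$ and commutes with the $\Gn$-action. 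Hence
\[
\eta_n(\g\otimes r^ks^l)(g,\sigma,(t_1,t_2))
= t_1^{nk}t_2^{nl}\,\chi_{V^{\otimes n}}(g,\sigma),
\]
where $\chi_{V^{\otimes n}}$ is the character of the outer tensor power regarded purely as a $\Gn$-representation. This already accounts for the factor $r^{nk}s^{nl}$ in the claimed formula.

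Next I would compute $\chi_{V^{\otimes n}}(g,\sigma)$ by the classical wreath product trace computation. Writing $\sigma$ as a product of disjoint cycles and tracking indices $\mu_i$ on the basis vectors, a matrix entry is nonzero only when, within each cycle $\omega=(i_1\,i_2\,\cdots\,i_k)$ with $\sigma(i_j)=i_{j+1}$, the indices form a closed loop under composition with the $g_{i_j}$'s. The trace restricted to the tensor slots indexed by $\omega$ collapses to $\mathrm{tr}_V(g_{i_k}g_{i_{k-1}}\cdots g_{i_1})=\g(g_\omega)$, where $g_\omega$ is precisely the cycle-product element of Section~\ref{sect_wreath}. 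Multiplying over cycles, which are independent, gives
\[
\chi_{V^{\otimes n}}(g,\sigma)=\prod_{\omega \text{ cycle of }\sigma}\g(g_\omega).
\]

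To finish, I would invoke the definition of the type $\rho$: the cycles of $\sigma$ whose cycle-product lies in the conjugacy class $c\in\G_*$ are in bijection with the parts of the partition $\rho(c)$, so there are exactly $l(\rho(c))$ of them, and on each such cycle $\g(g_\omega)=\g(c)$ since $\g$ is a class function. Grouping the product above by class yields
\[
\prod_\omega \g(g_\omega)=\prod_{c\in\G_*}\g(c)^{l(\rho(c))},
\]
which combined with the scalar factor proves (\ref{E:charval}); the specialization $\eta_n(\g\otimes r^ks^l)=\eta_n(\g)\,r^{nk}s^{nl}$ is immediate since $\rho$ is the same in both cases. The main obstacle, and the only nontrivial step, is the cycle-trace computation; once the indexing convention $\sigma(i_j)=i_{j+1}$ is fixed to match the definition of $g_\omega$ in Section~\ref{sect_wreath}, the identification of the trace with $\g(g_\omega)$ drops out of a short bookkeeping argument on basis vectors.
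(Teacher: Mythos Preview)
Your proof is correct and follows essentially the same approach as the paper: both reduce to the cycle-trace computation showing that each cycle $\omega$ of $\sigma$ contributes $\g(g_\omega)$ to the trace, with the $\Cs\times\Cs$-factor separating out as the scalar $t_1^{nk}t_2^{nl}$. The only cosmetic difference is that the paper first treats the single $n$-cycle case explicitly and then invokes the multiplicativity $\eta_n(\g\otimes r^ks^l)(x\times y,t)=\eta_n(\g\otimes r^ks^l)(x,t)\,\eta_n(\g\otimes r^ks^l)(y,t)$, whereas you decompose $\sigma$ into cycles and argue the factorization directly.
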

\begin{proof}
We first let $(g, \sigma)$ be an element of $\Gn$ such that $\sigma$ is a cycle of length $n$, say $\sigma=(12\cdots n)$.
 Let $\{e_i\}$ be a basis of $V$, and $\g\otimes r^ks^l$ is afforded
by the action: $(h, t)e_j=\sum_{i}c_{ij}(h)t^ke_i$, where $h\in \G$.
We then have
\begin{align*}
&(g, \sigma, t).(e_{j_1}\otimes e_{j_2}\otimes \cdots\otimes e_{j_n})\\
&=(g_1, t)e_{j_n}\otimes (g_2, t)e_{j_1} \otimes \cdots\otimes (g_n, t)e_{j_{n-1}}\\
&=\sum_{i_1,\ldots, i_n}t^{kn} c_{i_{n}j_n}(g_1)c_{i_1j_1}(g_2)\cdots
c_{i_{n-1}j_{n-1}}(g_n)
e_{i_n}\otimes e_{i_1}\cdots\otimes e_{i_{n-1}}.
\end{align*}

It follows that
\begin{eqnarray*}
  \eta_n (\g\otimes r^ks^l) (c_{\rho}, t) &=& \mbox{trace }(g, \sigma, t)\\
&=& \sum_{j_1, \ldots, j_n}
t^{kn}c_{j_1j_n}(g_1)c_{j_2j_1}(g_2)\cdots
c_{j_nj_{n-1}}(g_n)\\
& =& \mbox{trace } t^{kn}a(g_n) a(g_{n-1}) \ldots a(g_1)   \\
  & =& \mbox{trace } t^{kn}a(g_n g_{n -1} \ldots g_1) = \g (c)r^{kn}s^{ln}(t).
 \end{eqnarray*}

 Given $x\times y \in \Gn$ where
 $x \in \G_r$ and $y \in \G_{n -r}$, by (\ref{E:eta-action}) we clearly have
$$
\eta_n (\g\otimes r^ks^l) (x\times y, t) = \eta_n (\g\otimes r^ks^l)
(x, t) \eta_n (\g\otimes r^ks^l)(y, t).
$$
 This immediately implies the formula.
\end{proof}

A similar argument gives that
\begin{equation}
  \varepsilon_n (\g\otimes r^ks^l ) ( x, t)
    = (-1)^n  \prod_{c\in \G_*}
( - \g (c))^{l(\rho(c))}t^{nk},
   \label{eq_signterm}
 \end{equation}
where $x$ is any element in the conjugacy class of type
$\rho=(\rho(c))_{c\in\G^*}$.

Formula (\ref{E:charval}) is equivalent to the following:
\begin{equation} \label{E:charval''}
\eta_n(\g\otimes r^ks^l)(c_{\rho}, t)=\prod_{c\in \G_*} \prod_{i\geq
1} (\g\otimes r^ks^l)(c, t^{i})^{m_i(\rho(c))}.
\end{equation}

The following result allows us to extend the map from
$\g\in\G^*$ to $R(\G_n)$.

\begin{prop}  \label{prop_exp}
   For any $\g \in R(\G)$, we have
\begin{align}
 \sum\limits_{n \ge 0}  \ch ( \eta_n( \g\otimes r^ks^l ) ) z^n
  &= \exp \Biggl( \sum_{ n \ge 1}
      \frac 1n \, a_{-n}(\g)(r^{-k}s^{-l}z)^n \Biggr), \label{eq_exp} \\
 \sum\limits_{n \ge 0}  \ch ( \varepsilon_n( \g\otimes r^ks^l )  ) z^n
  &= \exp \Biggl( \sum_{ n \ge 1}
      ( -1)^{ n -1} \frac 1n \, a_{-n}(\g)(r^{-k}s^{-l}z)^n \Biggr).
\label{eq_sign}
\end{align}
\end{prop}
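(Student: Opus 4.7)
The plan is to expand each character $\eta_n(\g\otimes r^ks^l)$ (respectively $\varepsilon_n(\g\otimes r^ks^l)$) in the basis of $\sigma_\rho$-type class functions by reading off its values on conjugacy classes from the just-proved formula (\ref{E:charval}) (respectively (\ref{eq_signterm})), apply the characteristic map term by term using Lemma~\ref{lem_isom}, multiply by $z^n$ and sum, and finally recognize the result as an exponential via the classical formal power series identity for power sums.

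Concretely, the character value (\ref{E:charval}) shows that after inverse-character expansion and application of the characteristic map (\ref{E:ch}), one obtains
\begin{equation*}
\ch(\eta_n(\g\otimes r^ks^l))
 = \sum_{\|\rho\|=n} Z_\rho^{-1}\,
     \Bigl(\prod_{c\in\G_*}\g(c)^{\ell(\rho(c))}\Bigr)\,
     r^{-nk}s^{-nl}\, a_{-\rho}',
\end{equation*}
where the $r^{-nk}s^{-nl}$ comes from the action of $S$ on the $r,s$-polynomial value at type $\rho$, and $a_{-\rho}'=\prod_{c}a_{-\rho(c)}(c)$. Multiplying by $z^n$, summing over $n\geq0$, and using the factorizations $Z_\rho=\prod_{c}z_{\rho(c)}\zeta_c^{\ell(\rho(c))}$ and $a_{-\rho}'=\prod_{c}a_{-\rho(c)}(c)$, the double sum reorganizes as a product over $c\in\G_*$ of sums over partitions $\lambda$ of the form $\sum_\lambda z_\lambda^{-1}\prod_i u_{\lambda_i}(c)\cdot t^{|\lambda|}$ with $t=r^{-k}s^{-l}z$ and $u_n(c)=\zeta_c^{-1}\g(c)a_{-n}(c)$.

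At this point I would invoke the classical formal identity
\begin{equation*}
\sum_{\lambda\in\mathcal P}\frac{1}{z_\lambda}\prod_i u_{\lambda_i}\, t^{|\lambda|}
  = \exp\!\Bigl(\sum_{n\geq1}\frac{u_n}{n}t^n\Bigr),
\end{equation*}
applied to each factor indexed by $c$. Since the exponentials over different $c$ commute and combine additively, the product collapses to a single exponential whose inner coefficient is $\sum_{c\in\G_*}\zeta_c^{-1}\g(c)a_{-n}(c)=a_{-n}(\g)$ by the definition (\ref{eq_real}); this gives (\ref{eq_exp}). For (\ref{eq_sign}), formula (\ref{eq_signterm}) supplies an additional sign $(-1)^n\prod_c(-1)^{\ell(\rho(c))}=(-1)^{n+\ell(\rho)}$, which decomposes as $\prod_{c,i}((-1)^{i+1})^{m_i(\rho(c))}$; absorbing this sign into $u_n(c)$ as $(-1)^{n+1}u_n(c)$ and running the same computation produces the alternating exponential on the right-hand side.

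The main bookkeeping obstacle is tracking the $r^{\pm k}s^{\pm l}$ twists correctly through the antipode $S$ in (\ref{E:ch}) and through the definition of $a_{-\rho\otimes r^ks^l}'$, so that the variable $r^{-k}s^{-l}z$ (rather than $r^ks^lz$) appears inside the exponential; apart from this, and the rewrite of the sign $(-1)^{n+\ell(\rho)}$ as a per-part product for the $\varepsilon$-case, the argument is just the classical ``power sum to complete/elementary symmetric function'' generating identity, now applied simultaneously to the copy of $\mathcal P$ indexed by each conjugacy class $c\in\G_*$.
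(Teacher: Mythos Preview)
Your approach is essentially the paper's own: expand $\eta_n(\g\otimes r^ks^l)$ via the character value formula (\ref{E:charval}), push through $\ch$ using Lemma~\ref{lem_isom}, factor over $c\in\Gamma_*$, and invoke the power-sum exponential identity; the $\varepsilon$-case goes the same way with the extra sign. The bookkeeping with $S$ and the $r^{-k}s^{-l}$ twist is exactly right.

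There is, however, one genuine gap. The statement is for arbitrary $\g\in R(\Gamma)$, i.e.\ for \emph{virtual} characters, but the formula (\ref{E:charval}) you invoke was established only for $\g\in\Gamma^*$ (indeed its proof is a trace computation on an honest module $V^{\otimes n}$). For a formal difference $\beta-\g$ of characters, $\eta_n(\beta-\g)$ is \emph{defined} by the alternating induction formula (\ref{eq_virt'}), which mixes $\eta_{n-m}(\beta)$ with $\varepsilon_m(\g)$; the naive product formula $\prod_c(\beta-\g)(c)^{\ell(\rho(c))}$ does not describe its character values, so your direct computation does not apply. The paper handles this explicitly: after running the exponential argument for genuine characters, it uses (\ref{eq_virt}) and (\ref{eq_virt'}) together with the already-proved identities to check that both sides transform the same way under sums and differences, thereby extending the result to all of $R(\Gamma)$. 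You should add that step; without it, your proof covers only $\g\in\Gamma^*$ (or, at best, characters of actual representations).
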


\begin{proof} It follows from definition of ch (\ref{E:ch}) and
(\ref{E:charval''}) that

\begin{eqnarray*}
 &&\sum\limits_{n \ge 0}  \ch ( \eta_n( \g\otimes r^ks^l ) ) z^n\\
  &= & \sum_{\rho} Z_{\rho}^{ -1}
         \prod_{c\in \G_*}\prod_{i\geq 1}S(\g_{r^{ik}s^{il}} (c)^{m_i(\rho(c))})
          a_{ -\rho (c) } z^{|| \rho||}r^{-k||\rho||}s^{-l||\rho||}                  \\
 &= & \sum_{\rho} Z_{\rho}^{ -1}
         \prod_{c\in \G_*}\g (c)^{l(\rho(c))}
          a_{ -\rho (c) } (r^{-k}s^{-l}z)^{|| \rho||}                  \\
  &= & \prod_{c\in \G_*} \Bigl ( \sum_{\lambda }
         (\zeta_c^{ -1}\g (c) )^{l (\lambda)}
         z_{\lambda}^{-1} a_{- \lambda} (c) (r^{-k}s^{-l}z)^{|\lambda|} \Bigr )    \\
  &= & \exp \Biggl  ( \sum\limits_{ n \geq 1}
         \frac1n \sum\limits_{c \in \G_*}
           \zeta_c^{ -1} \g(c) a_{-n} (c) (r^{-k}s^{-l}z)^n \Biggl )      \\
  &= & \exp \Biggl( \sum_{ n \ge 1}
         \frac 1n \, a_{-n}(\g )(r^{-k}s^{-l}z)^n \Biggr).
 \end{eqnarray*}
 Similarly we can prove (\ref{eq_sign}) using
 the following identity
\begin{eqnarray}
  \varepsilon_n (\g\otimes r^ks^l ) ( x)
    &= &(-1)^n  \prod_{c\in \G_*} \prod_{i\geq 1}
( - \g_{r^{ik}s^{il}} (c))^{m_i(\rho(c))} \nonumber\\
&= &(-r^ks^l)^n  \prod_{c\in \G_*} \prod_{i\geq 1}
( - \g (c))^{m_i(\rho(c))} \nonumber\\
&=&\varepsilon_n (\g) ( x)r^{nk}s^{nl}.
   \label{eq_signterm'} \nonumber
 \end{eqnarray}

The same argument as in the classical case (cf. \cite{FJW1, FJW2})
by using (\ref{eq_virt}) and (\ref{eq_virt'}) will show that the
proposition holds for linear combination of simple characters such
as $\g\otimes r^ks^l-\beta\otimes q^k$, and thus it is true for any
element $\g\otimes r^ks^l$, where $\g\in  R(\G)$.
\end{proof}

Comparing components we obtain
\begin{eqnarray*}
  \ch (\eta_n (\g\otimes r^ks^l ) )
  &=& \sum\limits_{\lambda}\frac {r^{-kn}s^{-ln}}{z_\lambda}\,
             a_{-\lambda}(\g ), \\
  \ch (\varepsilon_n (\g\otimes r^ks^l ))
  &=& \sum\limits_{\lambda}\frac {r^{-kn}s^{-ln}}{z_\lambda}\,
             ( -1)^{ | \lambda| - l ( \lambda)} a_{-\lambda}(\g ),
\end{eqnarray*}
where the sum runs over all partitions $\lambda$ of $n$.

\begin{coro}  \label{cor_char}
   The formula (\ref{E:charval''}) remains valid when
$\g\otimes r^ks^l$ is replaced by any element $\wt \in R(\GC)$.
 In particular $\eta_n (\wt)$ is self-dual provided that $\wt$ is
invariant under the antipode $S$.
\end{coro}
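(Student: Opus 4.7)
The plan is to prove the corollary in two stages: first extend the character formula (\ref{E:charval''}) from simple tensors $\g \otimes r^k s^l$ to any $\wt \in \RGC$, then deduce the self-duality of $\eta_n(\wt)$.

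For the first stage, I argue by induction on the number of summands of $\wt$, using the recursive rules (\ref{eq_virt}) and (\ref{eq_virt'}), in direct analogy with the final paragraph of the proof of Proposition \ref{prop_exp}. Write $\wt = \sum_i \epsilon_i \g_i \otimes r^{k_i} s^{l_i}$ with $\epsilon_i \in \{\pm 1\}$ and $\g_i \in \G^*$. For a two-term sum $\wt = \beta \otimes r^{k_1} s^{l_1} + \g \otimes r^{k_2} s^{l_2}$, the recursion (\ref{eq_virt}) decomposes $\eta_n(\wt)$ into characters induced from Young-type subgroups $\Gamma_{n-m} \times \Gamma_m \times \Cs \times \Cs$. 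The Frobenius character formula for induced wreath-product characters evaluated on a class of type $\rho$ yields a sum, ranging over splittings $\rho = \rho' + \rho''$ with $|\rho'| = n-m$, of the product of the inductive values on the two pieces. The inductive hypothesis expresses each term as a product over cycle data of evaluations of $\beta \otimes r^{k_1}s^{l_1}$ and $\g \otimes r^{k_2}s^{l_2}$; reorganizing cycle-by-cycle, this sum matches the multinomial expansion of $\prod_{c, i} (\beta \otimes r^{k_1}s^{l_1} + \g \otimes r^{k_2}s^{l_2})(c, t^i)^{m_i(\rho(c))}$. The difference case goes through (\ref{eq_virt'}), with the sign factor $(-1)^n$ from (\ref{eq_signterm}) absorbing the sign $(-1)^m$ in (\ref{eq_virt'}).

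For the second stage, suppose $\wt$ is $S$-invariant, so $\wt(c, t) = \wt(c^{-1}, t^{-1})$ with $t=(t_1,t_2)$. If $y = (g, \sigma) \in \Gn$ has type $\rho = (\rho(c))_{c \in \G_*}$, then $y^{-1}$ lies in the class of type $\overline{\rho}$ with $\overline{\rho}(c) = \rho(c^{-1})$, since on each cycle the cycle-product of $y^{-1}$ is conjugate to the inverse of the cycle-product of $y$. Applying the extended formula and substituting $c' = c^{-1}$,
\begin{align*}
\eta_n(\wt)(y^{-1}, t^{-1})
&= \prod_{c \in \G_*} \prod_{i \geq 1} \wt(c, t^{-i})^{m_i(\rho(c^{-1}))}\\
&= \prod_{c' \in \G_*} \prod_{i \geq 1} \wt(c'^{-1}, t^{-i})^{m_i(\rho(c'))}\\
&= \prod_{c' \in \G_*} \prod_{i \geq 1} \wt(c', t^{i})^{m_i(\rho(c'))}\\
&= \eta_n(\wt)(y, t),
\end{align*}
where the third equality uses $S$-invariance of $\wt$. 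Hence $S(\eta_n(\wt)) = \eta_n(\wt)$, as claimed.

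The main obstacle is the combinatorial bookkeeping in the first stage: verifying that the induced-character sum over splittings $\rho = \rho' + \rho''$ matches the multinomial expansion of the product formula. This is the same combinatorial core underlying the classical Frobenius-characteristic correspondence and its $q$-deformation in \cite{FJW1, FJW2}; the argument here requires only the additional $(r, s)$-bookkeeping of the weight factors $t_1^{ik} t_2^{il}$, which is multiplicative in exactly the way needed to commute with the cycle-by-cycle product.
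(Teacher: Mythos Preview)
Your proposal is correct and follows essentially the same route as the paper: the paper gives no separate proof of the corollary, treating it as an immediate consequence of the extension argument via (\ref{eq_virt}) and (\ref{eq_virt'}) already invoked at the end of the proof of Proposition~\ref{prop_exp} (with a reference to the classical case in \cite{FJW1, FJW2}), and leaves the self-duality claim unexplained. Your write-up supplies the missing combinatorial and self-duality details that the paper omits, but the underlying strategy is identical.
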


\subsection{Isometry between $\RGC$ and $\SGC$}
    The symmetric algebra
$\SGC=\SG\otimes \mathbb C[r^{\pm1}, s^{\pm1}]$ has the following
Hopf algebra structure over $\mathbb C$. The multiplication is the
usual one, and the comultiplication is given by
\begin{align*}
\Delta(r^ks^l)&=r^ks^l\otimes r^ks^l\\
  \Delta ( a_n (\g\otimes r^ks^l))
&=a_n(\g\otimes r^ks^l)\otimes r^{nk}s^{nl}+r^{nk}s^{nl}\otimes
a_n(\g\otimes r^ks^l ),
\end{align*}
where $\g\in\G^*$. The last formula is equivalent to the following:
\begin{equation}\label{E:coprod}
\Delta ( a_n (c\otimes r^ks^l )) =a_n(c\otimes r^ks^l)\otimes
r^{nk}s^{nl}+r^{nk}s^{nl}\otimes a_n (c\otimes r^ks^l ),
\end{equation}
where $c\in\G_*$.
The antipode is given by
\begin{align*}
S(r^ks^l )&=r^{l}s^{k},\\
S(a_n (\g\otimes r^ks^l  ))&=-a_n(\g\otimes r^{l}s^{k})
\end{align*}
The antipode commutes with the adjoint (dual) map $*$:
\begin{equation}
*^2=S^2=Id, \qquad S*=*S.
\end{equation}

Recall that we have defined a Hopf algebra structure
on $\RGC$ in Sect.~\ref{sect_wreath}.

\begin{prop} \label{P:isometry1}
  The characteristic map $ \ch: \RGC \longrightarrow \SGC$
 is an isomorphism of Hopf algebras.
\end{prop}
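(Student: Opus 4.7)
The plan is to verify that $ch$ is bijective and respects every Hopf algebra operation in turn. The proof exploits the fact that both $\RGC$ and $\SGC$ are polynomial $\mathbb C[r^{\pm 1}, s^{\pm 1}]$-algebras on the power-sum-type generators $\sigma_n(c \otimes r^k s^l)$ and $a_{-n}(c \otimes r^k s^l)$ respectively (for $n \geq 1$, $c \in \G_*$, $k, l \in \mathbb Z$). Bijectivity is immediate from Lemma~\ref{lem_isom}: the basis $\{\sigma_{\rho \otimes r^k s^l}\}$ of $\RGC$ (indexed by $\rho \in \mathcal P(\G_*)$ and $k, l \in \mathbb Z$) is sent bijectively onto the basis $\{a'_{-\rho \otimes r^k s^l}\}$ of $\SGC$.

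For the algebra structure, note that $\sigma_\rho = \prod_{a, c} \sigma_a(c)^{m_a(c)}$ and $a'_{-\rho} = \prod_{a, c} a_{-a}(c)^{m_a(c)}$ have the same combinatorial shape. The induction-based product on $\RGC$ is commutative and presents $\RGC$ as a polynomial algebra on the generators $\sigma_n(c \otimes r^k s^l)$, while $\SGC$ is polynomial on the $a_{-n}(c \otimes r^k s^l)$ by construction. Since $ch$ restricts to a bijection between these two generating sets (by Lemma~\ref{lem_isom} applied to $\rho$ supported on a single pair $(n, c)$), it extends multiplicatively to an algebra isomorphism.

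For the coalgebra structure it suffices to verify the coproduct identity on the generators $\sigma_n(c \otimes r^k s^l)$, since $\Delta$ is multiplicative and $ch$ is already an algebra map. The key point is that an $n$-cycle in $\Gn$ cannot split under restriction to the subgroup $\Gamma_{n-m} \times \Gamma_m$ unless $m \in \{0, n\}$; combined with the diagonal $\Delta_{\Cs \times \Cs}(r^k s^l) = r^k s^l \otimes r^k s^l$ this yields
\begin{equation*}
\Delta(\sigma_n(c \otimes r^k s^l)) = \sigma_n(c \otimes r^k s^l) \otimes r^{nk} s^{nl} + r^{nk} s^{nl} \otimes \sigma_n(c \otimes r^k s^l),
\end{equation*}
matching formula (\ref{E:coprod}) for $\Delta(a_{-n}(c \otimes r^k s^l))$ exactly. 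The antipode and counit comparisons are then straightforward: $S(\sigma_n(c \otimes r^k s^l))(x, t) = \sigma_n(c \otimes r^k s^l)(x^{-1}, t^{-1})$ simplifies to $\sigma_n(c^{-1} \otimes r^{-k} s^{-l})$, and under $ch$ this matches the explicit antipode on $\SGC$ after expressing $a_{-n}(c)$ via $\sum_\gamma S(\gamma(c)) a_{-n}(\gamma)$; the counit vanishes on positive-degree components in both rings.

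The main obstacle is the coalgebra step. One must carefully track the interaction between the restriction functor on $\Gn$ and the diagonal comultiplication on $\Cs \times \Cs$, and confirm that the $r, s$ twists built into the class functions $\sigma_n(c \otimes r^k s^l)$ combine exactly into the tensor factors $r^{nk} s^{nl}$ appearing on the $\SGC$-side of (\ref{E:coprod}). A secondary subtlety is reconciling the non-standard antipode convention on $\SGC$ (which involves an $r \leftrightarrow s$ swap on the scalar factor together with a sign on the generators $a_n(\g)$) with the group-theoretic antipode $S(f)(g, t) = f(g^{-1}, t^{-1})$ on $\RGC$ under the isomorphism.
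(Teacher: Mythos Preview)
Your proposal is correct and follows the same approach as the paper---indeed it supplies far more detail than the paper does. The paper's entire proof is the single sentence ``It follows immediately from the definition of the comultiplication in both Hopf algebras (cf.\ (\ref{E:comult}) and (\ref{E:coprod})),'' whereas you carefully verify bijectivity via Lemma~\ref{lem_isom}, check the algebra structure on polynomial generators, reduce the coalgebra check to the primitive-like behavior of $\sigma_n(c\otimes r^ks^l)$ under restriction, and treat the antipode and counit separately.

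One remark: you are right to flag the antipode as a subtlety. The paper's antipode on $\SGC$ is written as $S(r^ks^l)=r^ls^k$ (an $r\leftrightarrow s$ swap) rather than $r^{-k}s^{-l}$, which does not literally match the group-theoretic antipode $S(f)(g,t)=f(g^{-1},t^{-1})$ on $\RGC$ given in Section~\ref{sect_wreath}. This appears to be a typographical inconsistency in the paper itself rather than a flaw in your argument; the intended antipode on $\SGC$ should invert $r,s$, in which case your verification goes through cleanly.
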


\begin{proof}
It follows immediately from the definition of the comultiplication in
the both Hopf algebras (cf. (\ref{E:comult}) and (\ref{E:coprod})).
\end{proof}

    Recall that we have defined a bilinear form
$\langle \  ,  \, \rangle_{\wt }$ on $\RGC$ and
a bilinear form on $\SGC$ denoted by
$\langle \ , \, \rangle_{\wt }'$, where $\wt$ is a self-dual
class function. The following
lemma is immediate from our definition of
$\langle \ , \, \rangle_{\wt }'$ and the
comultiplication $\Delta$.

\begin{lemm}
  The bilinear form $\langle \  ,  \, \rangle_{\wt } '$ on $\SGC$
 can be characterized by the following two properties:

 1). $\langle a_{ -n} (\beta\otimes r^{k_1}s^{l_1} ), a_{ -m} (\g\otimes r^{k_2}s^{l_2} )
\rangle_{\wt}^{'}
  = \delta_{n, m} r^{n(k_2-k_1)} s^{n(l_2-l_1)}\langle \beta , \g  \rangle_{\wt}^{'} ,
  \quad \beta, \g \in \G^*,$ $ k_1, k_2, l_1, l_2\in\mathbb Z.$

 2). $ \langle f g , h \rangle_{\wt}^{'}
       = \langle f \otimes g, \Delta h
\rangle_{\wt}^{'} ,$
 where $f, g, h \in \SGC $, and the bilinear form on
$\SGC \otimes \SGC$, is induced
 from $\langle \ , \ \rangle_{\wt}^{'}$ on $\SGC$.
\end{lemm}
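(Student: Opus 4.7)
The plan is to verify the two stated properties directly and then observe that together with the normalization $\langle 1,1\rangle'_\xi=1$ (which is part of the definition of the form via (\ref{eq_form})), they determine the pairing uniquely on $\SGC$, since $\SGC$ is freely generated as a commutative $\mathbb C[r^{\pm1},s^{\pm1}]$-algebra by the elements $a_{-n}(\gamma\otimes r^ks^l)$.

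For property (1), the approach is to move one creation operator across the pairing via the adjoint (\ref{eq_form}): since $a_{-n}(\beta\otimes r^{k_1}s^{l_1})$ acts on $\SGC$ by multiplication,
\[
\langle a_{-n}(\beta\otimes r^{k_1}s^{l_1})\cdot 1,\, a_{-m}(\gamma\otimes r^{k_2}s^{l_2})\rangle'_\xi
=\langle 1,\, a_n(\beta\otimes r^{k_1}s^{l_1})\cdot a_{-m}(\gamma\otimes r^{k_2}s^{l_2})\rangle'_\xi,
\]
using (\ref{eq_hermit}). Because $a_n(\beta\otimes r^{k_1}s^{l_1})$ annihilates $1$ for $n>0$, only the commutator term survives. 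Applying the Heisenberg relation (\ref{E:heisen}) extracts the Kronecker $\delta_{n,m}$, the factor $n$, the weighted inner product $\langle\beta,\gamma\rangle_\xi^{r^n,s^n}$, and the appropriate monomial in $r,s$ arising from the rule $a_{\pm n}(\gamma\otimes r^ks^l)=r^{\pm nk}s^{\pm nl}a_{\pm n}(\gamma)$ together with the way $*$ acts on the $r,s$-factors; collecting everything yields the stated formula.

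For property (2), the strategy is induction on the length of $f$ as a monomial in the generators $a_{-n}(\gamma\otimes r^ks^l)$. When $f=1$, both sides reduce to $\langle g,h\rangle'_\xi$: the right-hand side uses that in $\Delta h$ only the $1\otimes h$ component pairs nontrivially with $1\otimes g$, which follows from (1). For the inductive step, factor $f=a\cdot f'$ with $a=a_{-n}(\gamma\otimes r^ks^l)$ a single generator. On the left, (\ref{eq_form}) gives $\langle af'g,h\rangle'_\xi=\langle f'g,a^*h\rangle'_\xi$; by induction this equals $\langle f'\otimes g,\Delta(a^*h)\rangle'_\xi$. On the right, use that $\Delta$ is an algebra homomorphism and that $a$ is primitive in the form (\ref{E:coprod}), so $\Delta(a\cdot h)=(a\otimes r^{-nk}s^{-nl}+r^{-nk}s^{-nl}\otimes a)\Delta h$; the adjoint of this coproduct action transfers the $a$ to either tensor factor of $\Delta h$, and property (1) shows the cross term with $f'$ vanishes, leaving precisely $\langle f'\otimes g,\Delta(a^*h)\rangle'_\xi$. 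Uniqueness is then automatic: (1) pins down the pairing on generators, and (2) together with the algebra-homomorphism property of $\Delta$ recursively reduces the pairing of arbitrary monomials to the generator case.

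The main obstacle is the bookkeeping of the $r,s$-monomial prefactors as they pass through the adjoint (where $*$ also acts on $r,s$ by the bar involution), the Heisenberg commutator (which evaluates $\xi$ at $(r^n,s^n)$), and the coproduct (which by (\ref{E:coprod}) produces weights $r^{\pm nk}s^{\pm nl}$). The combinatorics is linear in each factor so the verification is routine, but the signs of the exponents and the appearance of the bar action at each step require care to produce the clean form $r^{n(k_2-k_1)}s^{n(l_2-l_1)}$ in property (1) and the compatibility with $\Delta$ in property (2).
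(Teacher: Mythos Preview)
Your proposal is correct and is essentially an explicit unpacking of what the paper treats as trivial: the paper offers no proof beyond the remark that the lemma ``is immediate from our definition of $\langle\ ,\ \rangle_{\wt}'$ and the comultiplication $\Delta$.'' Your plan---use the adjoint characterization (\ref{eq_form})--(\ref{eq_hermit}) plus the Heisenberg relation (\ref{E:heisen}) for property~(1), and induct on monomial length using the primitivity (\ref{E:coprod}) of the generators for property~(2)---is exactly the content behind that remark, just spelled out.
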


\begin{theo}  \label{th_isometry}
  The characteristic map is an isometry from the space

 $ (\RGC, \langle \ \ , \ \  \rangle_{\wt })$ to the space
 $ (\SGC, \langle \ \ , \ \  \rangle_{\wt }' )$.
\end{theo}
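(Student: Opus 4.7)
The plan is to verify the isometry on an explicit basis. By Proposition~\ref{P:isometry1} the characteristic map is already a $\mathbb C$-linear bijection (in fact a Hopf algebra isomorphism), so it suffices to check
\[
\langle \sigma_{\rho\otimes r^m s^n},\, \sigma_{\bar\sigma\otimes r^k s^l}\rangle_\xi \;=\; \langle a_{-\rho\otimes r^m s^n}',\, a_{-\bar\sigma\otimes r^k s^l}'\rangle_\xi'
\]
for all $\rho,\sigma\in\mathcal P(\G_*)$ and $k,l,m,n\in\mathbb Z$, because Lemma~\ref{lem_isom} sends $\sigma_{\rho\otimes r^k s^l}\mapsto a_{-\rho\otimes r^k s^l}'$, and these elements form $\mathbb C[r^{\pm1},s^{\pm 1}]$-bases of $\RGC$ and $\SGC$ respectively.

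The right-hand side has already been computed in the display immediately after Proposition~\ref{prop_orth} to equal $\delta_{\rho,\sigma}\, r^{\|\rho\|(k-m)} s^{\|\rho\|(l-n)}\, Z_\rho \prod_{c\in\G_*}\prod_{i\geq 1}\xi_{r^i,s^i}(c)^{m_i(\rho(c))}$. For the left-hand side I would unravel the definition
\[
\langle f,g\rangle_\xi \;=\; \langle \eta_n(\xi)*f,\, g\rangle_{\Gn}^{r,s},
\]
use the fact that $\sigma_{\rho\otimes r^m s^n}$ is supported on the single $\Gn$-conjugacy class of type $\rho$ with value essentially $Z_\rho r^{-|\rho|m} s^{-|\rho|n}$ there, and apply Corollary~\ref{cor_char} (equivalently (\ref{E:charval''})) to evaluate $\eta_{|\rho|}(\xi)$ on that class as $\prod_{c,i}\xi_{r^i,s^i}(c)^{m_i(\rho(c))}$. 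After noting that $S(\sigma_{\bar\sigma\otimes r^k s^l})$ is supported on the class of type $\sigma$ (since inverting elements of type $\bar\sigma$ gives type $\sigma$ by definition of $\bar\sigma$), the orthogonality of these class-indicators forces $\rho=\sigma$ and leaves exactly the expression on the right-hand side.

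The main obstacle is the bookkeeping of twists: (i) the $Z_\rho^{-1}$ centralizer weight in the standard form on $R(\GCn)$ must absorb one of the two $Z_\rho$ factors coming from the values of $\sigma_{\rho\otimes r^m s^n}$ and $\sigma_{\bar\sigma\otimes r^k s^l}$, leaving a single $Z_\rho$; (ii) the $r,s$-exponents contributed by the two $\sigma$-functions and by the antipode $S$ must combine to $r^{\|\rho\|(k-m)}s^{\|\rho\|(l-n)}$; (iii) the self-duality of $\xi$, in the form $\xi_{r^i,s^i}(c^{-1})=\xi_{r^{-i},s^{-i}}(c)$, is needed so that the $\xi$-factor produced by $\eta_n(\xi)$ on the wreath-product side matches the one on the symmetric-algebra side. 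Once these three reconciliations are carried out, both bilinear forms agree term-by-term on the distinguished basis and the theorem follows. A conceptual cross-check is that Proposition~\ref{prop_exp} and the fact that $\ch$ is a Hopf homomorphism allow the same identity to be read off from the generating functions $\sum_n\ch(\eta_n(\xi))z^n$, which provides an independent sanity check of the sign and twist conventions.
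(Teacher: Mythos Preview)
Your proposal is correct and follows essentially the same route as the paper's own proof: both compute the weighted form on the basis $\{\sigma_{\rho\otimes r^k s^l}\}$, invoke Lemma~\ref{lem_isom} to transport to $\{a'_{-\rho\otimes r^k s^l}\}$, evaluate $\eta_n(\xi)$ on a class of type $\rho$ via Corollary~\ref{cor_char}, and compare with the already-computed inner product (\ref{eq_inner}) on $\SGC$. Your bookkeeping checklist (i)--(iii) and the generating-function cross-check are more explicit than the paper's treatment but do not constitute a different argument.
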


\begin{proof}
  By Corollary~\ref{cor_char}, the character
 value of $\eta_n (\wt )$ at an element $x$ of type $\rho$ is
 $$
  \eta_n (\wt ) ( x)= \prod_{c\in \G_*}\prod_{i\geq 1} \wt_{q^i} (c)^{m_i(\rho(c))}.
 $$
 Thus it follows from definition that
 \begin{eqnarray*}
   \langle \sigma_{ \rho\otimes r^{k_1}s^{l_1}}, \sigma_{  \rho '\otimes r^{k_2}s^{l_2}}
\rangle_{\wt }
  & =& \sum_{\mu \in \mathcal P_n (\G_*)}
       Z_{\mu}^{ -1} r^{n(k_2-k_1)}s^{n(l_2-l_1)}\wt_{q} (c_{\mu }) \sigma_{\rho} (c_{\mu})
       \sigma_{\rho '} (c_{\mu})                      \\
  & =& \delta_{\rho, \rho '}
       Z_{\rho}^{ -1} r^{n(k_2-k_1)}s^{n(l_2-l_1)}\wt (c_{\rho}) Z_{\rho}Z_{\rho}  \\
  & =& \delta_{\rho, \rho '}
       Z_{\rho}  r^{n(k_2-k_1)}s^{n(l_2-l_1)}
\prod_{c \in \G_*}\prod_{i\geq 1} \wt_{q^i} (c)^{m_i(\rho(c))}.
 \end{eqnarray*}

  By Lemma~\ref{lem_isom} and the formula (\ref{eq_inner}), we
 see that
 \[
  \langle \sigma_{ \rho\otimes r^{k_1}s^{l_1}}, \sigma_{  \rho '\otimes r^{k_2}s^{l_2}}
\rangle_{\wt }
  = \langle a_{- \rho\otimes r^{k_1}s^{l_1}}, a_{ - \rho '\otimes r^{k_2}s^{l_2}} \rangle_{\wt } '
  =  \langle \ch (\sigma_{ \rho\otimes r^{k_1}s^{l_1}}),
             \ch (\sigma_{  \rho '\otimes r^{k_2}s^{l_2}} ) \rangle_{\wt } '.
 \]
 Since $\sigma_{\rho\otimes r^{k}s^{l}}, \rho \in \mathcal P(\G_*)$ form
 a $\mathbb C$-basis of $\RGC$, we have shown that
 $\ch: \RGC \longrightarrow \SGC$ is an isometry.
\end{proof}

%To summarize the situation we have the following diagram:
%\setlength{\unitlength}{1mm}
%\begin{picture}(40, 28)(-38, 0)
%\put(-6, 20){$\RGC$}
%\put(6, 21){\vector(1, 0){28}}\put(18, 22){ch}
%\put(37, 20){$\SGC$}
%\put(0, 18){\vector(1, -1){12}}\put(2, 11){$\simeq$}
%\put(39, 18){\vector(-1, -1){12}}\put(35, 11){$\simeq$}
%\put(10, 3){$\Lambda_{\Gamma}\otimes{\mathbb C[q, q^{-1}]}$}
%\end{picture}

From now on we will not distinguish the bilinear form $\langle \ , \
\rangle_{\wt}$ on $\RGC$
 from the bilinear form $\langle \ , \ \rangle_{\wt}^{'}$ on $\SGC$.

\section{Two-parameter quantum vertex operators and $\RGC$}
\label{sect_vertex}
\subsection{Two-parameter vertex operators and Heisenberg algebras in
$\FGC$}
 Let $\mathbb{K}=\mathbb{Q}(r,s)$ denote a
field of rational functions with two-parameters $r$, $s$ ($r\ne \pm
s$). Let $Q$ be an integral lattice with basis $\a_i$, $i=0, 1,
\ldots, n-1$ endowed with a symmetric bilinear form. We fix a
$2$-cocycle $\epsilon: Q\times Q \longrightarrow
\mathbb{K}^{\times}$ has the following properties:
\begin{eqnarray*}
&&\ep(\a+\beta,\,\gamma)=\ep(\a,\, \beta)\ep(\beta,\, \gamma),\\
&&\ep(\a,\, \beta+\gamma)=\ep(\a,\, \beta)\ep(\a,\, \gamma)
\end{eqnarray*}
We construct such a cocycle directly by
$$\ep(\alpha_i,\alpha_j)=\left\{\begin{array}{cl} (-r_is_i)^{\frac{a_{ij}}{2}},
& \ i> j;\\
(rs)^{\frac{1}{2}},
& \ i= j;\\
1,& \ i<j.\\
\end{array}\right.$$

Let $\xi$ be a self-dual virtual character in $\RGC$. Recall that
the lattice $\Rz$ is a $\mathbb Z$-lattice under  the bilinear form
$\langle \ , \ \rangle_{\wt}^1$, here the superscript means
$r=s^{-1}=1$. For our purpose we will always associate a $2$-cocycle
$\ep$ as in the previous subsection to the integral lattice $(\Rz,
\langle \ , \ \rangle_{\wt}^1 )$ (and its sublattices).

Let $\mathbb C[\Rz]$ be the group algebra
generated by $e^{\g }$, $\g \in \Rz$.
We introduce two special operators acting on $\mathbb C[ \Rz ]$:
A ($\ep$-twisted) multiplication operator $e^{\alpha}$ defined by
 $$
  e^{\alpha }.e^{\beta } = \ep(\alpha, \beta) e^{\alpha +\beta},
 \quad  \alpha, \beta  \in \Rz,
 $$
and a differentiation operator
${\partial_{\alpha }}$ given by
\begin{eqnarray*}
 {\partial_{\a}} e^{ \beta} =
 \langle \a, \beta \rangle_{\wt}^1  e^{ \beta},
 \quad  \alpha, \beta  \in \Rz.
\end{eqnarray*}
These two operators are then extended linearly to the space
\begin{equation}\label{E:fgc}
\FGC = \RGC \otimes \mathbb C[\Rz]
\end{equation}
by letting them act on the $\RGC$ part trivially.

We define the Hopf algebra structure on $\mathbb C[\Rz]$
and extend the Hopf algebra structure from $\RGC$ to $\FGC$as follows.
\begin{equation*}
\Delta(e^{\alpha})=e^{\alpha}\otimes e^{\alpha},
\qquad S(e^{\alpha})=e^{-\alpha}.
\end{equation*}

The bilinear form $\langle\ , \ \rangle_{\wt}^{r\,s}$ on $\RGC$ is
extended to $\FGC$ by
\begin{equation*}
\langle e^{\alpha}, e^{\beta}\rangle_{\wt}=\delta_{\a,\beta}.
\end{equation*}

With respect to
this extended bilinear form we have the $*$-action (adjoint action)
 on the
operators $e^{\alpha}$ and ${\partial}_{\alpha}$:
\begin{equation}
(e^{\alpha})^*=e^{-\alpha}, \qquad
(z^{{\partial}_{\alpha}})^*=z^{-{\partial}_{\alpha}}.
\end{equation}

For each $k\in\mathbb Z$, we introduce the group theoretic operators
\linebreak $ H_{ \pm n}( \g\otimes r^ks^l ), E_{ \pm n} ( \g\otimes
r^ks^l), \g \in R(\G), n > 0 $ as the following compositions of
maps:
\begin{eqnarray*}
  H_{ -n} ( \g\otimes r^ks^l ) &:&
    R ( \GCm )
  \stackrel{ \eta_n (\g\otimes r^ks^l) \otimes}{\longrightarrow}
    R ( \GCn ) \otimes R ( \GCm )\\&&
  \stackrel{ {Ind}\otimes m_{\Cs} }{\longrightarrow}
    R ( \G_{n +m}\times\Cs\times\mathbb C^{\times} )   \\
  E_{ -n} ( \g\otimes r^ks^l ) &:&
    R ( \GCm )
  \stackrel{ \varepsilon_n (\g\otimes r^ks^l) \otimes}{\longrightarrow}
    R ( \GCn ) \otimes R ( \GCm )\\&&
  \stackrel{ {Ind}\otimes m_{\Cs} }{\longrightarrow}
    R ( \G_{n +m}\times\Cs\times\mathbb C^{\times} )   \\
  E_n ( \g\otimes r^ks^l ) &:&
    R ( \GCm )
   \stackrel{ {Res} }{\longrightarrow}
    R( \Gn) \otimes R( \G_{m -n}\times\Cs\times\mathbb C^{\times})\\&&
   \stackrel{ \langle \varepsilon_n (\g\otimes r^ks^l),
         \cdot \rangle_{\wt } }{\longrightarrow}
    R ( \G_{m -n}\times\Cs\times\mathbb C^{\times}) \\
  H_n(\g\otimes r^ks^l ) &:&
    R ( \GCm )
   \stackrel{ {Res} }{\longrightarrow}
    R( \Gn) \otimes R( \G_{m -n}\times\Cs\times\mathbb C^{\times})\\&&
   \stackrel{ \langle \eta_n (\g\otimes q^k),
\cdot \rangle_{\wt} }{\longrightarrow}
    R ( \G_{m -n}\times\Cs\times\mathbb C^{\times}) ,
\end{eqnarray*}
where $Res$ and $Ind$ are the restriction and induction functors
in $R_{\G}=\bigoplus_{n\geq 0}R(\G_n)$.

\medskip

We introduce their generating functions in a formal variable $z$:
\begin{eqnarray*}
  H_{\pm} (\g\otimes r^ks^l, z) &=& \sum_{ n\geq 0} H_{ \mp n}
( \g\otimes r^ks^l ) z^{\pm n}, \\
  E_{\pm} (\g\otimes r^ks^l, z) &=& \sum_{ n\geq 0} E_{ \mp n}
( \g\otimes r^ks^l )( -z)^{\pm n}.
\end{eqnarray*}
  We now define
the vertex operators $Y_n ^{\pm}(\g\otimes r^ks^l, a, b)$ , $\g\in
\G^*$, $k, l, a, b\in \mathbb Z$, $n \in {\mathbb Z} + \langle \g,
\g \rangle_{ \wt}^1 /2$ as follows.
\begin{align}  \nonumber
 Y ^{+}( \g\otimes r^ks^l, a, b, z)
  & = \sum\limits_{n \in
  {\mathbb Z} + \langle \g, \g \rangle_{ \wt}^1 /2}
 Y_n^{+}( \gamma\otimes r^ks^l, a, b)
z^{ -n - \langle \g, \g \rangle_{ \wt}^1 /2}
\\
  & =  H_+ (\g\otimes r^ks^l, z) E_- (\g\otimes r^{k-a}s^{l-b} , z) e^{ \g}
(r^{-k}s^{-l}z)^{ \partial_{ \g}}, \label{eq_vo}
\end{align}
\begin{align}\nonumber
Y^-(\g\otimes r^ks^l, a, b, z)&=(Y^+(\g\otimes r^ks^l, a, b, z^{-1}))^*\\
\nonumber &=\sum\limits_{n \in
  {\mathbb Z} + \langle \g, \g \rangle_{ \wt}^1 /2}
 Y_n^{-}( \gamma\otimes r^ks^l, a, b)
z^{ -n - \langle \g, \g \rangle_{ \wt}^1 /2}\\
&=E_+ (\g\otimes r^{k-a}s^{l-b}, z) H_- (\g\otimes r^ks^{l} , z) e^{
-\g} (r^{-k}s^{-l}z)^{ -\partial_{ \g}}.
\end{align}

One easily sees that the operators $Y_n^{\pm} (\g\otimes r^ks^{l},
a, b)$ are well-defined operators acting on the space $\FGC$.

We extend the $\mathbb Z_+$-gradation
on $\RGC$ to a $\frac12\langle \g , \g \rangle_{\wt}^1 +
 \mathbb Z_+$-gradation on
$\FGC$ by letting
\begin{eqnarray*}
  \deg  a_{ -n} (\g\otimes r^ks^l ) = n , \quad
  \deg e^{\g } = \frac12 \langle \g , \g \rangle_{\wt}^1 .
\end{eqnarray*}

We denote by $\RGGC$ the subalgebra of $\RGC$
excluding the generators
$a_n(\g_0)$, $n\in \mathbb Z^{\times}$.
The bilinear form $\langle \ , \ \rangle_{\xi}$ on
$$\FGGC=\RGGC\otimes\Rzz
$$
will be the restriction of $\langle \ , \ \rangle_{\xi}$
on $\FGC$ to $\FGGC$.
%In the case of the second choice of $\xi$ and $p=q^{\pm 1}$, the
%Fock space $\FGGC$ can also be obtained as
%the quotient of $\FGC$ modulo the radical
%of $\langle \ , \ \rangle_{\xi}$.

%
%
%
%

We define $ \widetilde{a}_{ -n} (\gamma\otimes r^ks^l), n >0$ to be
a map from $\RGC$ to itself by the following composition
\begin{align*}
  R (\GCm) \stackrel{ \sigma_n ( \g\otimes r^ks^l ) \otimes }{\longrightarrow}&
  R(\GCn) \otimes R (\GCm) \\
& \stackrel{{Ind\otimes m_{\Cs}} }{\longrightarrow}
  R ( {\Gamma}_{n +m}\times \Cs\times\mathbb C^{\times}).
\end{align*}
We also define $ \widetilde{a}_{ n} (\gamma\otimes r^ks^l), n >0$ to
be a map from $\RGC$ to itself as the composition
\begin{align*}
  R (\GCm)  \stackrel{ Res \otimes 1}{\longrightarrow}&
   R(\GCn)\otimes R ( {\G }_{m -n}\times \Cs\times\mathbb C^{\times})\\
 &\stackrel{ \langle \sigma_n ( \g\otimes r^ks^l), \cdot \rangle_{\wt}^q}{\longrightarrow}
 R ( {\G }_{m -n}\times\Cs\times\mathbb C^{\times}).
\end{align*}

\begin{prop} The operators $\widetilde{a}_{n} (\gamma)$,
$\g\in \G^*, n\in\mathbb Z^{\times}$ satisfy the
Heisenberg algebra relations (\ref{eq_heis}) with $C=1$.
\end{prop}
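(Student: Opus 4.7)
The strategy is to transport the problem to the symmetric function side via the characteristic map $\ch$, where the Heisenberg relations have already been established in Proposition~\ref{prop_orth}. Since $\ch:\RGC\to\SGC$ is both a Hopf algebra isomorphism (Proposition~\ref{P:isometry1}) and an isometry (Theorem~\ref{th_isometry}) with respect to the weighted bilinear forms, it suffices to show that $\widetilde{a}_{\pm n}(\g)$ is intertwined with the ordinary Heisenberg action $a_{\pm n}(\g)$ on $\SGC$ described in Section~5.2.

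First I would identify $\widetilde{a}_{-n}(\g\otimes r^ks^l)$ with multiplication by $a_{-n}(\g\otimes r^ks^l)$ under $\ch$. By definition $\widetilde{a}_{-n}(\g\otimes r^ks^l)(f)$ is precisely the Hopf-algebraic product $\sigma_n(\g\otimes r^ks^l)\cdot f$ in $\RGC$, and Lemma~\ref{lem_isom} gives $\ch(\sigma_n(\g\otimes r^ks^l))=a_{-n}(\g\otimes r^ks^l)$. Since $\ch$ is an algebra homomorphism we obtain the intertwining identity
\[
\ch\bigl(\widetilde{a}_{-n}(\g\otimes r^ks^l)(f)\bigr)=a_{-n}(\g\otimes r^ks^l)\cdot\ch(f).
\]

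Next I would identify $\widetilde{a}_{n}(\g\otimes r^ks^l)$ with the contraction operator $a_n(\g\otimes r^ks^l)$. By construction $\widetilde{a}_n$ uses restriction followed by pairing with $\sigma_n$ under $\langle\ ,\ \rangle_\wt$, so Frobenius reciprocity for the adjoint pair $(Ind,Res)$ immediately yields
\[
\langle\widetilde{a}_n(\g\otimes r^ks^l)f,h\rangle_\wt
=\langle f,\widetilde{a}_{-n}(\g\otimes r^ks^l)h\rangle_\wt,
\]
that is, $\widetilde{a}_n$ is the $\langle\ ,\ \rangle_\wt$-adjoint of $\widetilde{a}_{-n}$. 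Transporting via $\ch$, which is an isometry, one sees that $\widetilde{a}_n(\g\otimes r^ks^l)$ corresponds to the operator on $\SGC$ adjoint to multiplication by $a_{-n}(\g\otimes r^ks^l)$; by (\ref{eq_hermit}) this is exactly the differential/contraction operator $a_n(\g\otimes r^ks^l)$ described in Section~5.2.

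Given these two intertwining statements, the commutator of the $\widetilde{a}$'s on $\RGC$ equals the conjugate (via $\ch$) of the commutator of the $a$'s on $\SGC$. Applying Proposition~\ref{prop_orth} and then converting from the irreducible character basis $\{a_n(\g)\}$ back to the conjugacy class basis $\{a_n(c)\}$ via (\ref{eq_real}) and the orthogonality relation (\ref{eq_orth}) yields the required Heisenberg relations (\ref{eq_heis}) with $C=1$. The only real bookkeeping step, which I expect to be the most delicate, is checking that the $r,s$-dependence in the contraction formula matches $\xi_{r^m,s^m}(c)$ on the nose; this is handled by the fact that the weighted form $\langle\ ,\ \rangle_\xi^{r^m,s^m}$ enters Proposition~\ref{prop_orth} with precisely the exponent $m$ dictated by the mode index, and our definition of $\widetilde{a}_n(\g\otimes r^ks^l)$ carries the matching factor $r^{nk}s^{nl}$ from $\sigma_n$.
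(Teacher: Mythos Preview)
Your proposal is correct and gives an explicit version of what the paper leaves implicit: the paper's own proof is a one-line reference to \cite{W}, and your argument via the characteristic map is exactly the standard route in that classical setting. The logic---identify $\widetilde{a}_{-n}(\g)$ with multiplication by $a_{-n}(\g)$ using Lemma~\ref{lem_isom} and the algebra homomorphism property of $\ch$, then identify $\widetilde{a}_{n}(\g)$ with the contraction operator by adjointness and the isometry property---is sound.

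One point deserves a sentence of justification that you glide over: Frobenius reciprocity for the \emph{weighted} bilinear form is not quite automatic, since $\langle f,g\rangle_{\wt,\Gm}=\langle\eta_m(\wt)*f,g\rangle_{\Gm}$ and you need to pass the weight through the restriction. What makes this work is that $\eta_m(\wt)$ restricted to $\Gn\times\G_{m-n}$ equals $\eta_n(\wt)\otimes\eta_{m-n}(\wt)$ (immediate from the tensor-product construction of $\eta_m$), so the weighted form is genuinely compatible with the adjoint pair $(Ind,Res)$. Once you insert that observation, your adjointness claim $\langle\widetilde{a}_n(\g)f,h\rangle_\wt=\langle f,\widetilde{a}_{-n}(\g)h\rangle_\wt$ is rigorous, and the rest follows exactly as you say. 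The final ``conversion back to the $c$-basis'' is in fact unnecessary: the statement only asks that the $\widetilde{a}_n(\g)$ realize the Heisenberg algebra $\hg$ with $C=1$, and Proposition~\ref{prop_orth} already gives those relations in the $\g$-basis.
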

\begin{proof} This is similarly proved as for the classical
setting in \cite{W}.
\end{proof}

\subsection{Group theoretic interpretation of vertex operators}
To compare the vertex operators $Y^{\pm}(\g\otimes q^k, a, b, z)$
with the familiar vertex operators acting in the Fock space we
introduce the space
$$
  \VGC = \SGC \otimes \mathbb C [ \Rz].
$$

We extend the bilinear form $\langle \  , \ \rangle_{\wt }^{r,\, s}$
in $\SGC$ to the space $\VGC$ and also extend the $\mathbb
Z_+$-gradation on $\SGC$ to a $\frac12 \mathbb Z_+$-gradation on
$\VG$.

We extend the characteristic map to the map
\begin{equation*}
ch: \FGC\longrightarrow \VGC
\end{equation*}
by identity on $\Rz$. Then Proposition \ref{P:isometry1}
and Theorem \ref{th_isometry}
imply that we have an isometric isomorphism of Hopf algebras.
We can now identify the operators from the previous
subsections with the operators constructed
from the Heisenberg algebra.

\begin{theo}
\label{T:characteristic}
For any $\g \in R(\G)$ and $k\in\mathbb Z$,  we have
  \begin{eqnarray} \label{E:ch1}
   \ch \bigl ( H_+ (\g\otimes r^ks^l, z) \bigl )
   &=& \exp \biggl ( \sum\limits_{ n \ge 1} \frac 1n \,
    a_{-n} ( \g ) (r^{-k}s^{-l}z)^n \biggr ), \\ \label{E:ch2}
   \ch \bigl ( E_+ (\g\otimes r^ks^l, z) \bigl )
   &=& \exp\biggl ( -\sum\limits_{n\ge 1}\frac 1n  \,
    a_{-n}(\gamma)(r^{-k}s^{-l}z)^n\biggr ),  \\ \label{E:ch3}
   \ch \bigl ( H_- (\g\otimes r^ks^l , z) \bigl )
   &=& \exp \biggl ( \sum\limits_{n \ge 1}\frac 1n \,
    a_n(\g) (r^{-k}s^{-l}z)^{-n}\biggr ),         \\ \label{E:ch4}
   \ch \bigl ( E_- (\g\otimes r^ks^l , z) \bigl )
   &=& \exp\,\, \biggl ( -\sum\limits_{ n \ge 1}
    \frac 1n \,{ a_n( \g )} (r^{-k}s^{-l}z)^{ -n} \biggr ) .
  \end{eqnarray}
\end{theo}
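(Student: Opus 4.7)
My plan is to derive all four identities from Proposition~\ref{prop_exp}, using the algebraic structure of $\ch$ established in Proposition~\ref{P:isometry1} and Theorem~\ref{th_isometry}. First I would handle (\ref{E:ch1}) and (\ref{E:ch2}). By construction, $H_{-n}(\gamma \otimes r^ks^l)$ is the operator $f \mapsto Ind(\eta_n(\gamma\otimes r^ks^l)\otimes f)$, and $E_{-n}$ is the analogous operator with $\varepsilon_n$ in place of $\eta_n$. Since multiplication in $\RGC$ is by definition induction and $\ch$ is a Hopf algebra isomorphism by Proposition~\ref{P:isometry1}, we have
\[
\ch\bigl(H_{-n}(\gamma\otimes r^ks^l)(f)\bigr) = \ch\bigl(\eta_n(\gamma\otimes r^ks^l)\bigr)\cdot \ch(f),
\]
so as an operator on $\SGC$, $\ch(H_+(\gamma\otimes r^ks^l,z))$ equals multiplication by the generating series $\sum_n \ch(\eta_n(\gamma\otimes r^ks^l))z^n$. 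Substituting (\ref{eq_exp}) immediately yields (\ref{E:ch1}). For (\ref{E:ch2}), the extra factor $(-z)^n$ in the definition of $E_+$ combines with the $(-1)^{n-1}$ in (\ref{eq_sign}) to produce the overall minus sign in the exponent.

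For (\ref{E:ch3}) and (\ref{E:ch4}) I would argue by taking adjoints. By Frobenius reciprocity applied to the weighted bilinear form, and the definition of $H_n$ and $E_n$ via restriction followed by pairing against $\eta_n$ and $\varepsilon_n$, one obtains
\[
\langle H_{-n}(\gamma\otimes r^ks^l)f, g\rangle_\xi = \langle \eta_n(\gamma\otimes r^ks^l)\otimes f, Res(g)\rangle_\xi = \langle f, H_n(\gamma\otimes r^ks^l)g\rangle_\xi,
\]
with the analogous identity for $E_n$. Hence $H_n$ and $E_n$ are the $\langle\,,\,\rangle_\xi$-adjoints of $H_{-n}$ and $E_{-n}$. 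Since $\ch$ is an isometry by Theorem~\ref{th_isometry}, these adjoint relations transfer to the corresponding operators on $\SGC$. By (\ref{eq_hermit}), the $*$-adjoint of multiplication by $a_{-m}(\gamma)$ is the contraction operator $a_m(\gamma)$; under the convention $z^*=z^{-1}$ implicit in the relation $Y^-(z)=(Y^+(z^{-1}))^*$, taking the $*$-adjoint termwise of (\ref{E:ch1}) and (\ref{E:ch2}) already proved in the previous step then yields (\ref{E:ch3}) and (\ref{E:ch4}).

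The main obstacle lies in the bookkeeping: one has to verify that Frobenius reciprocity remains valid for the \emph{weighted} form $\langle\,,\,\rangle_\xi$ rather than only the standard form $\langle\,,\,\rangle_\Gamma$, which amounts to checking that the self-dual weight $\xi$ intertwines appropriately with $Ind$ and $Res$; this is inherited from the classical argument in \cite{FJW2}. One must also track carefully how the two-parameter factors $(r^{-k}s^{-l}z)^{\pm n}$ propagate through the exponential under the adjoint operation, so that the signs in the exponents of (\ref{E:ch3}) and (\ref{E:ch4}) come out correctly. Once these accounting matters are settled, each of the four formulas becomes an essentially formal consequence of Proposition~\ref{prop_exp} together with the homomorphism and isometry properties of $\ch$.
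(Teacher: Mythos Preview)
Your proposal is correct and follows essentially the same route as the paper: the first two identities are read off from Proposition~\ref{prop_exp} (together with Lemma~\ref{lem_isom}) once $H_{-n}$ and $E_{-n}$ are identified with multiplication by $\eta_n$ and $\varepsilon_n$ via the Hopf algebra isomorphism $\ch$, and the last two are obtained by taking $*$-adjoints with respect to $\langle\,,\,\rangle_\xi$ and invoking the isometry of $\ch$. Your explicit invocation of Frobenius reciprocity to justify that $H_n,E_n$ are the adjoints of $H_{-n},E_{-n}$ is a bit more detailed than the paper's ``from definition,'' but the argument is the same.
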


\begin{proof}
  The first and second identities were essentially
 established in Proposition~\ref{prop_exp} together with
 Lemma~\ref{lem_isom}, where the components are viewed as
operators acting on $\RGC$ or $\SGC$. Note that $a_n(\g\otimes
r^ks^l)=a_n(\g)r^{kn}s^{ln}$.

 We observe from definition that
 the adjoint $*$-action
of $E_+ (\g\otimes r^ks^l, z)$ and $H_- (\g\otimes r^ks^l , z)$
 with respect to the bilinear form
 $\langle \ , \ \rangle_{\wt}^{r,\, s}$
 are $E_- (\g\otimes r^{k}s^{l} , z^{-1})$ and $ H_- (\g\otimes r^{k}s^{l} , z^{-1})$
 respectively.
 The third and fourth identities are obtained by applying
 the adjoint action $*$ to the first two identities.
\end{proof}

\begin{remark} Replacing $\g$ by $-\g$ in (\ref{E:ch1}) and (\ref{E:ch3})
we obtain the equivalent formulas (\ref{E:ch2}) and (\ref{E:ch4})
respectively.
\end{remark}

Applying the characteristic map to the vertex operators $Y^{\pm}(\g,
a, b, z)$, we obtain the following group theoretical explanation of
vertex operators acting on the Fock space $\FGC$.

\begin{theo} \label{T:vertexop}
For any $\g\in\RG$ and $k\in\mathbb Z$, we have
\begin{align*}
  &Y^{+}( \g , a, b, z)\\
  &= \exp \biggl ( \sum\limits_{ n \ge 1}
  \frac 1n \, \widetilde{a}_{-n} ( \g ) z^n \biggr ) \,
  \exp \biggl ( -\sum\limits_{ n \ge 1}
  \frac 1n \,{ \widetilde{a}_n( \g)} r^{-an}s^{-bn} z^{ -n} \biggr )
  e^{ \g} z^{ \partial_{\g }}\\
&=ch(H_+(\g, z))ch(S(H_+(\g\otimes r^{a}s^{b}, z^{-1})^*)) e^{ \g}
z^{
\partial_{\g }},
\end{align*}

 \begin{align*}
  &Y^{-}( \g , a, b, z)\\
  &= \exp \biggl ( -\sum\limits_{ n \ge 1}
  \frac 1n \, \widetilde{a}_{-n} ( \g )r^{an}s^{bn} z^n \biggr ) \,
  \exp \biggl ( \sum\limits_{ n \ge 1}
  \frac 1n \,{ \widetilde{a}_n( \g)}  z^{ -n} \biggr )
  e^{ -\g} z^{-\partial_{\g }}\\
&=ch(S(H_+(\g\otimes r^{a}s^{b}, z^{-1}))) ch(H_+(\g, z)^*)e^{-\g}
z^{-\partial_{\g }}.
\end{align*}
\end{theo}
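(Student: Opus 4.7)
The plan is to transfer everything to the Fock space $\VGC = \SGC \otimes \mathbb{C}[\Rz]$ via the extended characteristic map, then evaluate each factor of $Y^{\pm}$ using the four exponential formulas in Theorem~\ref{T:characteristic}. Since $\ch$ is both a Hopf-algebra isomorphism (Proposition~\ref{P:isometry1}) and an isometry (Theorem~\ref{th_isometry}), both sides of the identities claimed in Theorem~\ref{T:vertexop} can be compared after applying $\ch$.

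Specializing (\ref{eq_vo}) to $k=l=0$ gives
$$Y^+(\gamma, a, b, z) = H_+(\gamma, z)\, E_-(\gamma \otimes r^{-a}s^{-b}, z)\, e^{\gamma} z^{\partial_\gamma}.$$
Applying $\ch$ and substituting (\ref{E:ch1}) with $(k,l)=(0,0)$ together with (\ref{E:ch4}) with $(k,l)=(-a,-b)$ produces exactly the first exponential expression of the theorem, modulo identifying the Heisenberg operators $a_{\pm n}(\gamma)$ on $\SGC$ with the group-theoretic operators $\widetilde{a}_{\pm n}(\gamma)$ on $\RGC$. That last identification is the other main input, and it is immediate from Lemma~\ref{lem_isom}: since $\ch(\sigma_n(\gamma\otimes r^k s^l)) = a_{-n}(\gamma\otimes r^k s^l)$, the defining compositions of $\widetilde{a}_{-n}$ (induction against $\sigma_n$) and $\widetilde{a}_n$ (restriction paired against $\sigma_n$ with respect to $\langle\,,\,\rangle_{\wt}$) correspond under $\ch$ to multiplication and contraction by $a_{\pm n}(\gamma)$, by the isometry of $\ch$ and its compatibility with the comultiplication on $\SGC$.

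The $Y^-$ formula follows symmetrically: either substitute (\ref{E:ch2}) and (\ref{E:ch3}) into the analogous expansion of $Y^-(\gamma, a, b, z) = E_+(\gamma \otimes r^{-a}s^{-b}, z) H_-(\gamma, z) e^{-\gamma} z^{-\partial_\gamma}$ obtained directly from the definition, or equivalently apply the adjoint $*$-action to $Y^+(\gamma, a, b, z^{-1})$ via (\ref{eq_hermit}). The alternate forms involving $S$ and $*$ are then verified by the single computation
$$\ch\bigl(E_-(\gamma \otimes r^{-a}s^{-b}, z)\bigr) = \ch\bigl(S(H_+(\gamma \otimes r^{a}s^{b}, z^{-1})^*)\bigr),$$
together with its analogue for $Y^-$: the adjoint $*$ converts $a_{-n}$ into $a_n$ while the antipode $S$ supplies the sign flip and $r,s$-dualization required to align both sides of the claimed identities.

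The hard part will be the careful bookkeeping of the $r,s$-exponents under the three operations $\ch$, $S$ and $*$, since their conventions act differently on the $R(\Cs{\times}\Cs)$-factor; once the correspondence $\widetilde{a}_n \leftrightarrow a_n$ is in place and these signs are tracked correctly, the remainder of the proof is a direct assembly of Theorem~\ref{T:characteristic}, Proposition~\ref{P:isometry1}, and Lemma~\ref{lem_isom}.
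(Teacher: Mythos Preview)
Your proposal is correct and follows essentially the same route as the paper: the paper presents Theorem~\ref{T:vertexop} as an immediate consequence of Theorem~\ref{T:characteristic} (``Applying the characteristic map to the vertex operators $Y^{\pm}(\gamma, a, b, z)$\ldots''), and your plan—substitute (\ref{E:ch1})--(\ref{E:ch4}) into the definition (\ref{eq_vo}) of $Y^{\pm}$ and identify $\widetilde{a}_{\pm n}$ with $a_{\pm n}$ via Lemma~\ref{lem_isom} and the isometry—is exactly that. Your explicit mention of the $\ch$-intertwining for $\widetilde{a}_n$ and the bookkeeping of the $r,s$-exponents under $S$ and $*$ is more detailed than what the paper writes, but there is no difference in substance.
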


We note that for $\g\in\G^*, l\in\mathbb Z$
\begin{equation}
Y^{\pm}(\g\otimes r^ks^l, a, b, z)=Y^{\pm}(\g, a, b, r^{-k}s^{-l}z).
\end{equation}

It follows from Theorem \ref{T:vertexop} that
\begin{align*}
&ch\big(Y^{+}(\g, a, b, z)\big)=X^{+}(\g, a, b, z)\\
&= \exp \biggl ( \sum\limits_{ n \ge 1}
  \frac 1n \, {a}_{-n} ( \g ) z^n \biggr )\\
 &\qquad\times \exp \biggl (-\sum\limits_{ n \ge 1}
  \frac 1n \,{ {a}_n( \g)}r^{-an}s^{-bn}  z^{ -n} \biggr )
  e^{ \g} z^{\partial_{\g }}.
\end{align*}
\begin{align*}
&ch\big(Y^{-}(\g, a, b, z)\big)=X^{-}(\g, a, b, z)\\
&= \exp \biggl ( -\sum\limits_{ n \ge 1}
  \frac 1n \, {a}_{-n} ( \g )r^{an}s^{bn} z^n \biggr )\\
 &\qquad\times \exp \biggl ( \sum\limits_{ n \ge 1}
  \frac 1n \,{ {a}_n( \g)} z^{ -n} \biggr )
  e^{ -\g} z^{-\partial_{\g }}.
\end{align*}

When $r=s^{-1}=q$ they specialize to the vertex operators $Y^{\pm}(
\g ,k, z)$
 studied in \cite{FJW2}.

Under the new variable (by identifying $a_i(n)$ with
$\widetilde{a_i}(n)$) we obtain that
\begin{align*}
&X^{+}(\g_i\otimes s^{-b}, a, b, z)\\
&=\exp \biggl ( \sum\limits_{ n \ge 1}
  \frac {a_{i} (-n )}{[n]} s^{-bn}z^n \biggr ) \,
  \exp \biggl ( -\sum\limits_{ n \ge 1}
  \frac {a_i(n)}{[n]} r^{-an} z^{ -n} \biggr )
  e^{ \g} z^{ \partial_{\g }} ,\\
&X^{-}(\g_i\otimes r^{-a}, k,  z)\\
&=\exp \biggl ( -\sum\limits_{ n \ge 1}
  \frac {a_{i}(-n )}{[n]}s^{bn} z^n \biggr ) \,
  \exp \biggl ( \sum\limits_{ n \ge 1}
  \frac {a_i(n)}{[n]} r^{an} z^{ -n} \biggr )
  e^{ -\g} z^{ -\partial_{\g }},
\end{align*}
where $[n]$ is the two-parameter quantum number (\ref{E:qnum}).%%

\section{Basic representations and the McKay
correspondence}
\label{sect_ade}

\subsection{Two-parameter quantum toroidal algebras}
\medskip
In this subsection we define the two-parameter quantum toroidal
algebras$U_{r,s}(\widehat{\widehat{\mathfrak g}})$ of simply laced
type $A, D$ or $E$. In particular the two-parameter quantum toroidal
algebra contains a special subalgebra -the two-parameter quantum
affine algebras  $U_{r,s}(\widehat{\mathfrak g})$(cf. \cite{HRZ}).

Let $(A_{ij})$ be the two-parameter quantum Cartan  $(N+1)\times
(N+1)$- Martix(cf. \cite{HZ, Z}), For type $A_{n}^{(1)}$, we have
$$A_{ij}=\left(\begin{array}{cccccc}
rs^{-1}& r^{-1}& 1 & \cdots & 1 & s \\
s & rs^{-1} & r^{-1}  & \cdots & 1 & 1\\
\cdots &\cdots &\cdots & \cdots & \cdots & \cdots\\
1 & 1 & 1  & \cdots & rs^{-1} & r^{-1}\\
 r^{-1} & 1 & 1 & \cdots & s & rs^{-1}
\end{array}\right)$$

\begin{defi}
The two-parameter quantum toroidal algebra
$U_{r,s}(\widehat{\widehat{\mathfrak g}})$ is an associative algebra
over $\mathbb{K}$ generated by the elements $x_i^{\pm}(k)$,
$a_i(m)$, $\om_i^{\pm1}$, ${\om'_i}^{\pm1}$,
$\gamma^{\pm\frac{1}{2}}$, ${\gamma'}^{\,\pm\frac{1}2}$, $D^{\pm1}$,
$D'^{\,\pm1}$, $(0\le
 i\le N$, $k,\,k' \in \mathbb{Z}$, $m,\,m' \in \mathbb{Z}\backslash
\{0\}$), subject to the following defining relations:

\medskip
\noindent $(\textrm{D1})$ \  $\gamma^{\pm\frac{1}{2}}$,
$\gamma'^{\,\pm\frac{1}{2}}$ are central with $\gamma\gamma'=rs$,
$\omega_i\,\omega_i^{-1}=\omega_j'\,\omega_j'^{\,-1}=1$ $(i, j\in
I)$, and
\begin{equation*}
\begin{split}
[\,\omega_i^{\pm 1},\omega_j^{\,\pm 1}\,]&=[\,\om_i^{\pm1},
D^{\pm1}\,]=[\,\om_j'^{\,\pm1}, D^{\pm1}\,] =[\,\om_i^{\pm1},
D'^{\pm1}\,]=0\\
&=[\,\omega_i^{\pm 1},\omega_j'^{\,\pm 1}\,]=[\,\om_j'^{\,\pm1},
D'^{\pm1}\,]=[D'^{\,\pm1}, D^{\pm1}]=[\,\omega_i'^{\pm
1},\omega_j'^{\,\pm 1}\,].
\end{split}
\end{equation*}

$$[\,a_i(m),a_j(m')\,]=\delta_{m+m',0}\frac{(rs)^{\frac{|m|}2}
(A_{ii}^{\frac{m a_{ij}}2}-A_{ii}^{-\frac{m a_{ij}}2})}{|m|(r-s)}
\cdot\frac{\gamma^{|m|}-\gamma'^{|m|}}{r-s}. \leqno(\textrm{D2})$$
$$[\,a_i(m),~\om_j^{{\pm }1}\,]=[\,\,a_i(m),~{\om'}_j^{\pm
1}\,]=0.\leqno(\textrm{D3})
$$
\begin{gather*}
D\,x_i^{\pm}(k)\,D^{-1}=r^k\, x_i^{\pm}(k), \qquad\ \
D'\,x_i^{\pm}(k)\,D'^{\,-1}=s^k\, x_i^{\pm}(k),\tag{\textrm{D4}}
\\
D\, a_i(m)\,D^{-1}=r^m\,a_i(m), \qquad\quad D'\,
a_i(m)\,D'^{\,-1}=s^m\,a_i(m).
\end{gather*}

$$
\om_i\,x_j^{\pm}(k)\, \om_i^{-1} =  A_{ji}^{\pm 1} x_j^{\pm}(k),
\qquad \om'_i\,x_j^{\pm}(k)\, \om_i'^{\,-1} = A_{ij}
^{\mp1}x_j^{\pm}(k).\leqno(\textrm{D5})
$$

$$
\begin{array}{ll}
&[\,a_i(m),x_j^{\pm}(k)\,]=\pm\frac{(rs)^{\frac{|m|}2}((rs^{-1})^{\frac{m
a_{ij}}2}-(rs^{-1})^{-\frac{m
a_{ij}}2})}{m(r-s)}\hskip3cm\vspace{12pt}\\
&\hskip3.3cm\cdot\gamma^{\pm\frac{m}2}x_j^{\pm}(m{+}k), \qquad \quad
\textit{for} \quad m<0,
\end{array} \leqno{(\textrm{D$6_1$})}
$$

$$
\begin{array}{ll}
&[\,a_i(m),x_j^{\pm}(k)\,]=\pm\frac{(rs)^{\frac{|m|}2}((rs^{-1})^{\frac{m
a_{ij}}2}-(rs^{-1})^{-\frac{m
a_{ij}}2})}{m(r-s)}\hskip3cm\vspace{12pt}\\
&\hskip3.3cm\cdot\gamma'^{\pm\frac{m}2}x_j^{\pm}(m{+}k), \qquad
\quad\textit{for} \quad m>0,
\end{array} \leqno{(\textrm{D$6_2$})}
$$

$$
\begin{array}{lll}
x_i^{\pm}(k{+}1)\,x_j^{\pm}(k') - A_{ji}^{\pm1} x_j^{\pm}(k')\,x_i^{\pm}(k{+}1)\\
=-\Bigl(A_{ji}A_{ij}^{-1}\Bigr)^{\pm\frac1{2}}\,\Bigl(x_j^{\pm}(k'{+}1)\,x_i^{\pm}(k)-A_{ij}^{\pm1}
x_i^{\pm}(k)\,x_j^{\pm}(k'{+}1)\Bigr),
\end{array}\leqno{(\textrm{D7})}
$$

$$
[\,x_i^{+}(k),~x_j^-(k')\,]=\frac{\delta_{ij}}{r-s}\Big(\gamma'^{-k}\,{\gamma}^{-\frac{k+k'}{2}}\,
\om_i(k{+}k')-\gamma^{k'}\,\gamma'^{\frac{k+k'}{2}}\,\om'_i(k{+}k')\Big),\leqno(\textrm{D8})
$$
where $\om_i(m)$, $\om'_i(-m)~(m\in \mathbb{Z}_{\geq 0})$ with
$\om_i(0)=\om_i$ and $\om'_i(0)=\om_i'$ are defined by:
\begin{gather*}
\sum\limits_{m=0}^{\infty}\om_i(m) z^{-m}=\om_i \exp \Big(
(r{-}s)\sum\limits_{\ell=1}^{\infty}
 a_i(\ell)z^{-\ell}\Big); \\
\sum\limits_{m=0}^{\infty}\om'_i(-m) z^{m}=\om'_i \exp
\Big({-}(r{-}s)
\sum\limits_{\ell=1}^{\infty}a_i(-\ell)z^{\ell}\Big),
\end{gather*}
with $\om_i(-m)=0$ and $\om'_i(m)=0, \ \forall\;m>0$.

$$x_i^{\pm}(m)x_j^{\pm}(k)=\langle j,i\rangle^{\pm1}x_j^{\pm}(k)x_i^{\pm}(m),
\qquad\ \hbox{对} \quad a_{ij}=0,\leqno(\textrm{D$9_1$})$$
$$
\begin{array}{lll}
& Sym_{m_1,\cdots
m_{n}}\sum_{k=0}^{n=1-a_{ij}}(-1)^k(r_is_i)^{\pm\frac{k(k-1)}{2}}
\Big[{1-a_{ij}\atop  k}\Big]_{\pm{i}}x_i^{\pm}(m_1)\cdots x_i^{\pm}(m_k) x_j^{\pm}(\ell)\\
&\hskip1.8cm \times x_i^{\pm}(m_{k+1})\cdots x_i^{\pm}(m_{n})=0,
\quad\hbox{对} \quad a_{ij}< 0, \quad  0\leq j<i<N,
\end{array} \leqno{(\textrm{D$9_2$})}
$$
$$
\begin{array}{lll}
& Sym_{m_1,\cdots
m_{n}}\sum_{k=0}^{n=1-a_{ij}}(-1)^k(r_is_i)^{\mp\frac{k(k-1)}{2}}
\Big[{1-a_{ij}\atop  k}\Big]_{\mp{i}}x_i^{\pm}(m_1)\cdots x_i^{\pm}(m_k) x_j^{\pm}(\ell)\\
&\hskip1.8cm \times x_i^{\pm}(m_{k+1})\cdots x_i^{\pm}(m_{n})=0,
\quad\hbox{对} \quad a_{ij}< 0, \quad  0\leq i<j<N,
\end{array} \leqno{(\textrm{D$9_3$})}
$$
where $\textit{Sym}$ denotes symmetrization with respect to the
indices $(m_1, m_2)$,
$$\begin{bmatrix} n\\
k\end{bmatrix}_{\mp}=\frac{[n]_{\mp}!}{[n-k]_{\mp}![k]_{\mp}!},$$
and the $\mp$ specifies the substitution of parameters $r, s$ by
$r^{\mp}, s^{\mp}.$

\end{defi}
The generating functions is defined by:
\begin{gather*}
x_i^{\pm}(z) = \sum_{k \in \mathbb{Z}}x_i^{\pm}(k) z^{-k}, \quad
\om_i(z) = \sum_{m \in \mathbb{Z}_+}\om_i(m) z^{-m}, \quad
  \om_i'(z) = \sum_{n \in -\mathbb{Z}_+}\om_i'(n) z^{-n}. \end{gather*}

\begin{remark} Replacing the index $0\leq i \leq N$ by
$1\leq i \leq N$  in the above definition, we get the corresponding
definition for two-parameter quantum affine algebras (cf. \cite{HZ,
Z}).
\end{remark}

In the case of typr $A$, the two-parameter quantum toroidal algebra
$U_{r,s}(\widehat{\widehat{\mathfrak g}})$ admits a further
deformation $U_{r,s,\kappa}(\widehat{\widehat{\mathfrak g}})$.  Let
$(b_{ij})$ be the skew-symmetric$(N+1)\times(N+1)-$ matrix
$$\left(\begin{array}{cccccc}
0& 1& 0 & \cdots & 0 & -1 \\
-1 & 0 & 1  & \cdots & 0& 0\\
\cdots &\cdots &\cdots & \cdots & \cdots & \cdots\\
0& 0 & 0  & \cdots & 0 & 1\\
 1 & 0 & 0 & \cdots & -1 & 0
\end{array}\right)$$

\begin{defi} Let $\kappa$ be an element of $\mathbb{K}^*$.
The two-parameter quantum toroidal algebra
$U_{r,s,\kappa}(\widehat{\widehat{\mathfrak g}})$ is an associative
algebra over $\mathbb{K}$ generated by the elements $x_i^{\pm}(k)$,
$a_i(m)$, $\om_i^{\pm1}$, ${\om'_i}^{\pm1}$,
$\gamma^{\pm\frac{1}{2}}$, ${\gamma'}^{\,\pm\frac{1}2}$,
$D_1^{\pm1}$, $D_1'^{\,\pm1}$, $D_2^{\pm1}$, $D_2'^{\,\pm1}$ $(0\le
 i\le N$, $k,\,k' \in \mathbb{Z}$, $m,\,m' \in \mathbb{Z}\backslash
\{0\},\, l, l'=1, 2$), subject to the following defining relations:

\medskip
\noindent $(\textrm{T1})$ \  $\gamma^{\pm\frac{1}{2}}$,
$\gamma'^{\,\pm\frac{1}{2}}$ are central with $\gamma\gamma'=rs$,
$\omega_i\,\omega_i^{-1}=\omega_j'\,\omega_j'^{\,-1}=1$ $(i, j\in
I)$, and
\begin{equation*}
\begin{split}
[\,\omega_i^{\pm 1},\omega_j^{\,\pm 1}\,]&=[\,\om_i^{\pm1},
D_l^{\pm1}\,]=[\,\om_j'^{\,\pm1}, D_l^{\pm1}\,] =[\,\om_i^{\pm1},
D_l'^{\pm1}\,]=0\\
&=[\,\omega_i^{\pm 1},\omega_j'^{\,\pm 1}\,]=[\,\om_j'^{\,\pm1},
D_l'^{\pm1}\,]=[D_l'^{\,\pm1}, D_{l'}^{\pm1}]=[\,\omega_i'^{\pm
1},\omega_j'^{\,\pm 1}\,].
\end{split}
\end{equation*}

$$[\,a_i(m),a_j(m')\,]=\delta_{m+m',0}\frac{(rs)^{\frac{|m|}2}
(A_{ii}^{\frac{m a_{ij}}2}-A_{ii}^{-\frac{m a_{ij}}2})}{|m|(r-s)}
\cdot\frac{\gamma^{|m|}-\gamma'^{|m|}}{r-s}\kappa^{mb_{ij}}.
\leqno(\textrm{T2})$$
$$[\,a_i(m),~\om_j^{{\pm }1}\,]=[\,\,a_i(m),~{\om'}_j^{\pm
1}\,]=0.\leqno(\textrm{T3})
$$
\begin{gather*}
D_1\,x_i^{\pm}(k)\,D_1^{-1}=r^k\, x_i^{\pm}(k), \qquad\ \
D_1'\,x_i^{\pm}(k)\,D_1'^{\,-1}=s^k\, x_i^{\pm}(k),\tag{\textrm{T4}}
\\
D_1\, a_i(m)\,D_1^{-1}=r^m\,a_i(m), \qquad\quad D_1'\,
a_i(m)\,D_1'^{\,-1}=s^m\,a_i(m),\\
D_2\,x_i^{\pm}(k)\,D_2^{-1}=r^{\pm \delta_{i0}}\, x_i^{\pm}(k),
\qquad\ \ D_2'\,x_i^{\pm}(k)\,D_2'^{\,-1}=s^{\pm \delta_{i0}}\,
x_i^{\pm}(k),\\
D_2\, a_i(m)\,D_2^{-1}=a_i(m), \qquad\quad D_2'\,
a_i(m)\,D_2'^{\,-1}=a_i(m).
\end{gather*}

$$
\om_i\,x_j^{\pm}(k)\, \om_i^{-1} =  A_{ji}^{\pm 1} x_j^{\pm}(k),
\qquad \om'_i\,x_j^{\pm}(k)\, \om_i'^{\,-1} = A_{ij}
^{\mp1}x_j^{\pm}(k).\leqno(\textrm{T5})
$$
$$
\begin{array}{ll}
&[\,a_i(m),x_j^{\pm}(k)\,]=\pm\frac{(rs)^{\frac{|m|}2}((rs^{-1})^{\frac{m
a_{ij}}2}-(rs^{-1})^{-\frac{m
a_{ij}}2})}{m(r-s)}\hskip3cm\vspace{12pt}\\
&\hskip3.3cm\cdot\gamma^{\pm\frac{m}2}\kappa^{mb_{ij}}x_j^{\pm}(m{+}k),
\qquad \quad \textit{for} \quad m<0,
\end{array} \leqno{(\textrm{T$6_1$})}
$$

$$
\begin{array}{ll}
&[\,a_i(m),x_j^{\pm}(k)\,]=\pm\frac{(rs)^{\frac{|m|}2}((rs^{-1})^{\frac{m
a_{ij}}2}-(rs^{-1})^{-\frac{m
a_{ij}}2})}{m(r-s)}\hskip3cm\vspace{12pt}\\
&\hskip3.3cm\cdot\gamma'^{\pm\frac{m}2}\kappa^{mb_{ij}}x_j^{\pm}(m{+}k),
\qquad \quad\textit{for} \quad m>0,
\end{array} \leqno{(\textrm{T$6_2$})}
$$

\begin{equation*}
\begin{split}
\Big(\kappa^{b_{ij}} z-(A_{ij}A_{ji})^{\pm \frac{1}{2}}w\Big)\,x_i^{\pm}(z)x_j^{\pm}(w)\hskip4cm\\
=\Big(\kappa^{b_{ij}}A_{ji}^{\pm 1} z-(A_{ji}A_{ij}^{-1})^{\pm
\frac{1}{2}}w\Big)\,x_j^{\pm}(w)\,x_i^{\pm}(z).
\end{split}\tag{\textrm{T7}}
\end{equation*}

$$
[\,x_i^{+}(k),~x_j^-(k')\,]=\frac{\delta_{ij}}{r-s}\Big(\gamma'^{-k}\,{\gamma}^{-\frac{k+k'}{2}}\,
\om_i(k{+}k')-\gamma^{k'}\,\gamma'^{\frac{k+k'}{2}}\,\om'_i(k{+}k')\Big),\leqno(\textrm{T8})
$$
where $\om_i(m)$, $\om'_i(-m)~(m\in \mathbb{Z}_{\geq 0})$ with
$\om_i(0)=\om_i$ and $\om'_i(0)=\om_i'$ are defined by:
\begin{gather*}
\sum\limits_{m=0}^{\infty}\om_i(m) z^{-m}=\om_i \exp \Big(
(r{-}s)\sum\limits_{\ell=1}^{\infty}
 a_i(\ell)z^{-\ell}\Big); \\
\sum\limits_{m=0}^{\infty}\om'_i(-m) z^{m}=\om'_i \exp
\Big({-}(r{-}s)
\sum\limits_{\ell=1}^{\infty}a_i(-\ell)z^{\ell}\Big),
\end{gather*}
with $\om_i(-m)=0$ and $\om'_i(m)=0, \ \forall\;m>0$.

$$x_i^{\pm}(m)x_j^{\pm}(k)=\langle j,i\rangle^{\pm1}x_j^{\pm}(k)x_i^{\pm}(m),
\qquad\ \hbox{对} \quad a_{ij}=0,\leqno(\textrm{T$9_1$})$$
$$
\begin{array}{lll}
& Sym_{m_1,\cdots
m_{n}}\sum_{k=0}^{n=1-a_{ij}}(-1)^k(r_is_i)^{\pm\frac{k(k-1)}{2}}
\Big[{1-a_{ij}\atop  k}\Big]_{\pm{i}}x_i^{\pm}(m_1)\cdots x_i^{\pm}(m_k) x_j^{\pm}(\ell)\\
&\hskip1.8cm \times x_i^{\pm}(m_{k+1})\cdots x_i^{\pm}(m_{n})=0,
\quad\hbox{对} \quad a_{ij}< 0, \quad  0\leq j<i<N,
\end{array} \leqno{(\textrm{T$9_2$})}
$$
$$
\begin{array}{lll}
& Sym_{m_1,\cdots
m_{n}}\sum_{k=0}^{n=1-a_{ij}}(-1)^k(r_is_i)^{\mp\frac{k(k-1)}{2}}
\Big[{1-a_{ij}\atop  k}\Big]_{\mp{i}}x_i^{\pm}(m_1)\cdots x_i^{\pm}(m_k) x_j^{\pm}(\ell)\\
&\hskip1.8cm \times x_i^{\pm}(m_{k+1})\cdots x_i^{\pm}(m_{n})=0,
\quad\hbox{对} \quad a_{ij}<0, \quad  0\leq i<j<N.
\end{array} \leqno{(\textrm{T$9_3$})}
$$
$\textit{Sym}$ denotes symmetrization with respect to the indices
$(m_1, m_2)$.
\end{defi}

\begin{remark} Assume that $r=s^{-1}=q$, $U_{r,s,\kappa}(\widehat{\widehat{\mathfrak g}})$
is the one-parameter quantum toroidal algebras $U_{q,
\kappa}(\widehat{\widehat{\mathfrak g}})$ (cf. \cite{GKV, VV}).
\end{remark}

\subsection{A new form of McKay correspondence}

 In this subsection we let $\G$ to be a finite
subgroup of $SU_2$ and consider two distinguished
choices of the class function $\wt$ in $\RGC$ introduced in
Sect. \ref{S:Mcweights}.

First we consider
$$\wt=\g_0\otimes ((rs^{-1})^{\frac{1}{2}}+(r^{-1}s)^{\frac{1}{2}})  - \pi\otimes 1_{\Cs},
$$
where
$\pi$ is the character of the two-dimensional natural representation
of $\G$ in $SU_2$.

The Heisenberg algebra in this case has the following relations (cf.
Prop. \ref{prop_orth} and (\ref{E:qcartan1})).
\begin{equation}\label{E:heisen1}
[a_m(\g_i), a_n(\g_j)]=
\begin{cases}
m\delta_{m, -n}((rs^{-1})^{\frac{m}{2}}+(r^{-1}s)^{\frac{m}{2}})C, & i=j\\
m\delta_{m, -n}a_{ij}^1C, & i\neq j
\end{cases},
\end{equation}
where $a_{ij}^1$ are the entries of the affine Cartan matrix of ADE type
(see (\ref{E:qcartan}) at $d=2$).

Recall that the matrix $A^{1,\,1} = (\langle \g_i,
\g_j\rangle_{\wt}^1) =(a_{ij}^1)_{0 \leq i,j \leq N}$ is the Cartan
matrix for the corresponding affine Lie algebra \cite{Mc}. In
particular $a_{ii}^1 =2$; $a_{ij}^1 =0$ or $-1$ when $i \neq j$ and
$\G \neq \mathbb Z / 2\mathbb Z$. In the case of $\G = \mathbb Z /
2\mathbb Z$, $a_{01}^1 =a_{10}^1= -2$. Let $\mathfrak g$ (resp.
$\hat{\mathfrak g}$) be the corresponding simple Lie algebra (resp.
affine Lie algebra ) associated to the Cartan matrix
$(a_{ij}^1)_{1\leq i, j\leq N}$ (resp. $A$). Note that the lattice
$\Rz$ is even in this case.

We define the normal ordered product of vertex operators
as follows.
\begin{align*}
&:Y^+(\g_i, a, b, z)Y^+(\g_j, a', b', w):\\
=&H_+(\g_i, z)H(\g_j,w)S(H_+(\g_i\otimes r^as^b, z^{-1})^*
H_+(\g_j\otimes r^{a'}s^{b'}, w^{-1})^*)\\
&\times e^{\g_i+\g_j}z^{\partial_{\g_i}}
w^{\partial_{\g_j}},\\
&:Y^+(\g_i, a, b, z)Y^-(\g_j, a', b', w):\\
=&H_+(\g_i, z)H(-\g_j\otimes r^{b'}s^{a'}, w)S(H_+(\g_i\otimes
r^as^b, z^{-1})^*
H_+(-\g_j\otimes r^{a'}s^{b'}, w^{-1})^*)\\
&\times e^{\g_i-\g_j}z^{\partial_{\g_i}}
w^{-\partial_{\g_j}}.
\end{align*}
Other normal ordered products are defined similarly.

The identities in the following theorems are understood as usual by
means of correlation functions (cf. e.g. \cite{FJ, J1, HZ, Z}).

\begin{theorem}  \label{th_ope}
Let $\xi=\g_0\otimes
((rs^{-1})^{\frac{1}{2}}+(r^{-1}s)^{\frac{1}{2}})-\pi\otimes
1_{\Cs}$.
  Then the  vertex operators
$Y^{\pm}( \g_i, a, b, z), Y^{\pm}(-\g_j, a, b, z)$,
 $\g_i\in  \G^*, k\in\mathbb Z$
acting on the group theoretically defined Fock space
$\FGC$
 satisfy the following relations.
\begin{eqnarray*}
 && Y^{+}(\g_i, a, b, s^{-b}z) Y^{+}(\g_j, a, b, s^{-b}w)\\
&&\qquad= \ep (\g_i, \g_j)
  :Y^{+}(\g_i, a, b, s^{-b}z) Y^{+}(\g_j, a, b, s^{-b}w):\\
&&\qquad\times\left\{
  \begin{array}{cc}
1& \mbox{ $\langle \g_i, \g_j\rangle_{\xi}^1=0$}\\
(z-r^{-a}s^{-b}w)^{-1}& \mbox{ $\langle \g_i, \g_j\rangle_{\xi}^1=-1$}\\
(z-r^{-a}s^{-b}(rs^{-1})^{\frac{1}{2}}w)(z-r^{-a}s^{-b}(r^{-1}s)^{\frac{1}{2}}w)&
\mbox{ $\langle \g_i, \g_j\rangle_{\xi}^1=2$}
\end{array},
\right.
 \end{eqnarray*}

\begin{eqnarray*}
 && Y^{-}(\g_i, a, b, r^{-a}z) Y^{-}(\g_j, a, b, r^{-a}w)\\
&&\qquad= \ep (\g_i, \g_j)^{-1}
  :Y^{-}(\g_i, a, b, r^{-a}z) Y^{-}(\g_j, a, b, r^{-a}w):\\
&&\qquad\times\left\{
  \begin{array}{cc}
1& \mbox{ $\langle \g_i, \g_j\rangle_{\xi}^1=0$}\\
(z-r^{a}s^{b}w)^{-1}& \mbox{ $\langle \g_i, \g_j\rangle_{\xi}^1=-1$}\\
(z-r^{a}s^{b}(rs^{-1})^{\frac{1}{2}}w)(z-r^{a}s^{b}(r^{-1}s)^{\frac{1}{2}}w)&
\mbox{ $\langle \g_i, \g_j\rangle_{\xi}^1=2$}
\end{array},
\right.
 \end{eqnarray*}

\begin{eqnarray*}
 && Y^{+}(\g_i, a, b, s^{-b}z) Y^{-}(\g_j, a, b,  r^{-a}w) \\
&&\qquad= \ep (\g_i, \g_j)
  :Y^{+}(\g_i, a, b, s^{-b}z) Y^{-}(\g_j, a, b, r^{-a}w):\\
&&\qquad\times\left\{
  \begin{array}{cc}
1& \mbox{ $\langle \g_i, \g_j\rangle_{\xi}^1=0$}\\
(z-r^{-a}s^{b}w) & \mbox{ $\langle \g_i, \g_j\rangle_{\xi}^1=-1$}\\
(z-r^{-a}s^{b}(rs^{-1})^{\frac{1}{2}}w)(z-r^{-a}s^{b}(r^{-1}s)^{\frac{1}{2}}w)&
\mbox{ $\langle \g_i, \g_j\rangle_{\xi}^1=2$}
\end{array},
\right.
 \end{eqnarray*}

\begin{eqnarray*}
 && Y^{-}(\g_i, a, b, r^{-a}z) Y^{+}(\g_j, a, b,  s^{-b}w) \\
&&\qquad= \ep (\g_i, \g_j)^{-1}
  :Y^{-}(\g_i, a, b, r^{-a}z) Y^{+}(\g_j, a, b, s^{-b}w):\\
&&\qquad\times\left\{
  \begin{array}{cc}
1& \mbox{ $\langle \g_i, \g_j\rangle_{\xi}^1=0$}\\
(z-r^{a}s^{-b}w) & \mbox{ $\langle \g_i, \g_j\rangle_{\xi}^1=-1$}\\
(z-r^{a}s^{-b}(rs^{-1})^{\frac{1}{2}}w)(z-r^{a}s^{-b}(r^{-1}s)^{\frac{1}{2}}w)&
\mbox{ $\langle \g_i, \g_j\rangle_{\xi}^1=2$}
\end{array},
\right.
 \end{eqnarray*}

\begin{eqnarray*}
 && Y^{+}(-\g_i, -a, -b, s^{-b}z) Y^{+}(-\g_j, -a, -b, s^{-b}w)\\
&&\qquad= \ep (\g_i, \g_j)
  :Y^{+}(-\g_i, -a, -b, s^{-b}z) Y^{+}(-\g_j, -a, -b, s^{-b}w):\\
&&\qquad\times\left\{
  \begin{array}{cc}
1& \mbox{ $\langle \g_i, \g_j\rangle_{\xi}^1=0$}\\
(z-r^{-a}s^{-b}w)^{-1}& \mbox{ $\langle \g_i, \g_j\rangle_{\xi}^1=-1$}\\
(z-r^{-a}s^{-b}(rs^{-1})^{\frac{1}{2}}w)(z-r^{-a}s^{-b}(r^{-1}s)^{\frac{1}{2}}w)&
\mbox{ $\langle \g_i, \g_j\rangle_{\xi}^1=2$}
\end{array},
\right.
 \end{eqnarray*}

\begin{eqnarray*}
 && Y^{-}(-\g_i, -a, -b, r^{-a}z) Y^{+}(-\g_j, -a, -b, r^{-a}w)\\
&&\qquad= \ep (\g_i, \g_j)^{-1}
  :Y^{-}(-\g_i, -a, -b, r^{-a}z) Y^{+}(-\g_j, -a, -b, r^{-a}w):\\
&&\qquad\times\left\{
  \begin{array}{cc}
1& \mbox{ $\langle \g_i, \g_j\rangle_{\xi}^1=0$}\\
(z-r^{a}s^{b}w)^{-1}& \mbox{ $\langle \g_i, \g_j\rangle_{\xi}^1=-1$}\\
(z-r^{a}s^{b}(rs^{-1})^{\frac{1}{2}}w)(z-r^{a}s^{b}(r^{-1}s)^{\frac{1}{2}}w)&
\mbox{ $\langle \g_i, \g_j\rangle_{\xi}^1=2$}
\end{array},
\right.
 \end{eqnarray*}

\begin{eqnarray*}
 && Y^{+}(-\g_i, -a, -b, s^{-b}z) Y^{-}(-\g_j, -a, -b,  r^{-a}w) \\
&&\qquad= \ep (\g_i, \g_j)
  :Y^{+}(-\g_i, -a, -b, s^{-b}z) Y^{-}(-\g_j, -a, -b, r^{-a}w):\\
&&\qquad\times\left\{
  \begin{array}{cc}
1& \mbox{ $\langle \g_i, \g_j\rangle_{\xi}^1=0$}\\
(z-r^{-a}s^{b}w) & \mbox{ $\langle \g_i, \g_j\rangle_{\xi}^1=-1$}\\
(z-r^{-a}s^{b}(rs^{-1})^{\frac{1}{2}}w)(z-r^{-a}s^{b}(r^{-1}s)^{\frac{1}{2}}w)&
\mbox{ $\langle \g_i, \g_j\rangle_{\xi}^1=2$}
\end{array},
\right.
 \end{eqnarray*}

\begin{eqnarray*}
 && Y^{-}(-\g_i, -a, -b, r^{-a}z) Y^{+}(-\g_j, -a, -b,  s^{-b}w) \\
&&\qquad= \ep (\g_i, \g_j)^{-1}
  :Y^{-}(-\g_i, -a, -b, r^{-a}z) Y^{+}(-\g_j, -a, -b, s^{-b}w):\\
&&\qquad\times\left\{
  \begin{array}{cc}
1& \mbox{ $\langle \g_i, \g_j\rangle_{\xi}^1=0$}\\
(z-r^{a}s^{-b}w) & \mbox{ $\langle \g_i, \g_j\rangle_{\xi}^1=-1$}\\
(z-r^{a}s^{-b}(rs^{-1})^{\frac{1}{2}}w)(z-r^{a}s^{-b}(r^{-1}s)^{\frac{1}{2}}w)&
\mbox{ $\langle \g_i, \g_j\rangle_{\xi}^1=2$}
\end{array},
\right.
 \end{eqnarray*}

\end{theorem}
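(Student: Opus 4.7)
The plan is to apply the characteristic map of Theorem \ref{T:characteristic} to transfer the claimed operator identities to equivalent identities on the Fock space $\VGC = \SGC \otimes \mathbb C[\Rz]$ involving the Drinfeld-type vertex operators $X^\pm(\gamma,a,b,z)$ displayed at the end of Section~\ref{sect_vertex}, for which a standard BCH-type calculation becomes available. Each $X^{\pm}$ factors as the product of a ``creation half'' $\exp\bigl(\sum_{n>0}\tfrac1n a_{-n}(\pm\gamma)\,z^n\cdot(\text{shift})\bigr)$, an ``annihilation half'' $\exp\bigl(-\sum_{n>0}\tfrac1n a_n(\pm\gamma)\,z^{-n}\cdot(\text{shift})\bigr)$, and a zero-mode factor $e^{\pm\gamma}z^{\pm\partial_\gamma}$.

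The core computation is to move the annihilation half of the left-hand vertex operator past the creation half of the right-hand one using the Baker--Campbell--Hausdorff identity $e^A e^B = e^{[A,B]}e^B e^A$, which applies since $[A,B]$ is a scalar. Inserting the Heisenberg commutation relations \eqref{E:heisen1} (a consequence of Proposition~\ref{prop_orth}), the commutator produces the series $\sum_{n\ge 1}\tfrac1n\langle\gamma_i,\gamma_j\rangle_\xi^{r^n,s^n}(r^{-a}s^{-b}w/z)^n$, which I then evaluate in the three cases permitted by the ADE hypothesis: $\langle\gamma_i,\gamma_j\rangle_\xi^1=0$ (trivial), $=-1$ (giving $-\log(1-r^{-a}s^{-b}w/z)$, hence the factor $(z-r^{-a}s^{-b}w)^{-1}$ after combining with the zero-mode contribution), and $=2$ with $i=j$ (where the class function $\xi$ forces the splitting $(rs^{-1})^{n/2}+(r^{-1}s)^{n/2}$, producing a logarithm of a product of two linear factors).

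The remaining contributions come from the zero modes. Commuting $z^{\partial_{\gamma_i}}$ past $e^{\gamma_j}$ yields a factor $z^{\langle\gamma_i,\gamma_j\rangle_\xi^1}$ which promotes each $(1-\alpha w/z)^\nu$ to $(z-\alpha w)^\nu$, while the $\ep$-twisted product $e^{\gamma_i}e^{\gamma_j}=\ep(\gamma_i,\gamma_j)e^{\gamma_i+\gamma_j}$ supplies the cocycle prefactor on the right-hand side. The remaining seven clauses ($Y^-Y^-$, $Y^+Y^-$, $Y^-Y^+$, and their counterparts obtained by replacing $\gamma$ by $-\gamma$) are handled identically; several of them can in fact be deduced from the $Y^+Y^+$ case by taking the adjoint $*$-action (with the convention $r^*=s$) already used in Theorem~\ref{T:vertexop}, which interchanges $Y^+$ and $Y^-$ and swaps the roles of $a$ and $b$.

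The main obstacle will be the careful bookkeeping of the shift factors $r^{\pm a},s^{\pm b}$ that appear in each clause, and confirming that the scalar produced by the BCH step, once combined with the zero-mode contributions and the arguments $s^{-b}z$, $r^{-a}w$, etc., really matches the stated rational function of $z,w$. For the diagonal case the pivotal identity to verify is that $(1-(rs^{-1})^{1/2}\alpha)(1-(r^{-1}s)^{1/2}\alpha)$ arises from exponentiating $-\sum_{n\ge 1}\tfrac1n[(rs^{-1})^{n/2}+(r^{-1}s)^{n/2}]\alpha^n$; this is the two-parameter analog of the classical $(1-q\alpha)(1-q^{-1}\alpha)$ that appeared in \cite{FJW2}, and it is precisely what records the refinement from a single parameter $q$ to the pair $(r,s)$.
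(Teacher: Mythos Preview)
Your approach is correct and is precisely the standard argument the paper has in mind: the paper does not spell out a proof of this theorem at all, but merely states beforehand that ``the identities in the following theorems are understood as usual by means of correlation functions (cf.\ e.g.\ \cite{FJ, J1, HZ, Z}),'' which is exactly the BCH/normal-ordering computation you describe. Your plan---transfer via the characteristic map to $X^\pm$ on $\VGC$, commute annihilation past creation using the Heisenberg relations \eqref{E:heisen1}, exponentiate the resulting logarithmic series in the three ADE cases, and collect the zero-mode contributions $z^{\langle\gamma_i,\gamma_j\rangle_\xi^1}$ and $\epsilon(\gamma_i,\gamma_j)$---is the right one and matches the paper's intent.
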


\begin{remark} Replacing the vertex operator $Y^{\pm}$ by
$X^{\pm}$ via the characteristic map $ch$ in the above formulas, we
get the corresponding formulas for vertex operators $X^{\pm}(\g, a,
b, z)$ acting on $\VGC$.
\end{remark}

Now we consider the second distinguished class function
$$\wt^{r,\,s,\,\kappa}
=\g_0\otimes
((rs^{-1})^{\frac{1}{2}}+(r^{-1}s)^{\frac{1}{2}})-(\g_1\otimes
\kappa+\g_{r}\otimes \kappa^{-1}),$$ when $\G$ is a cyclic group of
order $N+1$.

In this case the Heisenberg algebra (\ref{E:heisen}) has the
following relations according to Prop. \ref{prop_orth}:
\begin{equation}\label{E:heisen1}
[a_m(\g_i), a_n(\g_j)]=
\begin{cases}
m\delta_{m, -n}((rs^{-1})^{\frac{m}{2}}+(r^{-1}s)^{\frac{m}{2}})\kappa^{mb_{ij}}C, & i=j\\
m\delta_{m, -n}a_{ij}^1\kappa^{mb_{ij}}C, & i\neq j
\end{cases},
\end{equation}
where $a_{ij}^1$ are the entries of the affine Cartan matrix of type
A and $r\geq 2$. This is the same Heisenberg subalgebra ($c=1$) in
$U_{r,s}(\widehat{\widehat{\mathfrak g}})$ provided that we identify
$$a_i(n)=\frac{[n]}na_n(\g_i).
$$

Recall that $(b_{ij})$ is the skew-symmetric matrix. We need to
slightly modify the definition of the middle term in the vertex
operators. For each $i=0, 1, \ldots, N$ we define the modified
operator $z^{\partial_{\g, \kappa}}$ on the group algebra $\mathbb
C[\Rz]$ by
\begin{equation}
z^{\partial_{\g_i, \kappa}}e^{\beta} =z^{\langle\g_i,
\beta\rangle_{\xi}^1} \kappa^{-\frac 12\sum_{j=1}^r\langle \g_i,
m_j\g_j\rangle_{\xi}^1b_{ij}}e^{\beta},
\end{equation}
where $\beta=\sum_{j}m_j\g_j\in \Rz$.

We then replace the operator $z^{\pm\partial_{\g_i}}$ in the
definition of the vertex operators $Y^{\pm}(\g_i, a, b, z)$ by the
operator $z^{\pm\partial_{\g_i, \kappa}}$. The formulas in Theorems
\ref{T:vertexop} remain true after the term $z^{\pm \partial}$
appearing in the formulas are modified accordingly.

The proof of the following theorem is similar to that of Theorem
\ref{th_ope}.

\begin{theorem}  \label{th_ope1}
Let $\G$ be a cyclic group of order $r+1$ and let
$\wt^{r,\,s,\,\kappa} =\g_0\otimes
((rs^{-1})^{\frac{1}{2}}+(r^{-1}s)^{\frac{1}{2}})-(\g_1\otimes
\kappa+\g_{r}\otimes \kappa^{-1}),$.
  The  vertex operators
$Y^{\pm}( \g_i, a, b, z)$ and $Y^{\pm}(-\g_i, a, b,  z),
 \g_i\in  \G^*$ acting on the group theoretically defined Fock space
$\FGC$
 satisfy the following relations.

\begin{eqnarray*}
 && Y^{+}(\g_i, a, b, s^{-b}z) Y^{+}(\g_j, a, b, s^{-b}w)\\
&&\qquad= \ep (\g_i, \g_j)
  :Y^{+}(\g_i, a, b, s^{-b}z) Y^{+}(\g_j, a, b, s^{-b}w):\\
&&\qquad\times\left\{
  \begin{array}{cc}
1& \mbox{ $\langle \g_i, \g_j\rangle_{\xi}^1=0$}\\
\kappa^{-\frac{1}{2}b_{ij}}(z-r^{-a}s^{-b}\kappa^{b_{ij}}w)^{-1}& \mbox{ $\langle \g_i, \g_j\rangle_{\xi}^1=-1$}\\
(z-r^{-a}s^{-b}(rs^{-1})^{\frac{1}{2}}w)(z-r^{-a}s^{-b}(r^{-1}s)^{\frac{1}{2}}w)&
\mbox{ $\langle \g_i, \g_j\rangle_{\xi}^1=2$}
\end{array},
\right.
 \end{eqnarray*}

\begin{eqnarray*}
 && Y^{-}(\g_i, a, b, r^{-a}z) Y^{-}(\g_j, a, b, r^{-a}w)\\
&&\qquad= \ep (\g_i, \g_j)^{-1}
  :Y^{-}(\g_i, a, b, r^{-a}z) Y^{-}(\g_j, a, b, r^{-a}w):\\
&&\qquad\times\left\{
  \begin{array}{cc}
1& \mbox{ $\langle \g_i, \g_j\rangle_{\xi}^1=0$}\\
\kappa^{-\frac{1}{2}b_{ij}}(z-r^{a}s^{b}\kappa^{b_{ij}}w)^{-1}& \mbox{ $\langle \g_i, \g_j\rangle_{\xi}^1=-1$}\\
(z-r^{a}s^{b}(rs^{-1})^{\frac{1}{2}}w)(z-r^{a}s^{b}(r^{-1}s)^{\frac{1}{2}}w)&
\mbox{ $\langle \g_i, \g_j\rangle_{\xi}^1=2$}
\end{array},
\right.
 \end{eqnarray*}

\begin{eqnarray*}
 && Y^{+}(\g_i, a, b, s^{-b}z) Y^{-}(\g_j, a, b,  r^{-a}w) \\
&&\qquad= \ep (\g_i, \g_j)
  :Y^{+}(\g_i, a, b, s^{-b}z) Y^{-}(\g_j, a, b, r^{-a}w):\\
&&\qquad\times\left\{
  \begin{array}{cc}
1& \mbox{ $\langle \g_i, \g_j\rangle_{\xi}^1=0$}\\
\kappa^{-\frac{1}{2}b_{ij}}(z-r^{-a}s^{b}\kappa^{b_{ij}}w) & \mbox{ $\langle \g_i, \g_j\rangle_{\xi}^1=-1$}\\
(z-r^{-a}s^{b}(rs^{-1})^{\frac{1}{2}}w)(z-r^{-a}s^{b}(r^{-1}s)^{\frac{1}{2}}w)&
\mbox{ $\langle \g_i, \g_j\rangle_{\xi}^1=2$}
\end{array},
\right.
 \end{eqnarray*}

\begin{eqnarray*}
 && Y^{-}(\g_i, a, b, r^{-a}z) Y^{+}(\g_j, a, b,  s^{-b}w) \\
&&\qquad= \ep (\g_i, \g_j)^{-1}
  :Y^{-}(\g_i, a, b, r^{-a}z) Y^{+}(\g_j, a, b, s^{-b}w):\\
&&\qquad\times\left\{
  \begin{array}{cc}
1& \mbox{ $\langle \g_i, \g_j\rangle_{\xi}^1=0$}\\
\kappa^{-\frac{1}{2}b_{ij}}(z-r^{a}s^{-b}\kappa^{b_{ij}}w) & \mbox{ $\langle \g_i, \g_j\rangle_{\xi}^1=-1$}\\
(z-r^{a}s^{-b}(rs^{-1})^{\frac{1}{2}}w)(z-r^{a}s^{-b}(r^{-1}s)^{\frac{1}{2}}w)&
\mbox{ $\langle \g_i, \g_j\rangle_{\xi}^1=2$}
\end{array},
\right.
 \end{eqnarray*}

\begin{eqnarray*}
 && Y^{+}(-\g_i, -a, -b, s^{-b}z) Y^{+}(-\g_j, -a, -b, s^{-b}w)\\
&&\qquad= \ep (\g_i, \g_j)
  :Y^{+}(-\g_i, -a, -b, s^{-b}z) Y^{+}(-\g_j, -a, -b, s^{-b}w):\\
&&\qquad\times\left\{
  \begin{array}{cc}
1& \mbox{ $\langle \g_i, \g_j\rangle_{\xi}^1=0$}\\
\kappa^{-\frac{1}{2}b_{ij}}(z-r^{-a}s^{-b}\kappa^{b_{ij}}w)^{-1}& \mbox{ $\langle \g_i, \g_j\rangle_{\xi}^1=-1$}\\
(z-r^{-a}s^{-b}(rs^{-1})^{\frac{1}{2}}w)(z-r^{-a}s^{-b}(r^{-1}s)^{\frac{1}{2}}w)&
\mbox{ $\langle \g_i, \g_j\rangle_{\xi}^1=2$}
\end{array},
\right.
 \end{eqnarray*}

\begin{eqnarray*}
 && Y^{-}(-\g_i, -a, -b, r^{-a}z) Y^{+}(-\g_j, -a, -b, r^{-a}w)\\
&&\qquad= \ep (\g_i, \g_j)^{-1}
  :Y^{-}(-\g_i, -a, -b, r^{-a}z) Y^{+}(-\g_j, -a, -b, r^{-a}w):\\
&&\qquad\times\left\{
  \begin{array}{cc}
1& \mbox{ $\langle \g_i, \g_j\rangle_{\xi}^1=0$}\\
\kappa^{-\frac{1}{2}b_{ij}}(z-r^{a}s^{b}\kappa^{b_{ij}}w)^{-1}& \mbox{ $\langle \g_i, \g_j\rangle_{\xi}^1=-1$}\\
(z-r^{a}s^{b}(rs^{-1})^{\frac{1}{2}}w)(z-r^{a}s^{b}(r^{-1}s)^{\frac{1}{2}}w)&
\mbox{ $\langle \g_i, \g_j\rangle_{\xi}^1=2$}
\end{array},
\right.
 \end{eqnarray*}

\begin{eqnarray*}
 && Y^{+}(-\g_i, -a, -b, s^{-b}z) Y^{-}(-\g_j, -a, -b,  r^{-a}w) \\
&&\qquad= \ep (\g_i, \g_j)
  :Y^{+}(-\g_i, -a, -b, s^{-b}z) Y^{-}(-\g_j, -a, -b, r^{-a}w):\\
&&\qquad\times\left\{
  \begin{array}{cc}
1& \mbox{ $\langle \g_i, \g_j\rangle_{\xi}^1=0$}\\
\kappa^{-\frac{1}{2}b_{ij}}(z-r^{-a}s^{b}\kappa^{b_{ij}}w) & \mbox{ $\langle \g_i, \g_j\rangle_{\xi}^1=-1$}\\
(z-r^{-a}s^{b}(rs^{-1})^{\frac{1}{2}}w)(z-r^{-a}s^{b}(r^{-1}s)^{\frac{1}{2}}w)&
\mbox{ $\langle \g_i, \g_j\rangle_{\xi}^1=2$}
\end{array},
\right.
 \end{eqnarray*}

\begin{eqnarray*}
 && Y^{-}(-\g_i, -a, -b, r^{-a}z) Y^{+}(-\g_j, -a, -b,  s^{-b}w) \\
&&\qquad= \ep (\g_i, \g_j)^{-1}
  :Y^{-}(-\g_i, -a, -b, r^{-a}z) Y^{+}(-\g_j, -a, -b, s^{-b}w):\\
&&\qquad\times\left\{
  \begin{array}{cc}
1& \mbox{ $\langle \g_i, \g_j\rangle_{\xi}^1=0$}\\
\kappa^{-\frac{1}{2}b_{ij}}(z-r^{a}s^{-b}\kappa^{b_{ij}}w) & \mbox{ $\langle \g_i, \g_j\rangle_{\xi}^1=-1$}\\
(z-r^{a}s^{-b}(rs^{-1})^{\frac{1}{2}}w)(z-r^{a}s^{-b}(r^{-1}s)^{\frac{1}{2}}w)&
\mbox{ $\langle \g_i, \g_j\rangle_{\xi}^1=2$}
\end{array},
\right.
 \end{eqnarray*}

\end{theorem}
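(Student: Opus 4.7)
The plan is to follow the same structure as the proof of Theorem~\ref{th_ope}, computing each operator product expansion directly from the explicit exponential realizations in Theorem~\ref{T:vertexop}, but carefully tracking how the $\kappa$-deformation enters in two places: the Heisenberg contractions and the action of the modified zero-mode operator $z^{\partial_{\g_i,\kappa}}$. Concretely, I would first write each vertex operator $Y^{\pm}(\pm\g_i,a,b,z)$ as $\Phi^{\pm}_-(z)\,\Phi^{\pm}_+(z)\,e^{\pm\g_i}z^{\pm\partial_{\g_i,\kappa}}$ where $\Phi^{\pm}_{\mp}$ are the exponentials of positive/negative Heisenberg modes appearing in (\ref{E:ch1})--(\ref{E:ch4}), and then move every annihilation piece past every creation piece using the BCH-type identity $e^A e^B = e^{[A,B]}:e^A e^B:$, valid because all commutators of these Heisenberg exponentials are scalars.

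The key computation reduces to evaluating $[a_m(\g_i),a_{-m}(\g_j)]$ under the new Heisenberg relations (\ref{E:heisen1}). In the off-diagonal case $\langle\g_i,\g_j\rangle_\xi^1=-1$, the commutator carries an extra factor $\kappa^{mb_{ij}}$ compared with the $\kappa$-free setting of Theorem~\ref{th_ope}. Summing the geometric series
\begin{equation*}
\exp\!\Bigl(\sum_{m\ge1}\tfrac{1}{m}\langle \g_i,\g_j\rangle_\xi^{r^m,s^m}\kappa^{mb_{ij}}(z/w)^{-m}\cdot r^{-am}s^{-bm}\Bigr)
\end{equation*}
produces the scalar factor $(z-r^{-a}s^{-b}\kappa^{b_{ij}}w)^{-1}$ (or its inverse, for $Y^-Y^-$; or without an inverse, for the $+-$ cases) in place of the $\kappa=1$ expressions of Theorem~\ref{th_ope}. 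In the diagonal case $\langle\g_i,\g_j\rangle_\xi^1=2$, the factor $\kappa^{mb_{ii}}=\kappa^0=1$ by skew-symmetry of $(b_{ij})$, so those OPEs are formally identical to those of Theorem~\ref{th_ope}; likewise the $\langle\g_i,\g_j\rangle_\xi^1=0$ case is untouched.

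The remaining $\kappa$ contribution comes from the zero-mode part. Using $e^{\g_i}z^{\partial_{\g_i,\kappa}}\cdot e^{\g_j}w^{\partial_{\g_j,\kappa}} = \epsilon(\g_i,\g_j)\,z^{\langle\g_i,\g_j\rangle_\xi^1}\kappa^{-\frac12\langle\g_i,\g_j\rangle_\xi^1 b_{ij}}\,e^{\g_i+\g_j}z^{\partial_{\g_i,\kappa}}w^{\partial_{\g_j,\kappa}}$, the modified $z^{\partial_{\g_i,\kappa}}$ contributes the prefactor $\kappa^{-\frac12 b_{ij}}$ precisely when $\langle\g_i,\g_j\rangle_\xi^1=-1$ (and $\kappa^{-b_{ii}}=1$ when the inner product is $2$, thanks again to $b_{ii}=0$). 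Combining the Heisenberg contraction with the zero-mode contribution and the cocycle $\epsilon(\g_i,\g_j)^{\pm1}$ yields exactly the stated formula in each of the eight cases; the signs and variable shifts ($s^{-b}z$, $r^{-a}w$, etc.) follow as in Theorem~\ref{th_ope} without change since they originate from the definitions (\ref{eq_vo}) and are independent of $\kappa$.

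The main obstacle I expect is purely bookkeeping: ensuring that the skew-symmetry $b_{ij}=-b_{ji}$ produces consistent factors when one swaps the two vertex operators, and verifying that the half-power $\kappa^{-\frac12 b_{ij}}$ coming from $z^{\partial_{\g_i,\kappa}}$ conspires correctly with the full $\kappa^{m b_{ij}}$ from each Heisenberg mode, so that no extraneous $\kappa^{1/2}$'s appear in the final OPE. Once those sign conventions are fixed (and one checks the six negative-charge cases $Y^\pm(-\g_i,-a,-b,z)$ by replacing $(\g_i,a,b)$ with $(-\g_i,-a,-b)$, under which the contractions are invariant), the remaining verifications are formal and parallel to Theorem~\ref{th_ope}.
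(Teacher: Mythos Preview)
Your proposal is correct and follows essentially the same approach the paper has in mind: the paper's entire proof of Theorem~\ref{th_ope1} is the single sentence ``The proof of the following theorem is similar to that of Theorem~\ref{th_ope},'' and what you have written is precisely the elaboration of that standard vertex-operator computation, with the $\kappa$-dependence isolated in the Heisenberg contractions~(\ref{E:heisen1}) and in the modified zero-mode operator $z^{\partial_{\g_i,\kappa}}$. Your identification of the bookkeeping issue (matching the half-power $\kappa^{-\frac12 b_{ij}}$ from the zero mode against the full $\kappa^{mb_{ij}}$ in the mode-by-mode contraction, and using $b_{ii}=0$ to leave the diagonal OPEs unchanged) is exactly the point, and is all the paper expects of the reader.
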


\begin{remark}
Replacing the vertex operators $Y^{\pm}$ by $X^{\pm}$ via the
characteristic map $ch$ we obtain the corresponding results on the
space $\VGC$.
\end{remark}

\subsection{Quantum vertex representations of $U_{r,\,s}(\widehat{\widehat{g}})$}
For each $i=0$, $\dots$, $N$ let
\begin{equation*}
\widetilde{a_i}(n)=\frac{[n]}n a_n(\g_i).
\end{equation*}
It follows from (\ref{E:heisen}) and (\ref{E:heisen1}) that
\begin{equation}\label{E:heisenberg2}
[\widetilde{a_i}(m), \widetilde{a_j}(n)]=\delta_{m,
-n}\frac{(rs)^{\frac{m(1-a_{ij})}{2}}[m\langle \g_i,
\g_j\rangle_{\xi}^1]}m [m].
\end{equation}
According to McKay, the bilinear form $\langle \g_i,
\g_j\rangle_{\xi}^1$ is exactly the same as the invariant form $(\
|\ )$ of the root lattice of the affine Lie algebra $\loopg$. This
implies that the commutation relations (\textrm{T2})
 are exactly the commutation relations (\ref{E:heisenberg2})
 of the Heisenberg
algebra in $U_{r,\,s}(\widehat{\widehat{g}})$ if we identify
$\widetilde{a_i}(n)$ with ${a_i}(n)$. Thus the Fock space $\SGC$ is
a level one representation for the Heisenberg subalgebra in
$U_{r,\,s}(\widehat{\widehat{g}})$.

The following theorem gives a $q$-deformation of the new form of
McKay correspondence in \cite{FJW} and provides a direct connection
from a finite subgroup $\G$ of $SU_2$ to the quantum toroidal
algebra $U_{r,\,s}(\widehat{\widehat{g}})$ of $ADE$ type.

\begin{theo}  \label{T:quantum} Given a finite subgroup $\G$ of $SU_2$,
each of the following correspondence gives a vertex representation
of the quantum toroidal algebra $U_{r,\,s}(\widehat{\widehat{g}})$
on the Fock space $\FGC$:
\begin{align*}
x_i^{+}(n)& \longrightarrow Y^{+}_n(\g_i\otimes s^{b}, \frac{1}{2}, -\frac{1}{2}), \\
x_i^{-}(n)& \longrightarrow Y^{-}_n(\g_i\otimes r^{a}, \frac{1}{2}, -\frac{1}{2}), \\
a_i(m) &\longrightarrow \frac{[m]}m a_m(\g_i), \qquad \hbox{for}
\quad
m>0; \\
a_i(m) &\longrightarrow \frac{-[-m]}m a_m(\g_i), \qquad
\hbox{for}\quad  m<
0;\\
 \gamma &\longrightarrow r, \qquad \gamma'\longrightarrow s;
\end{align*}
or
\begin{align*}
x_i^{+}(n)& \longrightarrow Y^{-}_n(-\g_i\otimes s^{b}, -\frac{1}{2}, \frac{1}{2}), \\
x_i^{-}(n)& \longrightarrow Y^{+}_n(-\g_i\otimes r^{a}, -\frac{1}{2}, \frac{1}{2}), \\
a_i(m) &\longrightarrow \frac{[m]}m a_m(\g_i), \qquad \hbox{for}
\quad
m>0; \\
a_i(m) &\longrightarrow \frac{-[-m]}m a_m(\g_i), \qquad
\hbox{for}\quad  m<
0;\\
 \gamma &\longrightarrow r, \qquad \gamma' \longrightarrow s;
\end{align*}
where $i=0, \dots, N$, and $m\in \mathbb{Z}$.
\end{theo}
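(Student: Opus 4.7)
The plan is to verify each of the defining relations (D1)--(D9) of $U_{r,s}(\widehat{\widehat{\mathfrak g}})$ under the proposed assignment, using the operator product expansions already established in Theorem \ref{th_ope} together with the Heisenberg identification in \eqref{E:heisenberg2}. Since the two correspondences listed in the theorem are exchanged by swapping the roles of $Y^{+}$ and $Y^{-}$ together with $\g_i \leftrightarrow -\g_i$, and since Theorem \ref{th_ope} records the OPEs in both cases in a uniform way, it suffices to treat the first correspondence; the second then follows by symmetry.

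First I would dispose of the easy relations. The centrality of $\gamma^{\pm 1/2}$, $\gamma'^{\pm 1/2}$ and the commutations in (D1), (D3), (D4) follow immediately from the definitions of the grading operators $D$, $D'$ on $\FGC$ induced by the $\mathbb C^{\times}\times\mathbb C^{\times}$-action (via $r$, $s$) and from the fact that $a_m(\g_i)$ commute with $e^{\pm\g_j}$ and with the group-lattice part. Relation (D2) is exactly the content of \eqref{E:heisenberg2}: once we identify $a_i(n) = \widetilde{a_i}(n) = \tfrac{[n]}{n} a_n(\g_i)$ (resp.\ the sign-twisted version for $n<0$), McKay's identification of $\langle \g_i,\g_j\rangle_{\xi}^1$ with the entries of the affine Cartan matrix translates $[m\langle\g_i,\g_j\rangle^1_{\xi}]$ into $A_{ii}^{ma_{ij}/2}-A_{ii}^{-ma_{ij}/2}$ up to the factor $(r-s)$, matching (D2) with $\gamma=r$, $\gamma'=s$. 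Relation (D5) is the standard statement that the lattice operators $e^{\pm\g_i}z^{\pm\partial_{\g_i}}$ carry the eigenvalue $A_{ji}^{\pm 1}$ for the action of $\om_i$ realized as the zero mode of $H_\pm$; this is read off from the exponential prefactors in Theorem \ref{T:vertexop}.

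Next I would verify (D6) by a direct bracket computation: $[a_m(\g_i),\cdot]$ applied to an exponential of linear Heisenberg generators picks out the commutator with the appropriate exponent, and the shift $\gamma^{\pm m/2}$ or $\gamma'^{\pm m/2}$ comes precisely from the asymmetry between the $r^{-k}$ and $s^{-l}$ scalings present in the generating functions $H_\pm(\g,z)$ and $E_\pm(\g,z)$ of Theorem \ref{T:characteristic}. Relations (D7) and (D8) are the heart of the matter. For (D7), the quadratic commutation is equivalent to the OPE case $\langle\g_i,\g_j\rangle_\xi^1 = -1$ of Theorem \ref{th_ope}: the factor $(z - r^{-a}s^{-b}w)^{-1}$ produces, after clearing denominators and using the rationality of the correlation functions, a linear identity in the modes $x_i^{\pm}(k+1), x_j^{\pm}(k')$ matching (D7) with the correct choice $a=\tfrac12$, $b=-\tfrac12$. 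Relation (D8) is obtained by computing the contour residue $\mathrm{Res}_{z=w}$ of the OPE for $Y^+(\g_i,\cdot)Y^-(\g_j,\cdot)$ in the case $\langle\g_i,\g_j\rangle_\xi^1 = 2$, which by the $\delta$-function calculus produces the combination $\gamma'^{-k}\gamma^{-(k+k')/2}\om_i(k+k') - \gamma^{k'}\gamma'^{(k+k')/2}\om_i'(k+k')$, once the exponentials in $H_\pm$, $E_\pm$ are recognized as the generating series defining $\om_i(z)$, $\om_i'(z)$ in the present two-parameter normalization.

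The main obstacle will be the Serre relations (D9). The case (D$9_1$) (when $a_{ij}=0$) is immediate from the $\langle\g_i,\g_j\rangle_\xi^1 = 0$ branch of Theorem \ref{th_ope}, where the OPE is simply multiplicative with a sign $\ep(\g_i,\g_j)$; one reads off $\langle j,i\rangle^{\pm 1}$ from the cocycle $\ep$. The genuinely delicate cases (D$9_2$), (D$9_3$) are the quantum Serre identities. My plan here is to follow the template of \cite{FJ,HZ,Z}: express the symmetrized sum of products $Y^{+}(\g_i,\cdot)^k Y^{+}(\g_j,\cdot) Y^{+}(\g_i,\cdot)^{n-k}$ as a contour integral of a normally-ordered product times a rational function in the formal variables, whose zeros are forced by the $\langle\g_i,\g_i\rangle_\xi^1 = 2$ and $\langle\g_i,\g_j\rangle_\xi^1 = -1$ OPEs (with the two factors $(z - r^{-a}s^{-b}(rs^{-1})^{1/2}w)(z - r^{-a}s^{-b}(r^{-1}s)^{1/2}w)$); the two-parameter $q$-binomial coefficients $\bigl[{1-a_{ij}\atop k}\bigr]_{\pm i}$ then emerge as the combinatorial residues of these zeros. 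The bookkeeping of the two independent parameters $r$, $s$ and the need to distinguish $i<j$ from $i>j$ (which is what the cocycle asymmetry encodes) is where care is required; but once the OPEs are known as in Theorem \ref{th_ope}, the Serre relations reduce to the same rational-function identity proved in \cite{HZ,Z} in the affine setting, with the extra toroidal degree tracked by the scaling parameters $r^a$, $s^b$. Finally, the second correspondence in the theorem is obtained from the first by applying the involution $\g_i \mapsto -\g_i$, $(a,b)\mapsto(-a,-b)$, $Y^{\pm}\leftrightarrow Y^{\mp}$, under which all the OPEs in Theorem \ref{th_ope} are preserved verbatim, so no further work is needed.
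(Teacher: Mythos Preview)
Your proposal is correct and follows essentially the same approach as the paper. In fact the paper gives no formal proof of Theorem~\ref{T:quantum} at all: it relies on the preceding discussion showing that the Heisenberg identification \eqref{E:heisenberg2} reproduces relation (D2)/(T2), and on the operator product expansions of Theorem~\ref{th_ope} (together with the standard correlation-function/contour arguments cited from \cite{FJ, HZ, Z}) to deliver the remaining relations; your plan spells out this implicit argument relation by relation and is in this sense more detailed than what the paper actually records.
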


\begin{remark}  Replacing $Y^{\pm}$ by $X^{\pm}$ in the above theorem,
we obtain a vertex representation of
$U_{r,\,s}(\widehat{\widehat{g}})$ in the space $\VGC$.
\end{remark}

According to McKay, the bilinear form $\langle \g_i,
\g_j\rangle_{\xi}^1$ is exactly the same as the invariant form $(\
|\ )$ of the root lattice of the affine Lie algebra $\loopg$.  Thus
the Fock space $\SGC$ is a level one representation for the
Heisenberg subalgebra in ${\mathcal U}_{r,s}(\widehat{\frak
{sl}_n})$.

Denote by $\SGGC$ the symmetric algebra generated
 by
$a_{-n} (\g_i)$, $n >0$, $i =1, \ldots , N$ over $\mathbb C[r^{\pm
1}, s^{\pm1}]$. $\SGGC$ is isometric to $\RGGC$.

We define
$$
 \FGGC  = \RGGC \otimes \mathbb C [ \Rzz]
 \cong \SGGC\otimes \mathbb C [ \Rzz].
$$
 The space $\VGC$ associated to the lattice
$\Rz$ is isomorphic to the tensor
product of the space $\RGGC$
and $\Rz$ as well as the space associated to
the rank $1$ lattice $\mathbb Z \alpha_0$.

The following theorem gives the new form of McKay correspondence in
\cite{FJW2} for two-parameter case and provides a direct connection
from a finite subgroup $\G$ of $SU_2$ to the two-parameter quantum
affine algebra ${\mathcal U}_{r,s}(\widehat{\frak {sl}_n})$.

\begin{theo} Given a finite subgroup $\G$ of $SU_2$, each of
the following correspondence gives the basic representation of the
two-parameter quantum affine algebra ${\mathcal
U}_{r,s}(\widehat{\frak {sl}_n})$ on the Fock space $\FGGC$:
\begin{align*}
x_i^{+}(n)& \longrightarrow Y^{+}_n(\g_i\otimes s^{b}, \frac{1}{2}, -\frac{1}{2}), \\
x_i^{-}(n)& \longrightarrow Y^{-}_n(\g_i\otimes r^{a}, \frac{1}{2}, -\frac{1}{2}), \\
a_i(m) &\longrightarrow \frac{[m]}m a_m(\g_i), \qquad \hbox{for}
\quad
m>0; \\
a_i(m) &\longrightarrow \frac{-[-m]}m a_m(\g_i), \qquad
\hbox{for}\quad  m<
0;\\
 \gamma &\longrightarrow r, \qquad \gamma'\longrightarrow s;
\end{align*}
or
\begin{align*}
x_i^{+}(n)& \longrightarrow Y^{-}_n(-\g_i\otimes s^{b}, -\frac{1}{2}, \frac{1}{2}), \\
x_i^{-}(n)& \longrightarrow Y^{+}_n(-\g_i\otimes r^{a}, -\frac{1}{2}, \frac{1}{2}), \\
a_i(m) &\longrightarrow \frac{[m]}m a_m(\g_i), \qquad \hbox{for}
\quad
m>0; \\
a_i(m) &\longrightarrow \frac{-[-m]}m a_m(\g_i), \qquad
\hbox{for}\quad  m<
0;\\
 \gamma &\longrightarrow r, \qquad \gamma' \longrightarrow s;
\end{align*}
where $i=1, \dots, N$.
\end{theo}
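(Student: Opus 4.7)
The plan is to verify the defining relations (D1)--(D9) of $U_{r,s}(\widehat{\mathfrak g})$ one block at a time, restricted to indices $1 \leq i \leq N$ (so that the basic representation on $\FGGC$ is obtained by removing the imaginary direction $\gamma_0$ from the toroidal construction of Theorem~\ref{T:quantum}). The central relations (D1) and (D3) hold tautologically because $\gamma\mapsto r$, $\gamma'\mapsto s$ are scalars and the $\omega_i$ act diagonally on $\FGGC$ via $z^{\partial_{\gamma_i}}$. For the Heisenberg block (D2), I would invoke the commutation relation (\ref{E:heisen1}) coming from Proposition~\ref{prop_orth} applied to $\xi = \gamma_0\otimes((rs^{-1})^{1/2}+(r^{-1}s)^{1/2})-\pi\otimes 1_{\Cs}$, together with the rescaling $a_i(n)=\frac{[n]}{n}a_n(\gamma_i)$; a direct substitution then matches the right-hand side of (D2), using that $\langle\gamma_i,\gamma_j\rangle_\xi^1 = a_{ij}^1$ is precisely the entry of the affine Cartan matrix, so that $A_{ii}^{ma_{ij}/2}-A_{ii}^{-ma_{ij}/2}$ reproduces the $r,s$-numerator on the right.

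For the action relations (D4)--(D6), the degree operators $D, D'$ are implemented by the natural $\mathbb Z_+$-grading on $\FGGC$ scaled appropriately in $r$ and $s$, and the commutation between $a_i(m)$ and $x_j^\pm(k)$ reduces (via the generating series) to contractions of Heisenberg modes with the exponential halves of $Y^\pm$; these follow from the standard calculation $[a_n(\gamma_i), e^{\sum \frac{1}{m}a_{-m}(\gamma_j)z^m}] = \langle\gamma_i,\gamma_j\rangle^{r^n,s^n}_\xi z^n e^{\cdots}$, which after the $[n]/n$ rescaling produces precisely the $(rs^{-1})^{ma_{ij}/2}-(rs^{-1})^{-ma_{ij}/2}$ factor of (D$6_{1,2}$). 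Relation (D5) then follows by exponentiating.

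The heart of the proof is the verification of the quadratic relation (D7) and the Drinfeld cross bracket (D8). For (D7) I would take Theorem~\ref{th_ope} and specialize $(a,b)=(\tfrac12,-\tfrac12)$: comparing $Y^+(\gamma_i,\tfrac12,-\tfrac12,s^{1/2}z)Y^+(\gamma_j,\tfrac12,-\tfrac12,s^{1/2}w)$ with the swapped product yields, after clearing the appropriate product of shifted $(z-\alpha w)$ factors, exactly the bilinear functional equation $(z-A_{ji} w)x_i^+(z)x_j^+(w)= -(A_{ji}A_{ij}^{-1})^{1/2}(A_{ij}w - z)x_j^+(w)x_i^+(z)$ of (D7) through the cocycle identity $\epsilon(\gamma_i,\gamma_j)\epsilon(\gamma_j,\gamma_i)^{-1}=\pm(r_i s_i)^{\pm a_{ij}/2}$ built into the chosen 2-cocycle. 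For (D8) one takes the OPEs of $Y^+(\gamma_i)Y^-(\gamma_j)$ and $Y^-(\gamma_j)Y^+(\gamma_i)$ from Theorem~\ref{th_ope}; their difference localizes on $z=r^{\pm 1}s^{\mp 1}w$ by the delta function identity, and the residues on the two poles reproduce $\omega_i(k+k')$ and $\omega_i'(k+k')$ once we use the explicit formulas $H_\pm(\gamma_i,z)=\omega_i^{\pm 1}\exp(\pm(r-s)\sum a_i(\pm\ell)z^{\mp\ell})$ implicit in (D8).

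The Serre relations (D$9_{1,2,3}$) form the main obstacle, as they require nontrivial antisymmetrization. My approach is to reduce (D$9_1$) (the case $a_{ij}=0$) to a direct consequence of Theorem~\ref{th_ope} in the regime $\langle\gamma_i,\gamma_j\rangle^1_\xi=0$, where the product $Y^\pm(\gamma_i,z)Y^\pm(\gamma_j,w)$ is already symmetric up to the cocycle factor $\langle j,i\rangle^{\pm 1}$. For (D$9_2$) and (D$9_3$) with $a_{ij}=-1$, I would expand the symmetrized sum into vertex operator products using the normal ordered forms, and observe that the combinatorial identity $\sum_{k=0}^2(-1)^k(r_is_i)^{\pm k(k-1)/2}\binom{2}{k}_{\pm i}=0$ on power series coefficients (the classical $q$-Serre cancellation in its $r,s$ form, proved in \cite{HRZ, Z}) produces total cancellation. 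I expect the delicate point to be tracking the asymmetric cocycle $\epsilon$ and the shifts in $r^{-a}, s^{-b}$ through the symmetrization; once the correct normalization is pinned down, the argument parallels the one-parameter case of \cite{FJW2} and the two-parameter algebraic proof of \cite{HZ, Z}, which I would invoke directly to close the verification.
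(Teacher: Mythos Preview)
Your proposal is correct and follows essentially the same route as the paper. The paper itself offers no detailed proof of this theorem: it is presented as an immediate restriction of Theorem~\ref{T:quantum} (the toroidal case) to indices $1\le i\le N$ and to the sublattice $\Rzz$, with the verification of the Drinfeld relations left implicit in the OPE identities of Theorem~\ref{th_ope}, the Heisenberg commutation (\ref{E:heisenberg2}), and the references \cite{FJW2, HZ, Z}. Your block-by-block verification of (D1)--(D9) is exactly the computation one would carry out to fill in those implicit steps, and your reduction of the Serre relations to the two-parameter $q$-Serre identity of \cite{HRZ, Z} is the same closure the paper relies on.
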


\begin{remark}  Replacing $Y^{\pm}$ by $X^{\pm}$ in the above theorem,
we obtain a vertex representation of ${\mathcal
U}_{r,s}(\widehat{\frak {sl}_n})$ in the space $\VGC$.
\end{remark}

\vskip30pt \centerline{\bf ACKNOWLEDGMENT}

\bigskip

N. Jing would like to thank the support of NSA grant and NSFC Grant
(No. 10728102). H. Zhang would like to thank the support of NSFC
(No. 10801094) and Shanghai Leading Academic Discipline Project (No.
J50101).

\bigskip

\bibliographystyle{amsalpha}

\end{document}